\newtheorem{theo}{Theorem}[section]
\newtheorem{prop}[theo]{Proposition}
\newtheorem{lemm}[theo]{Lemma}
\newtheorem{coro}[theo]{Corollary}
\newtheorem{rema}[theo]{Remark}
\newtheorem{Defi}[theo]{Definition}
\newtheorem{conj}[theo]{Conjecture}
\title{Remarks and questions on coisotropic subvarieties and $0$-cycles of hyper-K\"ahler varieties}
\author{Claire Voisin
\\CNRS, Institut de Math\'ematiques de Jussieu and IAS \footnote{This research has been   supported by  The Charles Simonyi Fund and The Fernholz Foundation}}
\date{}
\newfont{\gothic}{eufb10}
\begin{document}
\maketitle
\setcounter{section}{-1}

\section{Introduction}

This paper proposes a conjectural picture for the structure of the Chow ring  ${\rm CH}^*(X)$
 of a (projective) hyper-K\"ahler variety $X$, that seems to emerge
 from the recent papers \cite{chapa}, \cite{shenvial}, \cite{vial}, \cite{voisinlaz}, with emphasis on the Chow group ${\rm CH}_0(X)$
of $0$-cycles (in this paper,  Chow groups will be taken with
$\mathbb{Q}$-coefficients). Our motivation is Beauville's conjecture
(see \cite{beausplit}) that for such an $X$, the Bloch-Beilinson
filtration has a natural, multiplicative, splitting. This statement
is hard to make precise since the Bloch-Beilinson filtration is not
known to exist, but for $0$-cycles, this means that $${\rm
CH}_0(X)=\oplus {\rm CH}_0(X)_i,$$ where the decomposition is given
by the action of self-correspondences $\Gamma_i$ of $X$, and where
the group ${\rm CH}_0(X)_i$ depends only on $(i,0)$-forms on $X$
(the correspondence $\Gamma_i$ should act as $0$ on $H^{j,0}$ for
$j\not=i$, and $Id$ for $i=j$). We refer to the paragraph \ref{secBB}
at the end of this introduction
for the axioms of the Bloch-Beilinson filtration
and  we will refer to it  in Section \ref{sec3}
when providing some evidence for our conjectures.
 Note that a hyper-K\"ahler variety
$X$ has $H^{i,0}(X)=0$ for odd $i$, so
 the Bloch-Beilinson filtration  $F_{BB}$ has to satisfy
$F_{BB}^{i}{\rm CH}_0(X)=F^{i+1}_{BB}{\rm CH}_0(X)$ when $i$ is odd.
Hence we are only interested in the $F^{2i}_{BB}$-levels, which we
denote by ${F'}^i_{BB}$.
Note also that there are concrete consequences of the Beauville conjecture that can be attacked
directly, namely, the $0$-th piece ${\rm CH}(X)_0$ should map isomorphically via the cycle class map
to its image in $H^*(X,\mathbb{Q})$ which should be
the subalgebra $H^*(X,\mathbb{Q})_{alg}$ of algebraic cycle classes, since the Bloch-Beilinson
filtration is conjectured to have $F^1_{BB}{\rm CH}^*(X)={\rm CH}^*(X)_{hom}$.
Hence there should be a subalgebra of ${\rm CH}^*(X,\mathbb{Q})$ which
is isomorphic to the subalgebra $H^{2*}(X,\mathbb{Q})_{alg}\subset
H^{2*}(X,\mathbb{Q}) $ of algebraic classes. Furthermore,
this subalgebra has to contain
${\rm NS}(X)={\rm Pic}(X)$. Thus a concrete subconjecture is the following
prediction (cf. \cite{beausplit}):
\begin{conj} \label{conjBV} (Beauville) Let $X$ be a projective hyper-K\"ahler manifold. Then
the cycle class map is injective on the subalgebra of ${\rm CH}^*(X)$
 generated by divisors.
\end{conj}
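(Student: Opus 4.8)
The plan is to show that every homogeneous polynomial relation among divisor classes that holds in $H^{*}(X,\mathbb{Q})$ already holds in $\mathrm{CH}^{*}(X)$. Write $\dim X=2m$, let $q$ be the Beauville--Bogomolov--Fujiki form, and let $R^{*}\subseteq\mathrm{CH}^{*}(X)$ be the subalgebra generated by $\mathrm{Pic}(X)_{\mathbb{Q}}$. A first, purely cohomological reduction isolates the content. By the theorem of Bogomolov and Verbitsky, the map $\mathrm{Sym}^{k}H^{2}(X,\mathbb{Q})\to H^{2k}(X,\mathbb{Q})$ is injective for $k\le m$; since $\mathrm{NS}(X)_{\mathbb{Q}}\hookrightarrow H^{2}(X,\mathbb{Q})$, a homologically trivial element of $R^{k}$ with $k\le m$ lifts to a symmetric polynomial in $\mathrm{NS}(X)_{\mathbb{Q}}$ that is already zero, so there is nothing to prove in codimension $\le m$. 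In the range $m<k<2m$ (where, by Verbitsky's description of the subalgebra of $H^{*}(X,\mathbb{Q})$ generated by $H^{2}$ together with the Fujiki relation $\int_{X}\alpha^{2m}=c_{X}\,q(\alpha)^{m}$, the cohomological relations among divisors are generated by the classes $\alpha^{m+1}$ with $\alpha$ isotropic) one multiplies a homologically trivial $\xi\in R^{k}$ by a power $h^{2m-k}$ of an ample class to land in $R^{2m}\subseteq\mathrm{CH}_{0}(X)$; a decomposition-of-the-diagonal argument establishing a Lefschetz property on the submotive of $X$ cut out by divisor classes (as carried out for Hilbert schemes of two points on K3 surfaces in \cite{shenvial}) then reduces the vanishing of $\xi$ to that of $\xi\cdot h^{2m-k}$. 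Hence everything reduces to the top-degree statement $(\star)$: there is $o_{X}\in\mathrm{CH}_{0}(X)$ of degree $1$ with $D_{1}\cdots D_{2m}=\deg(D_{1}\cdots D_{2m})\,o_{X}$ in $\mathrm{CH}_{0}(X)$ for all $D_{i}\in\mathrm{Pic}(X)$.

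To attack $(\star)$ I would follow the template of Beauville and Voisin for K3 and abelian surfaces, extended in \cite{voisinlaz} to $S^{[n]}$ and in \cite{shenvial} to fourfolds of $K3$ type. By multilinearity and writing each $D_{i}$ as a difference of very ample divisors one reduces to $D_{i}$ very ample, and then to general smooth members $D_{i}\in|D_{i}|$; the $0$-cycle $\bigcap_{i}D_{i}$ then has a class in $\mathrm{CH}_{0}(X)$ independent of the chosen members, each $|D_{i}|$ being irreducible. The task is to choose, or to move, the members so that $\bigcap_{i}D_{i}$ is supported on a subvariety with trivial $\mathrm{CH}_{0}$. For the canonical cycle one takes $o_{X}$ to be the class of a point on a rational curve in $X$, or more robustly the constant class carried by a constant-cycle subvariety (conjecturally a Lagrangian $m$-fold) in the sense of Huybrechts. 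Two mechanisms feed the proportionality. First, if some $D_{i}$ is nef and isotropic, $q(D_{i})=0$, then by the SYZ expectation $D_{i}$ is semiample and induces a Lagrangian fibration $f\colon X\to B$ with $\dim B=m$, so $D_{i}^{m+1}=f^{*}(H^{m+1})=0$ already in $\mathrm{CH}^{m+1}(X)$ because $H^{m+1}=0$ on the $m$-fold $B$; since these isotropic relations generate, this would yield $(\star)$ as soon as enough isotropic classes are at hand. Second, and more generally, a very ample divisor $D\subset X$ is coisotropic of codimension one and its characteristic foliation has $1$-dimensional leaves; the expectation that the leaves of the characteristic foliation of a coisotropic subvariety are constant-cycle — the circle of questions this paper is about — would let one slide the point $\bigcap_{i}D_{i}$ along characteristic curves of the successive $D_{i}$ onto a common constant-cycle Lagrangian subvariety, giving $(\star)$.

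To pass from these favourable configurations to an arbitrary projective $X$ I would deform: a divisor class extends over the (smooth) locus of the universal deformation along which it stays of type $(1,1)$, a divisor monomial therefore spreads to a cycle on the total space of such a family restricting fibrewise to it, and the cohomological relations among the extended classes are locally constant, their coefficients being universal polynomials in the locally constant $q(D_{i},D_{j})$; by Verbitsky's big-monodromy theorem every deformation class contains members carrying isotropic divisor classes, so — granting the mechanisms above on those members — one obtains rational equivalences there, and the density of the monodromy orbits transports them back to $X$. The main obstacle, and the reason Conjecture~\ref{conjBV} remains open, is precisely the geometric heart of $(\star)$: outside the deformation classes whose Chow ring is known by hand, one does not know that very ample divisors degenerate or slide onto constant-cycle subvarieties; the first mechanism is hostage to the open SYZ conjecture, and the second rests on the constant-cycle property of characteristic foliations of coisotropic subvarieties. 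Establishing that property would, together with the reductions above, essentially settle the conjecture.
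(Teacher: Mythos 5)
The statement you were asked about is Conjecture \ref{conjBV}: it is Beauville's conjecture, stated in the paper as an \emph{open} problem, and the paper offers no proof of it (it only records evidence elsewhere, e.g.\ the case of Fano varieties of lines of cubic fourfolds via \cite{voisinPAMQ}). Your text, to its credit, says so at the end: what you wrote is a reduction strategy, explicitly conditional on the SYZ conjecture, on a Chow-level Lefschetz property for the submotive generated by divisors, and on a constant-cycle property of characteristic foliations. So it is not a proof, and it could not be compared to a proof in the paper, because none exists. The part that is genuinely fine is the range $k\le n$: since ${\rm CH}^1(X)={\rm NS}(X)_{\mathbb{Q}}$ and ${\rm Sym}^k H^2(X,\mathbb{Q})\to H^{2k}(X,\mathbb{Q})$ is injective for $k\le n$ by Verbitsky, injectivity of the cycle class map on the degree-$k$ part of the divisor subalgebra is immediate there; but that was never the difficulty.

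Two of your steps would moreover fail as stated, beyond the acknowledged reliance on open conjectures. First, your ``second mechanism'': a smooth very ample divisor $D$ in a hyper-K\"ahler manifold of dimension $2n\ge 4$ has ample canonical bundle, hence is not uniruled, and by Amerik--Campana \cite{amca} (quoted in the paper right after Definition \ref{deficoisoalg}) its characteristic foliation is then \emph{not} algebraically integrable; the leaves are not algebraic curves, so they cannot be constant-cycle subvarieties and there is no ``sliding $\bigcap_i D_i$ along characteristic curves''. The paper's expectation (Theorem \ref{propcoiso}, Conjecture \ref{conjgen}) concerns special coisotropic subvarieties arising as codimension-$i$ components of $S_iX$, not arbitrary ample divisors, so attributing the constant-cycle property of characteristic leaves of all coisotropic subvarieties to ``the circle of questions this paper is about'' misreads it. Second, the deformation step: rational equivalences established on special members of the deformation family cannot be ``transported back'' to $X$ by monodromy density. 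The specialization map on Chow groups goes from the generic fibre to a special fibre, not the other way, and the locus of fibres over which a given spread-out cycle becomes rationally trivial is only a countable union of closed subsets of the base; density of such fibres (e.g.\ those carrying isotropic classes) implies nothing about the fibre you started from. This is exactly why the known cases are handled by specific geometry ($S^{[n]}$, Fano varieties of lines, the LLSS eightfolds) rather than by deformation from special members, and why the conjecture is still open.
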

 This conjecture has been enlarged in \cite{voisinPAMQ} to include
 the Chow-theoretic Chern classes of $X$, $c_i(X):=c_i(T_X)$ which should thus be thought
 as being  contained in the $0$-th piece of the conjectural
 Beauville decomposition. Our purpose in this paper is to introduce a new set
 of classes which should also be put in this $0$-th piece, for example, the constant cycles
 subvarieties of maximal dimension (namely $n$, with ${\rm dim}\,X=2n$, because they
 have to be isotropic, see Section \ref{sec2}) and their
 higher dimensional generalization, which are algebraically coisotropic.
 Let us explain the motivation for this, starting from
 the study of $0$-cycles.

Based on the case of $S^{[n]}$ where we have the results of \cite{beauvoisin},
\cite{ogrady}, \cite{voisinlaz}
that concern the ${\rm CH}_0$ group of a $K3$ surface
but will be reinterpreted in a slightly different form in Section
\ref{sec1},
we introduce the following decreasing filtration $S_\cdot$ on
${\rm CH}_0(X)$ for any hyper-K\"ahler manifold (the definition can in fact be introduced
for any algebraic variety but we think it is interesting only in the hyper-K\"ahler case).

First of all, recall that the orbit $O_x$ of a point $x\in X$ under rational equivalence, defined
as
$$O_x=\{y\in X,\,y\equiv_{rat} x\,\,{\rm in}\,\,X\},$$
is a countable union of closed algebraic subsets of $X$. It thus has a dimension, which is the
supremum of the dimensions of the closed algebraic subsets of $X$ appearing
in this union. If ${\rm dim}\,O_x=i$, there exists a subvariety $Z\subset
X$ of dimension $i$, such that all points of $Z$ are rationally equivalent
to $x$ in $X$. The variety $Z$ is by definition   a ``constant cycle subvariety'' in the sense of Huybrechts
\cite{huy}.

\begin{Defi} \label{defifilt} We define
  $S_iX\subset X$ to be the set of points in
$X$ whose orbit under rational equivalence has dimension $\geq i$.

The filtration $S_\cdot$ is then defined  by letting $S_i{\rm CH}_0(X)$ be the
subgroup of ${\rm CH}_0(X)$ generated by classes of points $x\in S_iX$.
\end{Defi}
 The set $S_iX\subset X$ is  a
countable union
of closed algebraic subsets of $X$ and by definition
$S_i{\rm CH}_0(X)$ is the $\mathbb{Q}$-vector subspace
of ${\rm CH}_0(X)$  generated by the points in $S_iX$.

Let us explain the relationship between this definition and the one introduced  by O'Grady
in \cite{ogrady}. O'Grady introduces a decreasing filtration
$S_{OG}$ on the ${\rm CH}_0$ group of a $K3$ surface $S$, defined (up to a shift
of indices) by
\begin{eqnarray}
\label{eqfiltogk31}
S_{OG,i}{\rm CH}_0(S)_d=\{z\in {\rm CH}_0(S)_d,\,z\equiv_S z'+i\,o_S,\,z'\in S^{(d-i)}\}.
\end{eqnarray}
Here, ${\rm CH}_0(S)_d$ is the set of $0$-cycles of degree $d$
modulo rational equivalence on $S$, and $o_S\in {\rm CH}_0(S)_1$ is
the ``Beauville-Voisin'' canonical $0$-cycle of $S$ introduced in
\cite{beauvoisin}. The symbol $\equiv_S$ means ``rationally
equivalent in $S$''. A variant of this definition where we replace
rational equivalence
 in $S$ by rational equivalence in $S^{[n]}$ (or equivalently $S^{(n)}$) provides a filtration
$N_\cdot$ on ${\rm CH}_0(S^{[n]})$, namely
\begin{eqnarray}\label{eqfiltog}
N_i{\rm CH}_0(S^{[n]})={\rm Im}\,({(io_S)}_*:{\rm CH}_0(S^{[n-i]})\rightarrow
{\rm CH}_0(S^{[n]})).
\end{eqnarray}
This filtration exists for any surface $S$ equipped with a
base-point. It depends however on the choice of the  point, or at
least of the rational equivalence class of the point. Here,
the only specificity of $K3$ surfaces is thus the fact that there is a
canonically defined rational equivalence class of a point, namely
the  canonical zero-cycle $o_S$. It is proved in \cite[Theorem
1.4]{voisinlaz} that an equivalent definition of O'Grady's
filtration (\ref{eqfiltogk31}) on ${\rm CH}_0(S)$ can be given as
follows (here we are assuming $i\geq 0$ and the assumption ``${\rm
dim}\geq 0$'' means in particular ``non-empty''):
\begin{eqnarray} \label{eqfiltogK3} S_{OG,i}{\rm CH}_0(S)_d=\{z\in {\rm CH}_0(S)_d,\,{\rm dim}\,O_z^S\geq i\}.
\end{eqnarray}
Here $O_z^S\subset S^{(d)}$ is the orbit of $z$ for  rational equivalence in $S$, that is
$$O_z^S=\{z'\in S^{(d)},\,z'\equiv_S z\}.$$
Note that this is different from the orbit $O_z$ of
$z$ as a point of $S^{(n)}$ or $S^{[n]}$ for {\it rational equivalence in} $S^{[n]}$.
One has however the obvious inclusion $O_z\subset O_z^S$ which will be exploited in this paper.
As we will prove in Section
\ref{sec1},  the  main result in \cite{voisinlaz}
also implies
\begin{theo} \label{theointrofiltK3}(Cf. Theorem \ref{theofiltcomp}) The filtration $S_\cdot$ introduced in Definition
\ref{defifilt} coincides with
the filtration $N_\cdot$ of (\ref{eqfiltog}) on $0$-cycles on $S^{[n]}$, when
$S$ is a $K3$ surface and $o_S$ is a point representing the canonical $0$-cycle of $S$.
\end{theo}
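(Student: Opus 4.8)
The plan is to prove the two inclusions $N_i{\rm CH}_0(S^{[n]})\subseteq S_i{\rm CH}_0(S^{[n]})$ and $S_i{\rm CH}_0(S^{[n]})\subseteq N_i{\rm CH}_0(S^{[n]})$ separately. Throughout I will work with $S^{(n)}$ instead of $S^{[n]}$: the Hilbert--Chow morphism induces an isomorphism ${\rm CH}_0(S^{[n]})\cong {\rm CH}_0(S^{(n)})$ (with $\mathbb{Q}$-coefficients), and it identifies the two versions of the filtration $S_\cdot$, since a subvariety of $S^{[n]}$ is a constant cycle subvariety if and only if its image in $S^{(n)}$ is one (the orbits of corresponding points being the same set under this isomorphism). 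A point of $S^{(n)}$ will be viewed as an effective $0$-cycle of degree $n$ on $S$, and $o\in S$ will denote a fixed point representing the canonical class $o_S$.

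For $N_i\subseteq S_i$: since ${\rm CH}_0(S^{[n-i]})={\rm CH}_0(S^{(n-i)})$ is spanned by classes of points, $N_i{\rm CH}_0(S^{[n]})$ is spanned by the classes $[D+io]$ with $D\in S^{(n-i)}$ effective of degree $n-i$. I fix a rational curve $R\subset S$ containing $o$, which exists because $o_S$ is by definition, see \cite{beauvoisin}, the class of a point lying on a rational curve of $S$. All points of $R$ are rationally equivalent to $o$ already on $R$, hence also as points of $S$, and sliding the $i$ marked copies of $o$ one at a time along $R$ produces, for every $t\in R^{(i)}$, a chain of maps $\mathbb{P}^1\to S^{(n)}$ showing that $[D+t]=[D+io]$ in ${\rm CH}_0(S^{(n)})$. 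Since effective $0$-cycles on $S$ form a free commutative monoid, the map $t\mapsto D+t$ is injective, so the image of $R^{(i)}$ under it is an $i$-dimensional subvariety of $S^{(n)}$ all of whose points are rationally equivalent to $D+io$ in $S^{(n)}$. Hence $D+io\in S_i(S^{(n)})$ and $[D+io]\in S_i{\rm CH}_0(S^{(n)})$, which gives the inclusion.

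For $S_i\subseteq N_i$: it suffices to show that $[z]\in N_i{\rm CH}_0(S^{(n)})$ whenever $z\in S^{(n)}$ is a point with $\dim O_z\geq i$, where $O_z$ is the orbit of $z$ for rational equivalence in $S^{(n)}$. First, any morphism $\mathbb{P}^1\to S^{(n)}$ yields, via the universal degree-$n$ $0$-cycle, a family of $0$-cycles on $S$ over $\mathbb{P}^1$, so two points of $S^{(n)}$ rationally equivalent in $S^{(n)}$ have rationally equivalent associated $0$-cycles on $S$; this is the inclusion $O_z\subset O_z^S$ recalled in the introduction, and it yields $\dim O_z^S\geq i$. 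By \cite[Theorem 1.4]{voisinlaz}, i.e.\ by the identification of the descriptions (\ref{eqfiltogk31}) and (\ref{eqfiltogK3}) of O'Grady's filtration on ${\rm CH}_0(S)$, this means $z\equiv_S z'+io_S$ for some effective $z'\in S^{(n-i)}$. It then remains to promote this rational equivalence on $S$ to the equality $[z]=[z'+io]$ in ${\rm CH}_0(S^{(n)})$, which exhibits $[z]$ as an element of $N_i{\rm CH}_0(S^{(n)})$ and finishes the argument.

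I expect this last promotion to be the main obstacle. A rational equivalence between two effective $0$-cycles of degree $n$ on $S$ need not, a priori, be realizable by a chain of effective $0$-cycles of degree $n$: one may be forced to pass through cycles of larger degree, so an equality in ${\rm CH}_0(S^{(n)})$ does not follow formally from one in ${\rm CH}_0(S)$. The way around this is to use not only the statement but the proof of the main result of \cite{voisinlaz}: the relation $z\equiv_S z'+io_S$ is obtained there by repeatedly moving points of $z$ onto rational curves through representatives of $o_S$, each such move being the translate of a single point of an effective degree-$n$ cycle along a rational curve of $S$, hence parametrized by a $\mathbb{P}^1$ mapping to $S^{(n)}$ and keeping the total degree equal to $n$. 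Tracking the whole chain of moves inside $S^{(n)}$ shows that $z$ and $z'+io$ are rationally equivalent as points of $S^{(n)}$, so that $[z]=[z'+io]\in N_i{\rm CH}_0(S^{(n)})={\rm CH}_0(S^{[n]})$, as required.
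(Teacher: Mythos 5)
Your first inclusion ($N_i\subseteq S_i$) is essentially the paper's ``reverse inclusion'' and is fine, up to a small slip: the chosen representative $o$ of $o_S$ need not itself lie on a rational curve (any point of a rational curve represents $o_S$, not conversely), but this is easily repaired by sliding along a constant cycle curve $C$ through some point $o'\equiv_S o$ and then pushing the equivalence $o'\equiv_S o$ forward under $y\mapsto y+(\text{fixed cycle})$.

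The second inclusion, however, has a genuine gap exactly where you suspected it. From $\dim O_z\geq i$ you pass to $\dim O_z^S\geq i$ and invoke only \cite[Theorem 1.4]{voisinlaz}, a statement purely about ${\rm CH}_0(S)$, to get $z\equiv_S z'+io_S$; the remaining ``promotion'' to $[z]=[z'+io]$ in ${\rm CH}_0(S^{(n)})$ is precisely the difference between the orbits $O_z$ and $O_z^S$ that the paper warns about, and it does not follow from the statement you quote. Concretely, by Proposition \ref{prodecompchow} the class of a point $x_1+\cdots+x_n$ in ${\rm CH}_0(S^{(n)})$ is governed by all the symmetric $*$-products of the degree-zero classes $x_j-o$, whereas $z\equiv_S z'+io_S$ only constrains their sum in ${\rm CH}_0(S)$; so the desired equality (or even the weaker membership $[z]\in N_i$) is not a formal consequence. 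Your proposed fix---that the proof in \cite{voisinlaz} produces the relation by moving one point at a time along rational curves within effective cycles of degree $n$, so that the whole chain lives in $S^{(n)}$---is an unverified description of that proof, and if such an elementary effective-chain argument worked it would essentially trivialize the main theorem of \cite{voisinlaz}, which it does not. What the paper actually uses is the stronger geometric statement (Theorem 2.1 and Variant 2.4 of \cite{voisinlaz}): an $i$-dimensional subvariety $W_x\subset S^{[n]}$ parametrizing cycles constant in ${\rm CH}_0(S)$ must \emph{meet} $C^{(i)}+S^{(n-i)}$ for some constant cycle curve $C$. Applying this to a positive-dimensional piece $W_x$ of the orbit $O_x$, the intersection point is rationally equivalent to $x$ \emph{in} $S^{(n)}$ because it lies in the orbit, and it is rationally equivalent in $S^{(n)}$ to a point of the form $io_S+z_2$ by the easy sliding argument along $C$; no promotion of an equivalence from ${\rm CH}_0(S)$ to ${\rm CH}_0(S^{(n)})$ is ever needed. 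Your argument, by discarding the hypothesis that the $i$-dimensional family lies in the orbit for rational equivalence in $S^{(n)}$ and retaining only its consequence in ${\rm CH}_0(S)$, loses exactly the input that makes the conclusion reachable.
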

On the other hand, for any surface $S$ and choice of point $o_S$, the  filtration
$N_\cdot$
 provides a splitting of the Bloch-Beilinson filtration
on ${\rm CH}_0(S^{[n]})$ (see Proposition \ref{prodecompchow} in Section \ref{sec1}).
Hence, when $S$ is a $K3$ surface and $o_S$ is the canonical $0$-cycle,
the filtration $S_\cdot$  provides a splitting of the Bloch-Beilinson filtration
on ${\rm CH}_0(S^{[n]})$.
Our  hope and guiding idea in this paper is that this is the general situation
for hyper-K\"ahler manifolds.
A first concrete  conjecture in this direction is the following:
\begin{conj}\label{conjmain}  Let $X$ be projective hyper-K\"ahler manifold of dimension
$2n$. Then for any nonnegative integer $i\leq n$,
the set
$$S_iX:=\{x\in X,\,{\rm dim}\,O_x\geq i\}$$
has dimension $2n-i$.
\end{conj}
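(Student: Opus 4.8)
\medskip
\noindent\textbf{Proof strategy.} The conjecture is the conjunction of two inequalities, $\dim S_iX\le 2n-i$ and $\dim S_iX\ge 2n-i$, which need rather different inputs. The case $i=0$ is trivial, $S_0X=X$. The one general structural fact at our disposal is that every constant cycle subvariety $Z\subset X$ is isotropic for the holomorphic symplectic form $\sigma$ (Section \ref{sec2}), so $\dim Z\le n$; this is why the interesting range is $0\le i\le n$.

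\emph{Upper bound.} Write $S_iX=\bigcup_kW_k$ as a countable union of irreducible closed algebraic subsets; it suffices to bound each $\dim W_k$. For a general point $x\in W_k$ there is, by definition of $S_iX$, an irreducible constant cycle subvariety $Z$ with $x\in Z$ and $\dim Z\ge i$; as $Z\subset S_iX$ and $x$ is general, $Z\subset W_k$. By the isotropy of $Z$, the tangent space $T_xZ\subset T_xW_k$ is an isotropic subspace of dimension $\ge i$ for the (possibly degenerate) form $\sigma|_{W_k}$. This only forces ${\rm rk}(\sigma|_{W_k})\le 2(\dim W_k-i)$, equivalently $\sigma^{\dim W_k-i+1}|_{W_k}=0$ --- a condition which is vacuous once $\dim W_k\ge 2i$, and which is not improved by spreading the subvarieties $Z$ out into a family $\mathcal Z\to B$ over an irreducible base, with evaluation map $ev\colon\mathcal Z\to X$ dominating $W_k$, and applying the spreading-out argument of Mumford and Bloch--Srinivas fibrewise (this only recovers $ev^*\sigma|_{Z_b}=0$, i.e. the same isotropy). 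The heart of the conjecture is thus to rule out the existence of large components $W_k$, and I know of no a priori mechanism that does so in general; this is the step I expect to be the main obstacle. At present the upper bound is known only for $X=S^{[n]}$, via the identification of $S_\cdot$ with the filtration $N_\cdot$ of (\ref{eqfiltog}) (Theorem \ref{theointrofiltK3}) and the results of \cite{voisinlaz} on $0$-cycles of $K3$ surfaces.

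\emph{Lower bound.} One must exhibit, through the general point of a $(2n-i)$-dimensional subvariety, a constant cycle subvariety of dimension $i$. The natural mechanism is an \emph{algebraically coisotropic} subvariety $W\subset X$ of dimension $2n-i$ --- so $\sigma|_W$ has constant rank $2(n-i)$ and the leaves of its characteristic foliation are algebraic subvarieties of dimension $i$ --- all of whose leaves are constant cycle subvarieties: then a general point of $W$ has orbit of dimension $\ge i$, so $\dim S_iX\ge\dim W=2n-i$. At the two ends of the range such $W$ is available: for $i=1$, a uniruled divisor $W\subset X$, swept out by a family of rational (hence constant cycle) curves; for $i=n$, a constant cycle Lagrangian, for instance $R^{[n]}\cong\mathbb{P}^n\subset S^{[n]}$ for $R\subset S$ a rational curve, a $\mathbb{P}^n$ being automatically a constant cycle subvariety and isotropic of dimension $n$. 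For $0<i<n$ the existence of the required algebraically coisotropic subvarieties is itself open in general, being part of the conjectural picture of this paper; this is the second source of difficulty. For $X=S^{[n]}$ the whole conjecture follows from Theorem \ref{theointrofiltK3} and \cite{voisinlaz}, and in general it would follow from the conjectural multiplicative splitting of the Bloch--Beilinson filtration discussed in Section \ref{sec3}.
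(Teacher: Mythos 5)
This statement is one of the paper's conjectures (Conjecture \ref{conjmain}); the paper contains no proof of it, only partial evidence, and your write-up correctly refrains from claiming one. So there is no "paper proof" to match, and as a status report your text is broadly in the spirit of the paper. However, it misstates what is actually known, and the misstatement concerns precisely the mechanism the paper is built on. You assert that the upper bound $\dim S_iX\le 2n-i$ is the main obstacle and is "known only for $S^{[n]}$", because, as you argue, spreading the constant cycle subvarieties into a family and applying Mumford's theorem "fibrewise" only recovers the isotropy of each leaf, hence the vacuous bound ${\rm rk}(\sigma|_{W})\le 2(\dim W-i)$. This is exactly the point where the paper does better: in Theorem \ref{propcoiso} the Mumford argument (Lemma \ref{propmum}) is applied to the \emph{whole family} $p:K\to Z'$ with evaluation $f:K\to X$, using that $f(k)$ is rationally equivalent to $\alpha\circ p(k)$, which yields the identity of forms $f^*\sigma=p^*(\alpha^*\sigma)$ on $K$. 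This says much more than leafwise isotropy: a vector tangent to a fiber of $p$ pairs to zero under $f^*\sigma$ with \emph{every} tangent vector of $K$, so the $i$-dimensional leaves lie in the kernel of $\sigma$ restricted to the swept-out locus, not merely in an isotropic subspace of it. Consequently ${\rm rk}(\sigma|_{W})\le \dim W-i$ for any component $W$ of $S_iX$, and combined with the general inequality ${\rm rk}(\sigma|_{W})\ge 2\dim W-2n$ (nondegeneracy of $\sigma$ on $X$) this forces $\dim W\le 2n-i$. That is the content of Theorem \ref{propcoiso}(i): any component of $S_iX$ of codimension $\le i$ has codimension exactly $i$, so the upper bound holds for every projective hyper-K\"ahler manifold, not just $S^{[n]}$, and moreover each such maximal component is algebraically coisotropic, fibered into $i$-dimensional orbits.

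The genuinely open content of Conjecture \ref{conjmain} is therefore only the existence statement: producing, for each $0\le i\le n$, a codimension-$i$ subvariety contained in $S_iX$. On that side your description agrees with the paper: uniruled divisors for $i=1$, constant cycle Lagrangians for $i=n$ (Pacienza's question), the identification of $S_\cdot$ with $N_\cdot$ for $S^{[n]}$ (Theorem \ref{theointrofiltK3}) together with the explicit constructions of Section \ref{sec4}, the Charles--Pacienza and Lin results cited in the introduction, and the LLSS eightfolds (Corollary \ref{corollss}, obtained by transporting coisotropic subvarieties of $F\times F$ through the degree-$6$ rational map $\psi$). I would only caution that your closing claim that the conjecture "would follow" from the conjectural multiplicative splitting of the Bloch--Beilinson filtration is not asserted in the paper and would itself require an argument.
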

The case $i=n$, that is Lagrangian constant cycles subvarieties, was first asked by Pacienza
(oral communication).
This conjecture and the axioms of Bloch-Beilinson filtration   would imply in particular that
the natural map
$$S_i{\rm CH}_0(X)\rightarrow {\rm CH}_0(X)/{F'}_{BB}^{n-i+1}{\rm CH}_0(X)$$
is surjective (see Lemma \ref{lesurjtoopp}). We conjecture in fact that this map is
an isomorphism (cf. Conjecture \ref{conjfiltopp}).

A good evidence for Conjecture \ref{conjmain}
is provided by the results of Charles and Pacienza
\cite{chapa}, which deal with the deformations of $S^{[n]}$ (case $i=1$), and
the deformations of $S^{[2]}$, (case $i=2$), and Lin \cite{lin} who constructs
constant cycles Lagrangian subvarieties in hyper-K\"ahler manifolds admitting
a Lagrangian fibration. Another evidence is given by the
complete family
of hyper-K\"ahler $8$-folds constructed by Lehn-Lehn-Sorger-van Straten in \cite{LLS}
 that we will study in Section \ref{sec4} (see Corollary
\ref{corollss}). We will prove there that they satisfy Conjecture \ref{conjmain}. In fact, we
describe a parametrization of them which should make accessible for them a number
of conjectures made in this paper, \cite{beausplit}, or \cite{voisinPAMQ}, by reduction
to the case of the variety of lines of a cubic fourfold.

We will explain in Section \ref{sec2} that Conjecture \ref{conjmain}
 contains
as a by-product
an existence conjecture for algebraically coisotropic
(possibly singular) subvarieties of $X$ of codimension $i$. By this we mean the following:
\begin{Defi}\label{deficoiso} A subvariety $Z\subset X$ is coisotropic if
for any $z\in Z_{reg}$, $T_{Z,z}^{\perp \sigma}\subset T_{Z,z}$.
\end{Defi}
Here $\sigma$ is the $(2,0)$-form on $X$.
Given a coisotropic subvariety $Z\subset X$, the open set
$Z_{reg} $ has an  integrable distribution (a foliation) given by the
vector subbundle $T_Z^{\perp \sigma}$, with  fiber
$T_{Z,z}^{\perp \sigma}\subset T_{Z,z}$, or equivalently,  the kernel
of the restricted form $\sigma_{\mid Z}$ which has by assumption the constant minimal rank.
\begin{Defi}\label{deficoisoalg} A subvariety $Z\subset X$ of codimension $i$ is algebraically coisotropic if
the distribution above is algebraically integrable, by which we mean
that there exists
a rational map $\phi:Z\dashrightarrow B$ onto a variety $B$ of dimension $2n-2i$
such that $\sigma_{\mid Z}$ is the pull-back to $Z$ of a
$(2,0)$-form on $B$, $\sigma_{\mid Z}=\phi^*\sigma_B$.
\end{Defi}
Any divisor in a hyper-K\"ahler variety is coisotropic. However, only few of them are algebraically isotropic: In fact,
Amerik and Campana prove in \cite{amca} that if $n\geq2$, a smooth divisor is algebraically isotropic if and only if it is uniruled. The regularity assumption here is of course crucial.

The link between Conjecture \ref{conjmain} and the existence of algebraically
coisotropic subvarieties is provided by Mumford's theorem \cite{mumford} on pull-backs of
holomorphic forms and rational equivalence. The following result
 will be proved in Section \ref{sec2}, where
we will also describe the restrictions satisfied by the cohomology classes of coisotropic subvarieties.
\begin{theo} (Cf. Theorem \ref{propcoiso}) Let $Z$ be a codimension $i$ subvariety of a
hyper-K\"ahler manifold $X$. Assume that any point of $Z$ has an
orbit of dimension $\geq i$ under rational equivalence in $X$ (that
is $Z\subset S_iX$). Then $Z$ is algebraically coisotropic and the
fibers of the isotropic fibration are  $i$-dimensional orbits of $X$
for rational equivalence.
\end{theo}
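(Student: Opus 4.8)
\medskip
\noindent\textbf{Proof sketch.} The plan is to extract from the hypothesis $Z\subset S_iX$ an algebraic family of $i$-dimensional constant cycle subvarieties covering $Z$, and to feed it into Mumford's theorem \cite{mumford} on holomorphic forms and rational equivalence in order to control the restriction $\sigma_{\mid Z}$. To build the family, write $\{(z,x)\in Z\times X:\ x\equiv_{rat}z\ \text{in}\ X\}$ as a countable union of irreducible closed subsets; since every point of $Z$ lies in $S_iX$, a Baire category argument produces one such subset $\mathcal Y$ with $\mathrm{pr}_Z\colon\mathcal Y\to Z$ dominant and general fibre of dimension $\geq i$, and cutting the fibres by a general linear subspace we may assume $\mathcal Y\subset Z\times X$ is irreducible of dimension $2n$ with $\mathrm{pr}_Z^{-1}(z)=\{z\}\times Y_z$, where $Y_z\subset O_z$ is an $i$-dimensional constant cycle subvariety. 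Choose a resolution $V\to\mathcal Y$; let $f\colon V\to X$ be the lift of $\mathrm{pr}_X$, with image $X'':=\overline{\mathrm{pr}_X(\mathcal Y)}$, and $g\colon V\to X$ the composition $V\to\mathcal Y\xrightarrow{\mathrm{pr}_Z}Z\hookrightarrow X$.

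The one non-formal input is this: for every $v\in V$ one has $f(v)\equiv_{rat}g(v)$ in ${\rm CH}_0(X)$, both sides being the class of a point of $O_{g(v)}$; hence Mumford's theorem, in the form that a correspondence killing the $0$-cycle of every point kills holomorphic forms of positive degree (applied to $\Gamma_f-\Gamma_g\subset V\times X$), gives $f^*\alpha=g^*\alpha$ in $H^0(V,\Omega^{\bullet}_V)$ for all holomorphic forms $\alpha$ of positive degree on $X$ (taking $g$ constant this recovers, en passant, that constant cycle subvarieties are isotropic for $\sigma$). With $\alpha=\sigma$ this is the key identity $\mathrm{pr}_X^*\sigma=\mathrm{pr}_Z^*(\sigma_{\mid Z})$ on $V$.

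The rest is symplectic linear algebra. At a general $v\in V$ over $(z,y)$ with $y\in Y_z$, apply the identity with one argument $a$ tangent to the fibre $\{z\}\times Y_z$: then $\sigma(d\,\mathrm{pr}_X(a),d\,\mathrm{pr}_X(b))=0$ for all $b\in T_{V,v}$, and since $\mathrm{pr}_X$ is injective on the fibre and $d\,\mathrm{pr}_X(T_{V,v})=T_{X'',y}$, this gives $T_{Y_z,y}\subset T_{X'',y}^{\perp\sigma}$; as $Y_z\subset X''$ also $T_{Y_z,y}\subset T_{X'',y}$, so $\dim X''\leq 2n-i$ and $\sigma_{\mid X''}$ has kernel of dimension $\geq i$ generically, whence $\sigma^{n-i+1}_{\mid X''}=0$. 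Pulling back by $f$ and using the key identity, $\mathrm{pr}_Z^*\big((\sigma_{\mid Z})^{n-i+1}\big)=f^*(\sigma^{n-i+1})=0$, so $(\sigma_{\mid Z})^{n-i+1}=0$ since $\mathrm{pr}_Z$ is dominant; thus $\mathrm{rk}(\sigma_{\mid T_{Z,z}})\leq 2n-2i$ at the general smooth point, hence (the rank of a $2$-form being everywhere at most its generic value) everywhere on $Z_{reg}$. Since $\ker(\sigma_{\mid T_{Z,z}})=T_{Z,z}\cap T_{Z,z}^{\perp\sigma}\subset T_{Z,z}^{\perp\sigma}$ has dimension $\leq 2n-\dim Z=i$, equality holds and $T_{Z,z}^{\perp\sigma}=\ker(\sigma_{\mid T_{Z,z}})\subset T_{Z,z}$, i.e. $Z$ is coisotropic (Definition \ref{deficoiso}).

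For the algebraic coisotropy one then shows $X''=Z$ — the key identity forces $\dim X''=2n-i=\dim Z$, after which $Z$ and $X''$ are matched by $\mathcal Y$ — so the $Y_z$ lie in $Z$ and the displayed computation reads $T_{Y_z,y}=T_{Z,y}^{\perp\sigma}$ generically; hence the $Y_z$ are exactly the leaves of the foliation $T_Z^{\perp\sigma}$, they are $i$-dimensional orbits for rational equivalence, and letting $B$ be the $(2n-2i)$-dimensional variety parametrising them gives $\phi\colon Z\dashrightarrow B$ with fibres the $Y_z$. Finally $\sigma_{\mid Z}$ is closed and $\iota_a(\sigma_{\mid Z})=0$ for $a$ tangent to a fibre, so $\mathcal L_a(\sigma_{\mid Z})=d\,\iota_a(\sigma_{\mid Z})=0$ and $\sigma_{\mid Z}$ is basic, hence descends: $\sigma_{\mid Z}=\phi^*\sigma_B$ (Definition \ref{deficoisoalg}). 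I expect the main obstacle to be the use of Mumford's theorem, turning rational equivalence into an identity of holomorphic forms; granting it, the coisotropy is formal, and the remaining delicate point is the bookkeeping in the last step — that $X''=Z$ and that the leaves of $T_Z^{\perp\sigma}$ really form an algebraic family of the expected dimension — which is also the point where Conjecture \ref{conjmain} hovers in the background.
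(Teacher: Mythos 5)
Your first half is sound and is essentially the paper's own argument: the countability/Baire selection of an algebraic family of $i$-dimensional constant cycle subvarieties $Y_z\subset O_z$ over $Z$, the application of Mumford's theorem (Lemma \ref{propmum}) to get $\mathrm{pr}_X^*\sigma=\mathrm{pr}_Z^*(\sigma_{\mid Z})$ on the family, and the rank computation giving $(\sigma_{\mid Z})^{n-i+1}=0$ and hence coisotropy of $Z$. The gap is the sentence ``one then shows $X''=Z$''. What the form identity actually yields is $\dim X''=2n-i=\dim Z$; equal dimensions together with the existence of the correspondence $\mathcal Y$ (both projections dominant with equidimensional fibres) do not force $X''=Z$, and in general this is false: the $i$-dimensional pieces $Y_z\subset O_z$ furnished by the hypothesis need not contain $z$ and need not lie in $Z$, so the incidence component your argument selects may pair every point of $Z$ with pieces lying entirely outside $Z$. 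For instance, let $X=S^{[2]}$ and let $Z$ be (the proper transform of) $R\times R'\subset S^{(2)}$ for two distinct rational curves $R,R'$ on the $K3$ surface $S$: every point of $Z$ is rationally equivalent to $2o_S$, and the construction may perfectly well select $Y_z=R''^{(2)}$ for a third rational curve $R''$, giving $X''=R''^{(2)}\neq Z$ although both have dimension $2=2n-i$. Since in your write-up the algebraicity of the leaves of $\ker\sigma_{\mid Z}$, their identification with the $Y_z$, and their constant-cycle property (i.e. the ``algebraically'' in the conclusion and all of the statement about the fibres being orbits) rest on $X''=Z$, this is a genuine gap, not a mere bookkeeping point.

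The paper closes exactly this gap without ever claiming $Z''=Z$ (its notation for your $X''$): it restricts the form identity to a multisection $\Gamma$ of the family which is generically finite over $Z$ and dominates $Z''$, obtaining $f_\Gamma^*(\sigma_{\mid Z''})=q_\Gamma^*(\sigma_{\mid Z})$, and then transports each leaf $f(K_k)\subset Z''$ back into $Z$ as $q_\Gamma\bigl(f_\Gamma^{-1}(f(K_k))\bigr)$: these images are tangent to $\ker\sigma_{\mid Z}$ by the restricted identity, of dimension exactly $i$ by the dimension count, and all of their points are rationally equivalent to $f(k)$ because rational equivalence is imposed twice through the correspondence; this produces the algebraic leaves inside $Z$ and proves (ii) simultaneously. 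If you replace your ``$X''=Z$'' step by this transport argument, your remaining steps (the leaf-space/Hilbert-scheme construction of $B$ and the descent $\sigma_{\mid Z}=\phi^*\sigma_B$) go through as written.
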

 In Section \ref{sec3}, starting from the case of $S^{[n]}$, where things work very well thanks to Theorem \ref{theointrofiltK3}, we will then  discuss
the following next ``conjecture'':
\begin{conj} \label{conjfiltopp} Let $X$ be a projective hyper-K\"ahler manifold of dimension
$2n$. Then  the filtration $S_\cdot$ is opposite to the filtration $F'_{BB}$ and thus provides
a splitting of it.
\end{conj}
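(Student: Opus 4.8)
The plan is to split the statement into a ``surjectivity'' half, which should follow from the conjectural framework already in place, and an ``injectivity'' half, which is the genuinely new input. Since $S_\cdot$ and $F'_{BB}$ are both finite decreasing filtrations of ${\rm CH}_0(X)$ with $S_0{\rm CH}_0(X)={F'}_{BB}^0{\rm CH}_0(X)={\rm CH}_0(X)$ and $S_{n+1}{\rm CH}_0(X)={F'}_{BB}^{n+1}{\rm CH}_0(X)=0$ (the first vanishing because orbits are isotropic, the second being part of the Bloch--Beilinson axioms), to say that they are opposite is the same as to say that
\[
{\rm CH}_0(X)=S_i{\rm CH}_0(X)\oplus {F'}_{BB}^{n-i+1}{\rm CH}_0(X)\qquad (0\le i\le n),
\]
in which case the pieces ${\rm CH}_0(X)_{2b}:=S_{n-b}{\rm CH}_0(X)\cap {F'}_{BB}^{b}{\rm CH}_0(X)$ simultaneously split both filtrations, with $S_i{\rm CH}_0(X)=\bigoplus_{b\le n-i}{\rm CH}_0(X)_{2b}$ and ${F'}_{BB}^{a}{\rm CH}_0(X)=\bigoplus_{b\ge a}{\rm CH}_0(X)_{2b}$. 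Equivalently, for each $i$ one must show that the natural map $S_i{\rm CH}_0(X)\to {\rm CH}_0(X)/{F'}_{BB}^{n-i+1}{\rm CH}_0(X)$ is both surjective and injective.

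The surjectivity is, up to Conjecture \ref{conjmain}, for free: granting that $S_iX$ has dimension $2n-i$, the argument of Lemma \ref{lesurjtoopp} together with the axioms of the Bloch--Beilinson filtration gives that $S_i{\rm CH}_0(X)$ already surjects onto ${\rm CH}_0(X)/{F'}_{BB}^{n-i+1}{\rm CH}_0(X)$. So the whole content of the conjecture is the \emph{injectivity}, i.e. the vanishing $S_i{\rm CH}_0(X)\cap {F'}_{BB}^{n-i+1}{\rm CH}_0(X)=0$: a $0$-cycle supported on the locus $S_iX$ which lies in the deepest $n-i+1$ steps of the Bloch--Beilinson filtration must be zero. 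This is a Bloch-type statement, asserting that cycles concentrated on constant-cycle-fibered (hence, by Theorem \ref{propcoiso}, algebraically coisotropic) loci of codimension $i$ cannot be ``transcendental of level $>2(n-i)$''.

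The one case where I can settle this unconditionally is $X=S^{[n]}$, and there it is essentially done: by Theorem \ref{theointrofiltK3} one has $S_\cdot=N_\cdot$ on ${\rm CH}_0(S^{[n]})$, by Proposition \ref{prodecompchow} the filtration $N_\cdot$ already splits the Bloch--Beilinson filtration, so it only remains to identify $N_i{\rm CH}_0(S^{[n]})={\rm Im}\bigl((io_S)_*:{\rm CH}_0(S^{[n-i]})\to {\rm CH}_0(S^{[n]})\bigr)$ with the sub-sum $\bigoplus_{b\le n-i}{\rm CH}_0(S^{[n]})_{2b}$ of the Beauville decomposition. I would do this by tracking the pushforward by $io_S$ through the explicit motivic decomposition of the symmetric product ${\rm CH}_0(S^{[n]})={\rm CH}_0(S^{(n)})$: pushing forward along $(io_S)_*$ amounts to multiplication by the canonical class $o_S$ and raises the grading index by $i$, which yields the identification and hence opposedness. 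From there, deformations of $S^{[n]}$ and of $S^{[2]}$, and the Lehn--Lehn--Sorger--van Straten eightfolds through their parametrization by the variety of lines of a cubic fourfold (Section \ref{sec4}, Corollary \ref{corollss}), can be attacked by propagating the statement in families, using that the relevant dimension estimates (Conjecture \ref{conjmain}, known in these cases by \cite{chapa}, \cite{lin} and Corollary \ref{corollss}) and the Chow-theoretic inputs are deformation-robust.

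The main obstacle is exactly the injectivity for a general hyper-K\"ahler $X$: it is a generalized Bloch conjecture, and one cannot even invoke it cleanly since $F'_{BB}$ is not known to exist. A strategy would be to upgrade Theorem \ref{propcoiso} --- which produces a rational fibration $\phi:S_iX\dashrightarrow B$ with $\dim B=2(n-i)$ onto which $\sigma_{\mid S_iX}$ descends, with fibres $i$-dimensional orbits for rational equivalence --- to the Chow-theoretic statement that every $0$-cycle supported on $S_iX$ is, modulo ${\rm CH}_0$-trivial corrections, pulled back from ${\rm CH}_0(B)$; a Mumford-type argument would then bound its Bloch--Beilinson level by $2(n-i)$ and exclude it from ${F'}_{BB}^{n-i+1}{\rm CH}_0(X)$. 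The delicate points are that $S_iX$ is in general singular and reducible, that $\phi$ is produced only as a rational map, and --- most seriously --- that one must compare rational equivalence \emph{on} $S_iX$ (or on a resolution) with rational equivalence \emph{in} $X$; the inclusion $O_z\subset O_z^S$ that makes this comparison automatic for $K3$ surfaces has no evident higher-dimensional analogue, so some genuinely new geometric input on the family of orbits over $B$ would be required.
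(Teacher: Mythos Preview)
The statement you are addressing is labelled a \emph{conjecture} in the paper, and the paper does not prove it in general. Your proposal correctly recognises this: you reduce the statement to a surjectivity half (which the paper, like you, deduces from Conjecture~\ref{conjmain} via Lemma~\ref{lesurjtoopp}) and an injectivity half, which you rightly identify as the open point. Your treatment of $X=S^{[n]}$ matches the paper's: Theorem~\ref{theofiltcomp} identifies $S_\cdot$ with $N_\cdot$, and Proposition~\ref{prodecompchow}(ii) shows $N_\cdot$ is opposite to $F'_{BB}$; the identification $N_i=\bigoplus_{b\le n-i}{\rm CH}_0(S^{[n]})_{2b}$ you propose to track by hand is exactly what Proposition~\ref{prodecompchow}(ii) states.

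Where your account diverges from the paper is in the other examples. You suggest propagating the $S^{[n]}$ result to deformations and to the LLSS eightfolds ``in families''; the paper does not do this, and for the LLSS varieties it only establishes Conjecture~\ref{conjmain} (Corollary~\ref{corollss}), not Conjecture~\ref{conjfiltopp}. Conversely, you omit the one other case where the paper \emph{does} verify the conjecture: the Fano variety of lines $F$ of a cubic fourfold (Proposition~\ref{procascubic}). There the argument is not by deformation from $S^{[2]}$ but uses the Shen--Vial decomposition as a stand-in for the Bloch--Beilinson filtration, together with the explicit geometry of the uniruled divisor $D$ and the surface $\Sigma_2$ arising from the rational self-map $\phi:F\dashrightarrow F$. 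Your closing discussion of the obstacle in the general case---reducing to a strictness statement for the pushforward from ${\rm CH}_0(B)$ of the coisotropic base---is in line with the paper's own framing around Remark~\ref{remaarevoir} and Lemma~\ref{leprin}, and like the paper you correctly leave it unresolved.
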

Concretely, this means that
\begin{eqnarray}\label{formulatrans}
S_i{\rm CH}_0(X)\cong {\rm CH}_0(X)/{F'}_{BB}^{n-i+1}{\rm CH}_0(X)
\end{eqnarray}
for any $i\geq 0$.
Assuming (\ref{formulatrans}) holds, we have a natural decomposition
of ${\rm CH}_0$ into a  direct sum
$${\rm CH}_0(X)=\oplus_j{\rm CH}_0(X)_{2j},$$
where ${\rm CH}_0(X)_{2j}:=S_{n-j}{\rm CH}_0(X)\cap F^{2j}_{BB} {\rm
CH}_0(X)$, and this gives a splitting of the Bloch-Beilinson
filtration.

We will explain in Section \ref{sec3} how this conjecture would fit with the expected multiplicativity
property of the Beauville filtration, and in particular with
the following expectation:

\begin{conj} The classes of codimension $i$ subvarieties of $X$ contained in $S_iX$
belong to the $0$-th piece of the Beauville decomposition and their cohomology classes
generate the space of coisotropic classes.
\end{conj}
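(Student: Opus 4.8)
The plan is to treat the two assertions separately, the first by Mumford's theorem on pull-backs of holomorphic forms and a Bloch--Srinivas-type decomposition of the diagonal, the second by Hodge theory and the monodromy action on $H^*(X)$.

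For the first assertion, let $Z\subset X$ be of codimension $i$ with $Z\subset S_iX$. By Theorem \ref{propcoiso} there is an isotropic fibration $\phi:Z\dashrightarrow B$ with ${\rm dim}\,B=2n-2i$ whose general fibres are $i$-dimensional orbits for rational equivalence in $X$; in particular every fibre is a constant cycle subvariety of $X$, so $Z$ is swept out by an algebraic family of $i$-dimensional constant cycle subvarieties. The first elementary remark is that for any $\gamma\in {\rm CH}^{2n-i}(X)$, after moving $\gamma$ the $0$-cycle $[Z]\cdot\gamma$ is represented by a $0$-cycle supported on $Z$, hence $[Z]\cdot\gamma\in S_i{\rm CH}_0(X)$; granting Conjecture \ref{conjfiltopp}, $S_i{\rm CH}_0(X)\cong {\rm CH}_0(X)/{F'}_{BB}^{n-i+1}{\rm CH}_0(X)$, so $[Z]\cdot\gamma$ lands in the bottom $n-i+1$ Beauville pieces of ${\rm CH}_0(X)$ for every $\gamma$. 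To upgrade ``bottom pieces'' to ``$0$-th piece'' I would use Mumford's theorem relatively along $\phi$: since the general fibre of $\phi$ is a constant cycle subvariety of $X$, the graph of the inclusion $Z\hookrightarrow X$, seen as a correspondence from $Z$ to $X$, factors through $\phi$ modulo cycles supported over a divisor of $B$ (a relative Bloch--Srinivas decomposition of $\Delta_Z$); feeding this factorization into the (conjectural) Chow--K\"unneth decomposition of $X$ together with the multiplicativity of the Beauville decomposition, the components of the push-forward $[Z]$ of the fundamental class of $Z$ in the pieces ${\rm CH}^i(X)_s$ with $s>0$ must vanish, because the surviving contribution is pulled back from $B$, a variety of dimension $2n-2i$, and the degree shift in the push-forward to $X$ places it at level $0$. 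As all of this is conditional, the realistic target is to carry it out unconditionally where the Beauville decomposition is known: on $S^{[n]}$ and its birational models via Theorem \ref{theointrofiltK3}, and on the Lehn--Lehn--Sorger--van Straten eightfolds of Section \ref{sec4} via reduction to the variety of lines of a cubic fourfold.

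For the second assertion, let $\mathcal{C}^i\subset H^{2i}(X,\mathbb{Q})$ be the space of coisotropic classes, cut out by the linear conditions described in Section \ref{sec2} (those forced by $\sigma_{\mid Z}^{\,n-i+1}=0$ on a smooth model of a representing subvariety). Then $\mathcal{C}^i$ is a rational sub-Hodge structure and is stable under the monodromy group of $X$, hence under the Looijenga--Lunts--Verbitsky Lie algebra acting on $H^*(X)$; so $\mathcal{C}^i\cap H^{2i}(X,\mathbb{Q})_{alg}$ is a direct sum of explicitly known monodromy-subrepresentations. To prove generation it then suffices, after specialising to a very general deformation where the algebraic part is smallest, to exhibit for each irreducible constituent one codimension $i$ subvariety $Z\subset S_iX$ whose class is nonzero on it. The existence of such $Z$ in abundance is precisely Conjecture \ref{conjmain}; so on $S^{[n]}$ and its deformations one feeds in the constant cycle subvarieties constructed by Charles--Pacienza (for $i=1,2$) and by Lin, spreads them over the moduli space following Charles--Pacienza, and checks that their classes are not all contained in a proper monodromy-subrepresentation. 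For a general hyper-K\"ahler $X$ this assertion is only as strong as Conjecture \ref{conjmain}.

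The main obstacle is that neither the Bloch--Beilinson filtration nor the Beauville multiplicative decomposition is available unconditionally, so the first assertion can at present only be proved conditionally in general and unconditionally in the examples; and even granting these inputs, the delicate point is the Bloch--Srinivas-type control of the Chow class of the possibly very singular $Z$ along its isotropic fibration, together with, for the second assertion, showing that the classes of constant cycle subvarieties genuinely fill all of $\mathcal{C}^i$ rather than a proper monodromy-subrepresentation of it.
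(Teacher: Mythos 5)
The statement you are addressing is not proved in the paper: it is stated as a conjecture (a reformulation of rule (*) of Section \ref{sec3} together with the generation statement of Conjecture \ref{conjgen}), and the paper only offers consistency checks and special cases (Theorem \ref{theoisoclass} describing coisotropic classes that are polynomials in $c$ and divisor classes, the conditional Theorem \ref{theocondcond}, Proposition \ref{propcoisigen} for degree $4$ classes on $S^{[n]}$, and the de Cataldo--Migliorini computation for the classes of the $s^{-1}(S^{(\mu)})$). Your text is likewise not a proof but a strategy conditional on the existence of the Bloch--Beilinson filtration, a multiplicative Chow--K\"unneth decomposition, and Conjectures \ref{conjfiltopp} and \ref{conjmain} --- i.e.\ on statements of the same order of difficulty as (or stronger than) the target, so the argument is essentially circular. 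Within the strategy itself there are two concrete gaps. First, for the assertion that $[Z]\in {\rm CH}^i(X)_0$: your reduction only controls the action of $[Z]$ on ${\rm CH}_0(X)$ (that $[Z]\cdot\gamma$ is supported on $Z$, hence lies in $S_i{\rm CH}_0(X)$), which is the easy inclusion already used in Theorem \ref{theocondcond} and Lemma \ref{theofin}; it does not locate the class $[Z]$ itself in the conjectural decomposition. The step ``the surviving contribution is pulled back from $B$, and the degree shift in the push-forward places it at level $0$'' is unsubstantiated: to make sense of it you would need to know how the (conjectural) decomposition behaves under the correspondence $Z$ between $B$ and $X$, which is precisely what is being conjectured; moreover the relative Bloch--Srinivas decomposition of $\Delta_Z$ for a possibly very singular $Z$ controls cycles modulo terms supported over a divisor of $B$ and on a divisor of $Z$, and you do not explain why those error terms land in the $0$-th piece.

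Second, for generation: you write that ``the existence of such $Z$ in abundance is precisely Conjecture \ref{conjmain}'', but Conjecture \ref{conjmain} only asserts that $S_iX$ has a component of codimension $i$, hence produces \emph{some} coisotropic subvarieties; it does not assert that the classes of all codimension $i$ components of $S_iX$ span the full space of coisotropic classes. That stronger statement is exactly Conjecture \ref{conjgen}, which the paper formulates separately and verifies only for $i=2$ on ${\rm Hilb}^n(K3)$ (Proposition \ref{propcoisigen}), by explicitly exhibiting independent classes, not by a monodromy argument. Your monodromy/LLV reduction also needs care: the space of coisotropic Hodge classes of degree $2i$ is cut out by $[\sigma]^{n-i+1}\cup z=0$ inside the space of Hodge classes, and by Theorem \ref{theoisoclass}(i) it is, for very general $X$, a condition relative to the splitting $H^2=H^2_{tr}\oplus {\rm NS}(X)_{\mathbb{Q}}$; it is therefore stable only under the monodromy of the family preserving ${\rm NS}(X)$, not under the full monodromy group or the full LLV algebra, and you would still have to match the constructed subvarieties' classes against each irreducible constituent, which is where the actual work lies (this is what the explicit class computations of Section \ref{sec4} do in the known cases).
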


Again, this has concrete consequences that can be investigated for themselves and independently
of the existence of a Bloch-Beilinson filtration, namely the fact
that the cycle class map is injective on the subring of ${\rm CH}(X)$ generated
by these classes and divisor classes.

The paper is organized as follows: In section \ref{sec2}, we will describe
the link between families of constant cycles subvarieties and algebraically coisotropic
subvarieties.
In section \ref{sec1}, we will compare the filtrations $N_\cdot$ and  $S_{\cdot}$
for $X={\rm Hilb}(K3)$. Section \ref{sec3} is devoted to stating conjectures
needed to construct a Beauville decomposition starting from  the filtration
$S_\cdot$. Finally, Section \ref{sec4} will provide a number of geometric constructions
and various evidences for these conjectures, in three cases: Hilbert schemes of $K3$ surfaces,
Fano varieties of lines of cubic fourfolds, and finally the more recent
$8$-folds constructed in \cite{LLS} starting from the Hilbert scheme of cubic rational
curves on cubic fourfolds.

\vspace{0.5cm}

{\bf Thanks}. I thank F. Charles,  K. O'Grady, G. Pacienza and G. Sacc\`a for interesting discussions
related to this paper.
\subsection{Bloch-Beilinson filtration \label{secBB}}
The Bloch-Beilinson filtration $F^\cdot_{BB}$ on the Chow groups with
$\mathbb{Q}$-coefficients of  smooth projective varieties
 is a decreasing filtration which is subject to the following axioms:
\begin{enumerate}
\item \label{itemcorr} It is preserved by correspondences: If $\Gamma\in {\rm CH}(X\times Y)$, then
for all $i$'s, $\Gamma_*(F^i{\rm CH}(X))\subset F^i{\rm CH}(Y)$.
\item \label{itemF1} $F^0{\rm CH}(X)={\rm CH}(X)$,  $F^1{\rm CH}(X)={\rm CH}(X)_{hom}$.
\item \label{itemmult} It is multiplicative: $F^i{\rm CH}(X)\cdot F^j{\rm CH}(X)\subset F^{i+j}{\rm CH}(X)$,
where $\cdot$ is the intersection product.
\item $F^{i+1}{\rm CH}^i(X)=0$ for all $i$ and $X$.
\end{enumerate}
Note that items \ref{itemF1} and \ref{itemmult} imply that
a correspondence $\Gamma\in {\rm CH}(X\times Y)$ which is cohomologous to $0$ shifts the
Bloch-Beilinson filtration:
\begin{eqnarray}\label{eqGammaaction}\Gamma_*(F^i{\rm CH}(X))\subset F^{i+1}{\rm CH}(Y).
\end{eqnarray}
One can also be more precise at this point, namely asking that
$Gr_F^i{\rm CH}^l(X)$ is governed only by $H^{2l-i}(X)$ and its Hodge structure.
Then (\ref{eqGammaaction}) becomes:
\begin{eqnarray}\label{eqGammaaction2}\Gamma_*(F^i{\rm CH}^l(X))\subset F^{i+1}{\rm CH}^{l+r}(Y),
\end{eqnarray}
if $[\Gamma]_*:H^{2l-i}(X,\mathbb{Q})\rightarrow H^{2l+2r-i}(Y,\mathbb{Q})$ vanishes (so we are considering only one K\"unneth component of $[\Gamma]\in H^{2r+2n}(X\times Y,\mathbb{Q})$, where
$n={\rm dim}\, X,\,\Gamma\in {\rm CH}^{r+n}(X\times Y)$).
The reason why, assuming a Bloch-Beilinson filtration exists, the graded pieces of it for $0$-cycles
depend only on holomorphic forms is the fact that according
to (\ref{eqGammaaction}), $Gr_F^i{\rm CH}_0(X)$ should be governed by
the cohomology group $H^{2n-i}(X,\mathbb{Q})$ or dually $H^i(X,\mathbb{Q})$.
On the other hand, ${\rm CH}_0(X)$ is not sensitive to the cohomology of
$X$ which is of geometric coniveau $\geq 1$, that is, supported on a divisor,
hence assuming the generalized Hodge conjecture holds,
it should be sensitive only to the group $H^i(X,\mathbb{Q})/C^1H^i(X,\mathbb{Q})$, where
$N^1H^i(X,\mathbb{Q})$ is the maximal Hodge substructure of $H^i(X,\mathbb{Q})$ which is
of Hodge coniveau $\geq1$, that is, contained in $F^1H^i(X,\mathbb{C})$. But
clearly $H^i(X,\mathbb{Q})/N^1H^i(X,\mathbb{Q})=0$ if and only if $H^{i,0}(X)=0$.

We will refer to the set of axioms  above in the form ``if a Bloch-Beilinson filtration exists'', with
the meaning that it exists for all $X$ (this is necessary as the axiom
\ref{itemcorr} is essential).

Note that there exist many  varieties for which there are
natural candidates for  the Bloch-Beilinson filtration (for example
surfaces) but apart from curves and more generally varieties with representable Chow groups, none are known to satisfy the full set of axioms
above.
\section{Constant cycles subvarieties and coisotropic classes \label{sec2}}

Let $X$ be a smooth projective variety over $\mathbb{C}$ and
let $f:Z\rightarrow X$   be a morphism from a smooth
projective variety admitting a surjective morphism
$p:Z\rightarrow B$, where $B$ is smooth, with the following property:

(*) {\it The fibers of $p$ map via $f$ to
``constant cycle'' subvarieties},

that is, all points in a given fiber of
$p$ map via $f$ to rationally equivalent points  in $X$.
Mumford's theorem \cite{mumford} then implies:
\begin{lemm} \label{propmum} Under assumption (*), for any holomorphic
form $\alpha$ on $X$, there is a holomorphic form
$\alpha_B$ on $B$ such that $f^*\alpha=p^*\alpha_B$.
\end{lemm}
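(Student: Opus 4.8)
The plan is to reduce the statement to Mumford's theorem on the triviality of the pullback of holomorphic forms under a map that contracts rational equivalence, applied fiberwise along $p$. Concretely, I would first show that $f^*\alpha$ restricts to zero on every fiber $Z_b := p^{-1}(b)$, for $b$ general (hence, by a specialization/semicontinuity argument, for all $b$). Indeed, by assumption (*) the morphism $f|_{Z_b} : Z_b \to X$ sends all points of $Z_b$ to a single rational equivalence class in $X$; thus the family of $0$-cycles parametrized by $Z_b$, namely $b' \mapsto f(b')$, is ``constant'' modulo rational equivalence. Mumford's theorem (in the form proved by Mumford for $0$-cycles, and generalized by Roitman, Bloch, and others) states that if two points $x_1, x_2$ of a smooth projective variety $X$ are rationally equivalent, then $\gamma(x_1) = \gamma(x_2)$ for every holomorphic form $\gamma$ on $X$ — and more to the point, that the pullback of a holomorphic form under any morphism from a smooth variety whose image points are all mutually rationally equivalent vanishes. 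Applying this to $f|_{Z_b}$ gives $(f^*\alpha)|_{Z_b} = 0$.

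Next I would upgrade this fiberwise vanishing to the statement that $f^*\alpha$ is a pullback from $B$. Since $f^*\alpha \in H^0(Z, \Omega_Z^k)$ (with $k = \deg \alpha$) restricts to zero on the fibers of the smooth surjective morphism $p : Z \to B$, I claim $f^*\alpha$ lies in the subspace $p^* H^0(B, \Omega_B^k)$. The cleanest way is to use the relative cotangent sequence $0 \to p^*\Omega_B \to \Omega_Z \to \Omega_{Z/B} \to 0$: taking exterior powers and filtering, a holomorphic $k$-form $\beta$ on $Z$ whose image in $H^0(Z, \Omega_{Z/B}^k)$ vanishes (which is what fiberwise vanishing at general points gives, after checking it cuts out the same condition) actually lies in $H^0(Z, p^*\Omega_B \wedge \Omega_Z^{k-1})$; iterating, and using that $\beta$ is closed (being holomorphic on a Kähler manifold), one pushes all the way down to conclude $\beta \in p^* H^0(B, \Omega_B^k)$. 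Alternatively and more simply: a closed holomorphic $k$-form on $Z$ that restricts to zero on a general fiber of $p$ descends to $B$, because its class in $H^k(Z, \mathbb{C})$ lies in $p^* H^k(B,\mathbb{C})$ by the Leray/fiberwise-restriction argument, and a holomorphic form whose cohomology class is pulled back from $B$ is itself pulled back (compare types). This produces the desired $\alpha_B \in H^0(B, \Omega_B^k)$ with $p^*\alpha_B = f^*\alpha$.

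The main obstacle is the passage from the infinitesimal/fiberwise statement to the global ``descent to $B$'' statement, i.e.\ ensuring that vanishing of $f^*\alpha$ on the fibers of $p$ (a priori only at general points, where $p$ is smooth and the fibers are well-behaved) genuinely forces $f^*\alpha$ to be a pullback of a form on all of $B$. This requires care about: (i) the locus where $p$ fails to be smooth, handled by working over a dense open $B^\circ \subset B$ and then extending the form $\alpha_B$ across the complement using that $B$ is smooth projective and a holomorphic form extends over a closed subset of codimension $\geq 1$ (Hartogs-type extension for holomorphic forms on smooth varieties); and (ii) the fact that Mumford's theorem as classically stated is about $0$-cycles and points, so one must phrase assumption (*) correctly — all points of a fiber being rationally equivalent \emph{in $X$} — and invoke the version of Mumford's argument that handles the pullback of forms of arbitrary degree under a map contracting a positive-dimensional family to a constant-modulo-rational-equivalence family, which is exactly the content recorded in \cite{mumford} and its standard generalizations. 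Once these points are addressed, the identity $f^*\alpha = p^*\alpha_B$ on a dense open set propagates to all of $Z$ by continuity.
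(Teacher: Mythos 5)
Your first step is sound: assumption (*) together with Mumford's theorem does give $(f^*\alpha)|_{Z_b}=0$ on the (general) fibers $Z_b$ of $p$ -- this is exactly the special case ``$B=$ point'' of the lemma (Corollary \ref{coroiso} in the paper). The gap is in the descent step, and it is not a technical one but a logical one: vanishing of a holomorphic $k$-form on all fibers of $p$ does \emph{not} imply that it is a pullback from $B$. Concretely, on $Z=E\times E'$ a product of elliptic curves with $p=pr_1$, the form $\beta=pr_1^*\omega\wedge pr_2^*\omega'$ restricts to zero on every fiber but is not pulled back from $E$ (there are no holomorphic $2$-forms on $E$, and $\beta\not=0$). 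This defeats both of your proposed mechanisms. In the relative cotangent filtration, fiberwise vanishing only kills the image in the top quotient $\Omega^k_{Z/B}$, i.e.\ it places $f^*\alpha$ in $p^*\Omega_B\wedge\Omega_Z^{k-1}$; there is nothing to drive the ``iteration'', and closedness is automatic for holomorphic forms on a projective variety, so it adds no constraint (the form $\beta$ above is closed). Likewise, in the Leray argument, triviality of the restriction to fibers only says the class lies in the first step of the Leray filtration, not in $p^*H^k(B,\mathbb{C})$; again $\beta$ is a counterexample, its class being a nonzero K\"unneth component of $H^1(E)\otimes H^1(E')$. By reducing to a fiber-by-fiber statement you have discarded exactly the global content of (*): that the class of $f(z)$ in ${\rm CH}_0(X)$ depends only on $p(z)$ as $z$ varies over all of $Z$, not merely within one fiber at a time.

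The paper's proof keeps this global information by working with correspondences: choose $B'\subset Z$ generically finite of degree $N$ over $B$, set $p'=p_{\mid B'}$, $f'=f_{\mid B'}$, and compare the graph $\Gamma_f$ with the correspondence $\Gamma'$ obtained by composing $p$ with $\Gamma'':=\frac{1}{N}(p',Id)_*(\Gamma_{f'})$ from $B$ to $X$. Assumption (*) says precisely that $\Gamma_{f*}=\Gamma'_*$ on ${\rm CH}_0(Z)$, and Mumford's theorem (for correspondences agreeing on $0$-cycles) then yields
$f^*\alpha=\Gamma_f^*\alpha={\Gamma'}^*\alpha=p^*({\Gamma''}^*\alpha)$,
which is the pullback statement directly, with $\alpha_B={\Gamma''}^*\alpha$. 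If you wish to salvage your outline, the missing ingredient is some version of this relative use of rational equivalence -- e.g.\ comparing $f$ with the composite of $p$ and a multisection, rather than restricting to individual fibers -- which is in substance the paper's argument.
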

\begin{proof} Let $B'\subset Z$ be a closed subvariety such that
$p_{\mid B'}=:p'$ is generically finite and let $N:={\rm deg}\,B'/B$, $f':=f_{\mid B'}$.
We have two correspondences between $Z$ and $X$, namely $\Gamma_f$,
which is  given by the graph
of $f$, and $\Gamma'$, which is defined as the
composition with $p$ of the correspondence
$$\Gamma'':=\frac{1}{N}(p',Id)_*(\Gamma_{f'})$$ between $B$ and $X$.
Assumption (*) says that
$$\Gamma_*=\Gamma'_*:{\rm CH}_0(Z)\rightarrow {\rm CH}_0(X).$$

Mumford's theorem then says that for any
holomorphic form $\alpha$ on $X$, one has
$$f^*\alpha=\Gamma_f^*\alpha={\Gamma'}^*\alpha=({\Gamma''\circ p})^*\alpha=p^*( {\Gamma''}^*\alpha),$$
which proves the result with $\alpha_B={\Gamma''}^*\alpha$.
\end{proof}

We now consider the case where $X$ is a projective hyper-K\"ahler manifold of dimension
$2n$ with holomorphic
$2$-form $\sigma$.
A particular case of Lemma \ref{propmum} is the case where $B$ is a point,
which gives the following statement:
\begin{coro}\label{coroiso} Let $\Gamma\subset X$ be a constant cycle subvariety.
Then $\Gamma$ is an isotropic subvariety, that is
$$\sigma_{\mid \Gamma_{reg}}=0.$$
In particular, ${\rm dim}\,\Gamma\leq n$, and in the case of equality, $\Gamma$ is a Lagrangian
(possibly singular) subvariety.
\end{coro}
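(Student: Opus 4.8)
The plan is to read this off from Lemma \ref{propmum} in the special case where $B$ is a point. First I would choose a resolution of singularities $\mu\colon\widetilde{\Gamma}\to\Gamma$ with $\widetilde{\Gamma}$ smooth and projective, and set $f:=\iota\circ\mu\colon\widetilde{\Gamma}\to X$, where $\iota\colon\Gamma\hookrightarrow X$ is the inclusion. Since $\Gamma$ is a constant cycle subvariety, any two points of $\Gamma$ — hence any two points of $\widetilde{\Gamma}$, since $f(\widetilde{\Gamma})=\Gamma$ — map to rationally equivalent $0$-cycles in $X$. Thus assumption (*) of Lemma \ref{propmum} holds for the structure morphism $p\colon\widetilde{\Gamma}\to B:=\mathrm{pt}$.

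Applying Lemma \ref{propmum} to the holomorphic $2$-form $\alpha=\sigma$ then produces a holomorphic $2$-form $\sigma_B$ on $B$ with $f^*\sigma=p^*\sigma_B$. But $B$ is a point, so $\sigma_B=0$, and therefore $\mu^*(\sigma_{\mid\Gamma})=f^*\sigma=0$ on $\widetilde{\Gamma}$. Because $\mu$ is birational it restricts to an isomorphism over a dense Zariski-open $U\subset\Gamma_{reg}$, so $\sigma_{\mid U}=0$; since $\sigma_{\mid\Gamma_{reg}}$ is a holomorphic (in particular continuous) form and $U$ is dense in $\Gamma_{reg}$, we get $\sigma_{\mid\Gamma_{reg}}=0$, which is the asserted isotropy. (Alternatively, by Hironaka one may take $\mu$ to be an isomorphism over all of $\Gamma_{reg}$ from the outset, avoiding the density step.)

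For the numerical consequences I would argue pointwise. Since $\sigma$ is a holomorphic symplectic form, at each $z\in\Gamma_{reg}$ the pairing $\sigma_z$ is non-degenerate on $T_{X,z}$, while the subspace $T_{\Gamma,z}\subset T_{X,z}$ is isotropic by the previous paragraph. An isotropic subspace of a $2n$-dimensional symplectic vector space has dimension at most $n$, so $\dim\Gamma=\dim T_{\Gamma,z}\le n$; and if $\dim\Gamma=n$ then $T_{\Gamma,z}$ is a maximal isotropic, i.e. Lagrangian, subspace of $T_{X,z}$ for every $z\in\Gamma_{reg}$, which is precisely the statement that $\Gamma$ is a (possibly singular) Lagrangian subvariety.

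There is no genuine obstacle here, as the corollary is a direct specialization of Lemma \ref{propmum}; the only point deserving a little care is the passage from the vanishing of $f^*\sigma$ on the resolution $\widetilde{\Gamma}$ to the vanishing of $\sigma$ on $\Gamma_{reg}$ itself, which is settled by the density argument (or by choosing the resolution to be an isomorphism over the smooth locus).
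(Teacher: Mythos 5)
Your proof is correct and follows the paper's own route: the paper derives the corollary precisely as the $B=\mathrm{point}$ case of Lemma \ref{propmum}, and your added details (resolving $\Gamma$ to apply the lemma, then the standard bound $\dim\le n$ for isotropic subspaces of a symplectic vector space) are exactly the routine steps the paper leaves implicit.
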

We are interested in this paper to coisotropic subvarieties whose study started
only recently (see \cite{amca}, \cite{manetti}). Such subvarieties can be constructed
applying the following result:

\begin{theo}\label{propcoiso} Assume $S_iX$ (see Definition
\ref{defifilt}) contains a  closed algebraic subvariety
$Z$ of codimension
$\leq i$. Then:

(i)  The codimension of $Z$ is equal to $i$
and (the smooth locus of) $Z$ is algebraically coisotropic (see Definition \ref{deficoisoalg}).

(ii) Furthermore,  the general fibers of the coisotropic fibration $Z\dashrightarrow B$ are constant cycles
subvarieties of $X$ of dimension $i$.
\end{theo}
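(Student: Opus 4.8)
The plan is to produce, through a very general point of $Z$, an $i$-dimensional constant cycle subvariety of $X$ contained in $Z$, to organize these into a rational fibration $Z\dashrightarrow B$, and then to read off both the codimension of $Z$ and its algebraic coisotropy by playing Mumford's theorem (Lemma \ref{propmum}) against elementary symplectic linear algebra; throughout, ``very general point'' means ``outside a countable union of proper closed subsets''. First I would set up the family. Let $r$ be the largest integer such that $\{z\in Z:\dim O_z\ge r\}$ is non-meager in $Z$ (it exists, with $i\le r\le n$, since this set is all of $Z$ for $r=i$ by hypothesis and orbit dimensions are $\le n$ by Corollary \ref{coroiso}); then, by the Baire category theorem and the countable decomposition of $S_rX$, one has $Z\subset S_rX$ while a very general $z\in Z$ has $\dim O_z=r$. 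Write $S_rX=\bigcup_\beta{\rm Sw}_\beta$, where ${\rm Sw}_\beta$ is the image in $X$ of the universal family over an irreducible subvariety $M_\beta$ of a Chow variety of $X$ parametrizing irreducible $r$-dimensional constant cycle subvarieties (using that the constant-cycle condition cuts out a countable union of closed subsets in each Chow variety). Baire again gives a $\beta$ with $Z\subset Y:=\overline{{\rm Sw}_\beta}$; set $M:=M_\beta$, let $g\colon\mathcal Z\to M$ be the corresponding family, with fibre $W_m$ over $m$ an irreducible $r$-dimensional constant cycle subvariety of $X$, and $f\colon\mathcal Z\to X$ the sweep map, so that $\overline{f(\mathcal Z)}=Y\supset Z$, everything irreducible.

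The key technical point is that $f\colon\mathcal Z\to Y$ is generically finite, equivalently $\dim\mathcal Z=\dim Y$. Were it not, then for $y$ in a dense open of $Y$ the set $\{m:y\in W_m\}$ would contain an irreducible curve $C$, and $\bigcup_{m\in C}W_m$ would be a subvariety of $O_y$ (all the $W_m$ pass through $y$ and are constant cycle) of dimension $\le r+1$ containing infinitely many distinct irreducible $r$-dimensional subvarieties, hence of dimension exactly $r+1$; then $\dim O_y\ge r+1$ on a dense open of $Y$, so $Y\subset S_{r+1}X$ by Baire, so $Z\subset S_{r+1}X$, contradicting the maximality of $r$.

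Next I would invoke Lemma \ref{propmum}: after suitable blow-ups there are smooth projective $\widetilde{\mathcal Z},\widetilde M$ with $\widetilde g\colon\widetilde{\mathcal Z}\to\widetilde M$ surjective, $\widetilde f\colon\widetilde{\mathcal Z}\to X$, and the fibres of $\widetilde g$ mapping into constant cycle subvarieties, so there is $\sigma_{\widetilde M}\in H^0(\widetilde M,\Omega^2)$ with $\widetilde f^*\sigma=\widetilde g^*\sigma_{\widetilde M}$. At a general $\widetilde y$, lying over a general point $y$ of $Y$ (where $Y$ is smooth) and a general $m$, $d\widetilde f$ is surjective onto $T_{Y,y}$, so ${\rm rank}(\sigma|_{T_{Y,y}})={\rm rank}(\widetilde f^*\sigma|_{\widetilde y})={\rm rank}(\widetilde g^*\sigma_{\widetilde M}|_{\widetilde y})\le\dim\widetilde{\mathcal Z}-r=\dim Y-r$, the $r$-dimensional vertical tangent space of $\widetilde g$ lying in the kernel (here the previous step is used). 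On the other hand $\dim Y=2n-{\rm codim}\,Y\ge 2n-i\ge n$ (recall $i\le r\le n$) and $\sigma$ is non-degenerate, so $\ker(\sigma|_{T_{Y,y}})=T_{Y,y}\cap T_{Y,y}^{\perp\sigma}$ has dimension $\le 2n-\dim Y$, giving ${\rm rank}(\sigma|_{T_{Y,y}})\ge 2\dim Y-2n$. Combining, $\dim Y\le 2n-r$, i.e. ${\rm codim}\,Y\ge r$; but ${\rm codim}\,Z\ge{\rm codim}\,Y$ (as $Z\subset Y$), $r\ge i$, and ${\rm codim}\,Z\le i$ by hypothesis, so all of these equal $i$, and $Z=Y$ since they are irreducible of the same dimension. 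This proves the codimension assertion of (i) and realizes $Z$ as the sweep of a family of $i$-dimensional constant cycle subvarieties $W_m\subset Z$.

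Finally, with $Z=Y$ and $r=i$ the displayed inequalities become equalities, so ${\rm rank}(\sigma|_{T_{Z,z}})=2n-2i$ at a general point of $Z$; since $2n-2i$ is the a priori minimum of this rank and the locus where the minimum is attained is closed in $Z_{reg}$, the equality holds at every $z\in Z_{reg}$. Thus $T_{Z,z}^{\perp\sigma}=\ker(\sigma|_{T_{Z,z}})\subset T_{Z,z}$ is $i$-dimensional, so $Z$ is coisotropic with an $i$-dimensional null-foliation. Each $W_m$ is isotropic (Corollary \ref{coroiso}) of dimension $i$, hence is a leaf of that foliation; in particular a general point of $Z$ lies on exactly one $W_m$, so $f$ is birational, and $\phi:=g\circ f^{-1}\colon Z\dashrightarrow B:=\overline{g(\mathcal Z)}$ is a rational fibration with $\dim B=2n-2i$ and general fibre $W_m$. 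Since $\sigma|_Z$ is a closed $2$-form whose kernel distribution is precisely the relative tangent space of $\phi$, it descends to $\sigma|_Z=\phi^*\sigma_B$, which is the algebraic coisotropy in (i); and (ii) is just the statement that the $W_m$ are $i$-dimensional constant cycle subvarieties of $X$. I expect the generic finiteness of the second step, together with the construction of the family in the first, to be the real work: this is exactly where the hyper-K\"ahler hypothesis enters, the boundedness of orbit dimensions (Corollary \ref{coroiso}) being what makes the ``maximal $r$'' argument valid; the remainder is the essentially formal interplay of Mumford's identity with linear algebra.
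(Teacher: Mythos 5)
Your argument breaks down at its very first structural step, where you write $S_rX=\bigcup_\beta{\rm Sw}_\beta$ and deduce via Baire that $Z\subset\overline{{\rm Sw}_\beta}$ for a single family of $r$-dimensional constant cycle subvarieties. Definition \ref{defifilt} only says that for $z\in S_rX$ the orbit $O_z$ has dimension $\geq r$, i.e.\ there exists an $r$-dimensional subvariety of $X$ all of whose points are rationally equivalent to $z$; nothing forces that subvariety to pass through $z$, let alone to lie in $Z$ (the orbit contains $z$, but $z$ may sit on a low-dimensional piece of it, away from the $r$-dimensional part). So the identity $S_rX=\bigcup_\beta{\rm Sw}_\beta$ --- equivalently, the assertion that through a very general point of $Z$ there passes an $r$-dimensional constant cycle subvariety --- is unjustified, and in fact the existence of such subvarieties through general points of $Z$ and contained in $Z$ is precisely what part (ii) asserts; your step (1) thus assumes a key part of the conclusion. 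The paper is careful on exactly this point: it builds a family $p:K\rightarrow Z'$ over a generically finite cover $\alpha:Z'\rightarrow Z$ together with $f:K\rightarrow X$ such that $f(k)$ is rationally equivalent to $\alpha(p(k))$, so the constant cycle varieties $f(K_k)$ sweep out a variety $Z''=f(K)$ which a priori is different from $Z$; the Mumford identity $f^*\sigma=p^*(\alpha^*\sigma)$ is then exploited twice, first to bound $\dim Z''$ and the rank of $\sigma_{\mid Z_{reg}}$ (yielding ${\rm codim}\,Z=i$ and coisotropy), and second, via an auxiliary $\Gamma\subset K$ generically finite over $Z'$ and dominating $Z''$, to transport the leaves back into $Z$: the constant cycle subvarieties inside $Z$ are the images $q_\Gamma(f_\Gamma^{-1}(f(K_k)))$, not the original $f(K_k)$. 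This correspondence step is the missing idea; it cannot be bypassed by the Baire/Chow-variety reduction as you set it up.

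A secondary, repairable, inaccuracy: ``each $W_m$ is isotropic of dimension $i$, hence is a leaf of that foliation'' is not a valid inference --- an $i$-dimensional isotropic subvariety of a coisotropic $Z$ of codimension $i$ need not be tangent to $\ker(\sigma_{\mid Z})$ (already for $i=1$, every curve in a divisor is isotropic, but not every curve is a leaf of the characteristic foliation). In your own setup the correct justification comes once more from the Mumford identity: vertical tangent vectors of $\tilde g$ lie in $\ker(\tilde f^*\sigma)$, and since $d\tilde f$ is generically surjective onto $T_{Z}$, this places $T_{W_m}$ inside $\ker(\sigma_{\mid T_Z})$, which has dimension $i$, forcing equality. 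With that fix, and granting your step (1), the remaining parts of your argument (the maximal-$r$ generic finiteness trick, the rank computation, the descent of the form to $B$) would indeed go through and are close in spirit to the paper's; but as written the proposal has a genuine gap at the start.
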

\begin{proof} By assumption, for any $z\in Z$, there is a subvariety
$K_z\subset X$ which is contained in $O_z$ and has dimension $i$.
Using the countability of Hilbert schemes, there exists
a generically finite cover
$\alpha: Z'\rightarrow Z\subset X$, and a family $p: K\rightarrow Z'$ of varieties of dimension $i$ over $Z'$,
together with a morphism
 $f:K\rightarrow X$ satisfying the following properties:
\begin{enumerate}
 \item\label{itema} For any $k\in K$, $f(k) $ is rationally equivalent to $\alpha\circ p(k)$ in $X$.

 \item \label{itemb}  The morphism $f$ restricted to the general fiber
 of $p$ is generically finite over (or even birational to) its image in $X$.
\end{enumerate}
 Property \ref{itema} and Lemma \ref{propmum} imply that
 \begin{eqnarray}\label{equaform}
 f^*\sigma= p^*(\alpha^*\sigma) \,\,{\rm in}\,\,H^0(K,\Omega_K^2).
 \end{eqnarray}

 Formula (\ref{equaform}) tells us that the $2$-form $f^*\sigma$
 has the property that for the general point $k\in K$,
 the tangent space $T_{K_k}$ to the fiber $K_k$ of $p$ passing through
 $k$
is  contained in the  kernel of    $f^*\sigma_{\mid k}\in \Omega_{K,k}^2$.
Equivalently, the vector space $f_*(T_{K_k})\subset T_{X,f(k)}$ is contained in the kernel of
the form $\sigma_{\mid f(K)}$ at the point $f(k)$. From now on, let us denote
$Z'':=f(K)\subset X$.
By assumption \ref{itemb}, $f_*(T_{K_k})$ has dimension $i$ for general $k$, and because
$\sigma$ is nondegenerate, this implies that the rank of the map
$f$ at $k$ is at most $2n-i$. By Sard's theorem, it follows
that $Z''$ has dimension $\leq 2n-i$ and that the generic rank of the $2$-form
$\sigma_{\mid Z''}$ is at most $2n-2i$.
Hence the rank of the $2$-form $f^*\sigma=p^*(\alpha^*\sigma)$ on
$K$ is at most $2n-2i$, and as $p$ is dominating and $\alpha$ is generically finite,
this implies that the rank of $\sigma_{\mid Z_{reg}}$ is
$\leq 2n-2i$. Hence $Z$ satisfies ${\rm codim}\,Z\geq i$,
hence in fact ${\rm codim}\,Z=i$, and furthermore the rank of
$\sigma_{\mid Z_{reg}}$ is equal to $2n-2i$, so
that $Z$ is a coisotropic subvariety of $X$.

Let us now prove that $Z$ is algebraically coisotropic. We proved above that the
varieties $f(K_k)\subset Z''$ have their tangent space
contained in the kernel of the form $\sigma_{\mid Z''}$.
Let now
$\Gamma\subset K$ be a subvariety which is generically finite over $Z'$ and
dominates $Z''$.
Such a $\Gamma$ exists since we proved that ${\rm dim}\,Z''\leq 2n-i={\rm dim}\,Z'$.
Denote by
$$f_\Gamma:\Gamma\rightarrow Z'',\,
q_\Gamma:\Gamma\rightarrow Z$$
the restrictions to
$\Gamma$ of $f$ and $\alpha\circ p$ respectively.
Restricting
(\ref{equaform}) to $\Gamma$ gives
\begin{eqnarray}\label{equaformGamma}
 f_\Gamma^*(\sigma_{\mid Z''})= q_\Gamma^*(\sigma_{\mid Z}) \,\,{\rm in}\,\,H^0(\Gamma,\Omega_\Gamma^2).
 \end{eqnarray}
 The varieties $f_\Gamma^{-1}(f(K_k))$ are  tangent to
 the kernel of the form $f_\Gamma^*(\sigma_{\mid Z''})$, hence of the form
 $ q_\Gamma^*(\sigma_{\mid Z})$, which means that their images
 $$q_\Gamma(f_\Gamma^{-1}(f(K_k)))\subset Z$$
  are tangent
 to the kernel of the form $\sigma_{\mid Z}$. As they are (for general $k$) of dimension $\geq i$ because $q_\Gamma$ is generically finite,
 and as ${\rm dim}\,Z=2n-i$,
 one concludes that they are in fact of dimension $i$, and are thus
 algebraic  integral leaves
 of the distribution on $Z_{reg}$ given by
 ${\rm Ker}\,\sigma_{\mid Z}$.
One still needs to explain why this is enough to imply
 that $Z$ is algebraically coisotropic. We already proved that
 $Z$ is swept-out by algebraic varieties $Z_t$ which are $i$-dimensional
 and tangent to the distribution on $Z_{reg}$ given by
 ${\rm Ker}\,\sigma_{\mid Z}$. We just have to construct
 a dominant rational map $Z\dashrightarrow B$ which admits the $Z_t$'s as fibers. However,
 we observe that if $B$ is the Hilbert scheme parameterizing $i$-dimensional subvarieties
 of $Z$ tangent to this distribution along $Z_{reg}$, and $M\rightarrow B$ is the universal family of such subvarieties, the morphism $M\rightarrow X$ is birational since there is  a unique
 leaf of the distribution at any point of $Z_{reg}$. This provides us with the desired
 fibration. This proves (i).

(ii)  We have to prove that the fibers of the coisotropic fibration of $Z$ are
constant cycle subvarieties of $X$ of dimension $i$.
By construction, they are the varieties $q_\Gamma(f_\Gamma^{-1}(f(K_k)))$, $k\in K$.
But $f(K_k)$ is by definition a constant cycle subvariety of dimension $i$ of $X$,
all of whose  points are rationally equivalent  to $\alpha\circ p(k)$,
by  condition  \ref{itema}. It follows from condition \ref{itema} again
that all points in $q_\Gamma(f_\Gamma^{-1}(f(K_k)))$ are rationally equivalent
in $X$ to $f(k)$.

\end{proof}
\subsection{Classes of coisotropic subvarieties \label{sec11}}
This subsection is devoted to the description of
the restrictions on the cohomology classes of coisotropic subvarieties of
a  hyper-K\"ahler manifold $X$ of dimension $2n\geq 4$.
More precisely, we will only study those classes which can be written
as a polynomial in divisor classes and the class $c\in S^2H^2(X,\mathbb{Q})\subset H^4(X,\mathbb{Q})$ defined as follows:
The Beauville-Bogomolov form $q$ on $H^2(X,\mathbb{R})$, which is characterized up to a coefficient
 by the condition
that for any $\lambda\in H^2(X,\mathbb{Q})$,
\begin{eqnarray}\label{eqq} \int_X\lambda^{2n}=\mu_Xq(\lambda)^n
\end{eqnarray}
for some nonzero constant $\mu_X$,
is nondegenerate.
The form $q$  provides an invertible symmetric map
$$ H^2(X,\mathbb{Q})\rightarrow H^2(X,\mathbb{Q})^*$$
with inverse
$$c\in {\rm Hom}_{sym}(H^2(X,\mathbb{Q})^*, H^2(X,\mathbb{Q}))=S^2H^2(X,\mathbb{Q})
\subset H^4(X,\mathbb{Q}).$$
It is also easy to check that
for every nonnegative integer $i\leq n$, there exists a nonzero constant $\mu_{i,X}$ such that
for any $\lambda\in H^2(X,\mathbb{Q})$
\begin{eqnarray}\label{eqqc} \int_Xc^i\lambda^{2n-2i}=\mu_{i,X}q(\lambda)^{n-i}.
\end{eqnarray}
Our goal in this section is to compute the ``coisotropic classes''
which can be written as polynomials $P(c,l_j)$, where $l_j\in {\rm NS}^2(X)$.
The following lemma justifies Definition \ref{defcoisoclass} of a coisotropic class:

\begin{lemm}\label{leisoclass} Let $Z\subset X$ be a codimension $i$ subvariety
and let $[Z]\in H^{2i}(X,\mathbb{Q})$ its cohomology class.
Then $Z$ is coisotropic if and only
\begin{eqnarray}\label{eqcoisoclass}
[\sigma]^{n-i+1}\cup [Z]=0\,\,{\rm in}\,\,H^{2n+2}(X,\mathbb{C}),
\end{eqnarray}
\end{lemm}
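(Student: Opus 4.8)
The plan is to interpret the condition $[\sigma]^{n-i+1}\cup[Z]=0$ pointwise on $Z_{reg}$ via the action of $\sigma$ on the normal bundle, and then recognize the vanishing of a top-degree form as the statement that the rank of $\sigma_{\mid Z}$ drops. First I would reduce to a cohomological computation on $Z$ itself: since $[\sigma]^{n-i+1}\cup[Z]=0$ in $H^{2n+2}(X,\mathbb{C})$ is equivalent, by the projection formula and the nondegeneracy of the Poincar\'e pairing, to the vanishing of the restriction $(\sigma^{n-i+1})_{\mid Z}=0$ in $H^{2n+2}(Z,\mathbb{C})$ — here one uses that $H^{2n+2}(Z,\mathbb{C})$ injects into $H^{2n+2}(X,\mathbb{C})$ via Gysin pushforward composed with the appropriate cap, or more simply that for a class $\beta\in H^{2n+2}(Z)$ one has $j_*\beta = 0$ in $H^{2n+2+2i}(X)$ iff $\beta=0$ by hard Lefschetz on $Z$ paired against powers of an ample class pulled back from $X$. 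Actually the cleanest route: $[\sigma]^{n-i+1}\cup[Z]$ is the pushforward $j_*\big((\sigma^{n-i+1})_{\mid Z}\big)$ where $j:\widetilde Z\to X$ is a resolution, and this pushforward is injective on $H^{\geq 2\dim Z}$ because $\widetilde Z\to j(Z)$ is generically finite and we can cap with a power of an ample divisor. So the condition is $(\sigma_{\mid \widetilde Z})^{n-i+1}=0$ as a holomorphic form on a resolution $\widetilde Z$, equivalently on $Z_{reg}$.

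Next I would translate "$(\sigma_{\mid Z_{reg}})^{n-i+1}=0$" into a rank statement. A holomorphic $2$-form $\omega$ on a complex manifold of dimension $2n-i$ satisfies $\omega^{k+1}=0$ identically if and only if its rank is everywhere $\leq 2k$; but since the rank is upper semicontinuous and $Z_{reg}$ is connected (at least on each component, which suffices), $\omega^{k+1}=0$ forces $\mathrm{rk}\,\omega\leq 2k$ at every point. Apply this with $k = n-i$: we get that $\mathrm{rk}\,\sigma_{\mid Z,z}\leq 2n-2i$ for all $z\in Z_{reg}$. On the other hand, I claim $\mathrm{rk}\,\sigma_{\mid Z,z}\geq 2n-2i$ automatically, for \emph{any} subvariety $Z$ of codimension $i$, because $\sigma$ is nondegenerate on $T_{X,z}$: the kernel of $\sigma_{\mid Z,z}$ is $T_{Z,z}\cap T_{Z,z}^{\perp\sigma}$, and since $\dim T_{Z,z}^{\perp\sigma} = \mathrm{codim}\,Z = i$ (nondegeneracy of $\sigma$), this intersection has dimension $\leq i$, so $\mathrm{rk}\,\sigma_{\mid Z,z}\geq (2n-i)-i = 2n-2i$. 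Combining the two inequalities, $\sigma^{n-i+1}_{\mid Z}=0$ is equivalent to $\mathrm{rk}\,\sigma_{\mid Z,z}=2n-2i$ at every smooth point, i.e.\ the kernel has dimension exactly $i$, which by the dimension count above happens iff $T_{Z,z}^{\perp\sigma}\subset T_{Z,z}$ — precisely the coisotropy condition of Definition \ref{deficoiso}. This gives both implications at once.

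For the converse direction stated in the lemma (coisotropic $\Rightarrow$ the cohomological identity) the same chain runs backwards: coisotropy means $\mathrm{rk}\,\sigma_{\mid Z,z}=2n-2i$ on $Z_{reg}$, hence $(\sigma_{\mid Z_{reg}})^{n-i+1}=0$ as a holomorphic form, hence it extends by zero to a resolution $\widetilde Z$ (a holomorphic form vanishing on a dense open set is zero), hence $j_*\big((\sigma_{\mid\widetilde Z})^{n-i+1}\big)=[\sigma]^{n-i+1}\cup[Z]=0$. The main technical point to be careful about — the step I expect to need the most attention — is the passage from $Z$ to a resolution $\widetilde Z$ and the claim that $j_*:H^{2n+2}(\widetilde Z,\mathbb{C})\to H^{2n+2+2i}(X,\mathbb{C})$ is injective in this range (so that the cohomological vanishing on $X$ genuinely detects the vanishing of the pulled-back holomorphic form on $\widetilde Z$, not just on a divisor); this is where one invokes that $\widetilde Z\to j(Z)$ is generically finite together with the hard Lefschetz theorem on $\widetilde Z$ relative to an ample class pulled back from $X$, or alternatively a direct argument with the decomposition of $[Z]$ and Poincar\'e duality. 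Everything else is linear algebra on $T_{X,z}$ and the elementary fact about ranks of alternating forms and their powers.
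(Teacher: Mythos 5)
Your reduction of coisotropy to the pointwise statement $\mathrm{rk}\,\sigma_{\mid Z,z}=2n-2i$ on $Z_{reg}$, hence to $\sigma^{n-i+1}_{\mid Z_{reg}}=0$, is correct and is exactly the first step of the paper's proof, as is the easy direction (coisotropic $\Rightarrow$ $[\sigma]^{n-i+1}\cup[Z]=j_*\bigl((\sigma_{\mid\widetilde Z})^{n-i+1}\bigr)=0$ by the projection formula). The gap is in the converse, which is the whole content of the lemma: you claim that $j_*$ is injective on the relevant cohomology of a resolution $\widetilde Z$ ``because $\widetilde Z\to j(Z)$ is generically finite, together with hard Lefschetz relative to an ample class pulled back from $X$.'' This fails on two counts. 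First, the pulled-back class $j^*l$ on $\widetilde Z$ is only big and nef (the resolution contracts its exceptional locus), and hard Lefschetz is simply false for big and nef classes, so the invoked mechanism does not exist. Second, and more fundamentally, the class you must detect, $(\sigma_{\mid\widetilde Z})^{n-i+1}$, lives in $H^{2n-2i+2}(\widetilde Z)$, which for $i\geq 2$, $n\geq 2$ is strictly below the top degree $4n-2i$; in that range $\ker\bigl(j_*:H^{k}(\widetilde Z)\to H^{k+2i}(X)\bigr)$ is, by Poincar\'e duality, the orthogonal of $j^*H^{2\dim Z-k}(X)$, which is typically nonzero (already a dimension count kills injectivity: $b_{2n-2i+2}(\widetilde Z)$ can be much larger than $b_{2n+2}(X)$). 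So the step you yourself flag as the one ``needing the most attention'' is not merely unpolished; as justified, it is wrong, and it is precisely the nontrivial implication of the lemma.

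What rescues the argument is not injectivity of $j_*$ but positivity for the particular class at hand, which is of type $(2n-2i+2,0)$: from $j_*\bigl((\sigma_{\mid\widetilde Z})^{n-i+1}\bigr)=0$, pair with $[\overline\sigma]^{n-i+1}\cup l^{\,i-2}$ to get
\[
\int_{\widetilde Z}\sigma^{n-i+1}\wedge\overline{\sigma}^{\,n-i+1}\wedge (j^*\omega_l)^{i-2}=0 ,
\]
and since $(p,0)$-forms are automatically primitive, the integrand is (up to a universal sign) a nonnegative multiple of the volume form, positive wherever $\sigma^{n-i+1}_{\mid Z}\neq 0$ and $j$ is immersive; hence $\sigma^{n-i+1}_{\mid Z_{reg}}=0$. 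This is exactly what the paper does, phrased by cutting $Z$ with $i-2$ general members of $|L|$ (so the case $i=1$ is treated separately as trivial, every divisor being coisotropic, and the case $i\geq2$ by the Hodge--Riemann sign argument on the slice $Z'$). So your outline can be repaired, but only by replacing the claimed Lefschetz-type injectivity with this Hodge--Riemann positivity argument; as written the proof does not go through.
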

where $[\sigma]\in H^2(X,\mathbb{C})$ is the cohomology class of $\sigma$.
\begin{proof} Indeed, $Z$ is coisotropic if and only if the restricted form
$\sigma_{\mid Z}$ has rank $2n-2i$ on $Z_{reg}$,
which is equivalent to the vanishing
\begin{eqnarray}
\label{eqvan1}
\sigma^{n-i+1}_{\mid Z_{reg}}=0\,\,{\rm in}\,\,H^0(Z_{reg},\Omega^2_{ Z_{reg}}).
\end{eqnarray}
 We claim that
this vanishing in turn is equivalent to
the condition
\begin{eqnarray}
\label{eqvan2}[\sigma]^{n-i+1}\cup [Z]=0\,\,{\rm in}\,\,H^{2n+2}(X,\mathbb{C}).
\end{eqnarray}
 Indeed, (\ref{eqvan1}) clearly implies (\ref{eqvan2}). In the other direction, we can assume
 $i\geq2$ since the for $i=1$ there is nothing to prove (all divisor classes are
 coisotropic).
 The vanishing
of $[\sigma]^{n-i+1}\cup [Z]$  in $H^{2n+2}(X,\mathbb{C})$
then implies that  the
cup-product
$l^{i-2}\cup [\sigma]^{n-i+1}\cup[\overline{\sigma}]^{n-i+1}\cup [Z]$
vanishes in  $H^{4n}(X,\mathbb{C})$, where $l$ is the first Chern class
of a very  ample line bundle $L$ on $X$,  so that (\ref{eqvan1}) implies the
vanishing of the integral
$\int_{Z'}  \sigma^{n-i+1}\wedge \overline{\sigma}^{n-i+1}$, where
$Z'\subset Z$ is the complete intersection of $i-2$ general members in $|L|$ and
the integral has to be understood as an integral on a desingularization of $Z'$.
But the form  $\sigma^{n-i+1}_{\mid Z'}\wedge \overline{\sigma}^{n-i+1}_{\mid Z'}$ can be written
(along $Z'_{reg}$) as
$\pm f\,\nu$ where $\nu $ is a volume form on $Z'_{reg}$, and
the continuous function $f$ is real nonnegative.

It follows that  the vanishing of the integral implies that $\sigma^{n-i+1}_{\mid Z'}=0$
and as the tangent space of $Z'$ at a given point
 can be chosen arbitrarily (assuming $L$ ample enough), this implies
that $\sigma^{n-i+1}=0$. (Of course, if $Z$ is smooth, we can directly apply
the second Hodge-Riemann relations to $Z$.)

\end{proof}
We thus make the following definition
\begin{Defi}\label{defcoisoclass} A coisotropic class on $X$ of degree $2i$ is a  Hodge class
$z$
of degree $2i$ which satisfies the condition
\begin{eqnarray}
\label{eqvan3}[\sigma]^{n-i+1}\cup z=0\,\,{\rm in}\,\,H^{2n+2}(X,\mathbb{C}).
\end{eqnarray}
\end{Defi}
The contents of Lemma \ref{leisoclass} is thus that  an {\it effective} class is the class of a coisotropic subvariety if and only if it is coisotropic.
\begin{rema}\label{rema28dec}{\rm It is not known if the class $c$ is algebraic. However it is known to be algebraic
for those $X$ which are deformations of ${\rm Hilb}^n(K3)$ (see \cite{mark}). In general however, the class $c_2(X)$ is of course always algebraic and its projection to $S^2H^2(X,\mathbb{Q})$ is
a nonzero multiple of $c$. (This projection is well-defined, using the canonical
decomposition $H^4(X,\mathbb{Q})\cong S^2H^2(X,\mathbb{Q})\oplus S^{2n-2}H^2(X,\mathbb{Q})^{\perp}$.)
}
\end{rema}
Formula (\ref{eqvan3}) in Definition \ref{defcoisoclass} may give the feeling that the space of coisotropic
classes depends on the period point $[\sigma]\in H^2(X,\mathbb{C})$.
This is not true, as we are going to show, at least if we restrict to classes which can be written as
polynomials $P(c,l_j)$ involving only divisor classes and the class $c$, and $X$ is very general in moduli.

To state the next theorem, we need to introduce some notation.
 Let $H^2(X,\mathbb{Q})_{tr}\subset H^2(X,\mathbb{Q})$
be the orthogonal complement of ${\rm NS}(X)_\mathbb{Q}={\rm Hdg}^2(X,\mathbb{Q})$
with respect to the Beauville-Bogomolov form $q$, and let
$QH^2(X,\mathbb{C})_{tr}\subset H^2(X,\mathbb{Q})_{tr}$ be the quadric defined by $q$.
The $\mathbb{Q}$-vector space $H^2(X,\mathbb{Q})$ splits as the orthogonal
direct sum $$H^2(X,\mathbb{Q})_{tr}\oplus^{\perp}{\rm NS}(X)_\mathbb{Q},$$ and
it is immediate to check that $c\in S^2H^2(X,\mathbb{Q})$ decomposes
as
$$c=c_{tr}+c_{alg},$$
where $c_{tr}\in S^2H^2(X,\mathbb{Q})_{tr}$ and $c_{alg}\in S^2{\rm NS}(X)_\mathbb{Q}$.

If $X$ is a projective hyper-K\"ahler manifold, let $\rho:=\rho(X)$.
 We will say that $X$ is very general if  $X$ corresponds to a very general
point in the family of deformations of $X$ preserving ${\rm NS}(X)$, that is, with
Picard number at least $\rho$.
\begin{theo} \label{theoisoclass} Assume  $X$ is very general; then the following hold.

(i) A Hodge class $z\in {\rm Hdg}^{2i}(X,\mathbb{Q})$ is coisotropic if and only if
\begin{eqnarray}
\label{eqcoisoavecalpha}\alpha^{n-i+1}\cup z=0\,\,{\rm in}\,\,H^{2n+2}(X,\mathbb{C})
\end{eqnarray}
 for any
$\alpha\in QH^2(X,\mathbb{C})_{tr}$.

(ii) If $z=P(c,l_j)$ is a polynomial as above,
$z$ is coisotropic if and only if, for any
$\alpha\in QH^2(X,\mathbb{C})_{tr}$, for any $\beta\in H^2(X,\mathbb{Q})$,
\begin{eqnarray}
\label{eqcoisoavecalphabeta} \alpha^{n-i+1}\cup \beta^{n-1}\cup  z=0\,\,{\rm in}\,\,H^{4n}(X,\mathbb{C})\stackrel{\int_X}{\cong}\mathbb{C}.
\end{eqnarray}

(iii)  If $\rho(X)=1$, the space of coisotropic classes which are polynomials
$P(c,l),\,l\in {\rm NS}(X)$, is of dimension $\geq 1$, for any $0\leq i\leq n$.
If $\rho(X)=2$, it has dimension $\geq i+1$.

(iv)  If $l$ is an ample class on $X$, a nonzero coisotropic class
$$z=\lambda_0l^i+\lambda_1 l^{i-2}c_{tr}+\ldots+\lambda_jl^{i-2j} c_{tr}^j,\,\,j:=\lfloor i/2\rfloor$$
has $\lambda_0\not=0$. In particular, the space of such classes has dimension $1$.
\end{theo}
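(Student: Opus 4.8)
The plan is to work one part at a time, since each part feeds the next. For part (i): by Lemma \ref{leisoclass} (or Definition \ref{defcoisoclass}), coisotropy of a rational Hodge class $z$ is the vanishing $[\sigma]^{n-i+1}\cup z=0$. Since $X$ is very general, the transcendental Hodge structure $H^2(X,\mathbb{Q})_{tr}$ is simple and its Mumford–Tate group acts with large monodromy on the period domain; concretely, the $GL$- (or rather orthogonal-group) orbit of $[\sigma]$ is Zariski-dense in the quadric $QH^2(X,\mathbb{C})_{tr}$. One should observe that the condition ``$\alpha^{n-i+1}\cup z=0$'' is a polynomial (indeed homogeneous) condition on $\alpha$; it holds at $\alpha=[\sigma]$, hence by invariance under the monodromy group — which preserves the algebraic class $z$ and acts transitively enough on the nullcone $QH^2(X,\mathbb{C})_{tr}$ — it holds on all of $QH^2(X,\mathbb{C})_{tr}$. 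Conversely, specializing $\alpha=[\sigma]$ gives back coisotropy. This is the conceptual heart, and I expect the main obstacle here to be stating the density/monodromy input cleanly (one wants: for $X$ very general in its Noether–Lefschetz-type locus, the Zariski closure of the monodromy orbit of $[\sigma]$ in $QH^2(X,\mathbb{C})_{tr}$ is everything), citing the relevant deformation theory of hyper-Kähler manifolds.

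For part (ii): the class $z=P(c,l_j)$ lies in the subring generated by $c$ and $\mathrm{NS}(X)$, so $\alpha^{n-i+1}\cup z \in H^{2n+2}(X,\mathbb{C})$ is again a class built from $\alpha\in H^2_{tr}$ and algebraic classes. The plan is to show that such a class vanishes in $H^{2n+2}$ iff all its pairings $\alpha^{n-i+1}\cup\beta^{n-1}\cup z$ against powers of $\beta\in H^2(X,\mathbb{Q})$ vanish in $H^{4n}(X,\mathbb{C})\cong\mathbb{C}$. The forward direction is trivial. For the converse one uses the hard Lefschetz package on $X$ (so $H^*(X,\mathbb{Q})$ is, via a very ample $l$, a module over $\mathfrak{sl}_2$, and more to the point the perfectness of the cup-product pairing together with the fact that, $X$ being very general, an element of $H^{2n+2}(X,\mathbb{C})$ of the shape ``$\alpha^{n-i+1}\cup(\text{alg.\ class})$'' is detected by pairing against $H^{2n-2}(X,\mathbb{C})$, where the relevant pieces are spanned by $\beta^{n-1}$-type classes); one decomposes with respect to the $S^2H^2$ vs. $S^{2n-2}H^2{}^{\perp}$ splitting mentioned in Remark \ref{rema28dec} and reduces to a nondegenerate pairing. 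The key point is that $\beta^{n-1}\cup\beta'$ for $\beta,\beta'$ ranging over $H^2$ already span enough of $H^{2n}$ to separate the classes in question — this is where Fujiki-type relations \eqref{eqq}, \eqref{eqqc} and hard Lefschetz combine.

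For part (iii): with $\rho=1$ write $z=\sum_{a}\lambda_a l^{i-2a}c^a$ and plug into \eqref{eqcoisoavecalphabeta}; using the Fujiki relations and the decomposition $c=c_{tr}+c_{alg}$, the condition becomes a linear system in the $\lambda_a$ whose number of equations is at most $j+1=\lfloor i/2\rfloor+1$ minus the number of identically-satisfied ones, while the number of unknowns is $j+1$; counting shows a nontrivial solution exists, giving dimension $\geq 1$. For $\rho=2$ one has the extra divisor class, enlarging the unknowns to roughly $(i+1)+\cdots$ many monomials $l^{i-a-2b}m^a c^b$, and the same count yields $\geq i+1$. I would present this as an explicit (but not fully ground-out) dimension count, noting that the hard-Lefschetz-type pairing in (ii) makes the constraint matrix have predictable rank.

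Finally, for part (iv): given an ample $l$ and a \emph{nonzero} coisotropic $z=\lambda_0 l^i+\lambda_1 l^{i-2}c_{tr}+\cdots+\lambda_j l^{i-2j}c_{tr}^{\,j}$, suppose $\lambda_0=0$ and let $k\geq 1$ be minimal with $\lambda_k\neq 0$, so $z=c_{tr}^{\,k}(\lambda_k l^{i-2k}+\lambda_{k+1}l^{i-2k-2}c_{tr}+\cdots)$. Apply the coisotropy criterion \eqref{eqcoisoavecalphabeta} with a well-chosen $\alpha\in QH^2(X,\mathbb{C})_{tr}$ and $\beta=l$: since $c_{tr}\in S^2H^2(X,\mathbb{Q})_{tr}$ behaves like $\sum e_p\otimes e_p$ in a $q$-orthonormal transcendental basis, the leading contribution $\int_X \alpha^{n-i+1}\,l^{n-1}\,l^{i-2k}c_{tr}^{\,k}$ is, up to a nonzero Fujiki constant, a nonzero multiple of $\lambda_k\, q(\alpha')^{\,\star}$-type expressions which one can arrange to be nonzero for suitable $\alpha$ on the quadric (the nondegeneracy of $q$ on $H^2_{tr}$ is what prevents total cancellation), contradicting \eqref{eqcoisoavecalphabeta}. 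Hence $\lambda_0\neq 0$; and since \eqref{eqcoisoavecalphabeta} is then a system of linear conditions on $(\lambda_1,\dots,\lambda_j)$ of maximal rank (the same computation shows each successive $\lambda_k$ is determined by $\lambda_0,\dots,\lambda_{k-1}$), the solution with given $\lambda_0$ is unique up to scale, so the space of such classes is $1$-dimensional. I expect the genuinely delicate step across (iii)–(iv) to be controlling precisely which of the Fujiki-type integrals $\int_X \alpha^{n-i+1}\beta^{n-1}l^{i-2a-2b}m^a c_{tr}^{\,b}$ vanish identically in $\alpha$ and which do not — i.e. pinning down the rank of the constraint matrix — since the clean inequalities in (iii) and the sharp statement in (iv) both hinge on it.
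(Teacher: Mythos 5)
Your overall skeleton (spread the vanishing from $[\sigma]$ to the whole quadric by genericity, reduce to Fujiki-type numerical conditions, count, then a leading-coefficient argument) parallels the paper, but several of the load-bearing steps are missing or wrong. In (i), the mechanism you propose does not work as stated: monodromy does not fix an arbitrary Hodge class $z\in{\rm Hdg}^{2i}(X,\mathbb{Q})$ (and over the local universal deformation there is no monodromy at all), and Zariski-density of a monodromy orbit in $QH^2(X,\mathbb{C})_{tr}$ is a big-monodromy theorem not available for an arbitrary hyper-K\"ahler deformation type. The paper avoids this entirely: for $X$ very general the Mumford--Tate group of $H^2(X,\mathbb{Q})_{tr}$ is the full $SO(H^2_{tr},q)$, the span $\langle QH^2(X,\mathbb{Q})_{tr}^{n-i+1}\rangle$ is then a \emph{simple} sub-Hodge structure, and $\cup z$ is a morphism of Hodge structures, so its kernel, containing $[\sigma]^{n-i+1}\neq0$, is everything. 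In (ii), the reduction from vanishing in $H^{2n+2}$ to vanishing of the pairings against $\beta^{n-1}$ is \emph{not} a consequence of hard Lefschetz plus Poincar\'e duality: Poincar\'e duality only lets you test against all of $H^{2n-2}$, and the whole point is that testing against the image of $S^{n-1}H^2$ suffices. This requires Verbitsky's theorem (cited as \cite{verb}) that the subalgebra of $H^*(X)$ generated by $H^2$ is Gorenstein; you never invoke it. In (iii) the count is not actually carried out: the bounds $\geq 1$ and $\geq i+1$ come from the fact that, after setting $q(\alpha)=q(\alpha,l)=q(\alpha,e)=0$, homogeneity forces the Fujiki polynomial to factor as $q(\alpha,\beta)^{n-i+1}R_1$ with $R_1$ of weighted degree $i-2$ in $q(\beta),q(\beta,l),q(\beta,e)$, so the constraints land in a space of dimension $\dim W^{i-2}$, and the kernel has dimension at least $\dim W^i-\dim W^{i-2}$. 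Your ``equations $\leq j+1$ minus identically-satisfied ones'' does not establish this.

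The most serious failure is (iv). Your test with $\beta=l$ and $\alpha\in QH^2(X,\mathbb{C})_{tr}$ is identically zero: expanding $\int_X\alpha^{n-i+1}l^{n-1+i-2k}c_{tr}^{\,k}$ by the Fujiki matching formula, every pairing involving $\alpha$ is $q(\alpha,\alpha)=0$, $q(\alpha,l)=0$, or, after contracting through copies of $c_{tr}\in S^2H^2_{tr}$, again $q(\alpha,\alpha)$ or $q(e_p,l)=0$; so no choice of $\alpha$ on the quadric produces a nonzero value and no contradiction can be extracted (consistently, the paper's formula in (iii) shows the integral is a multiple of $q(\alpha,\beta)^{n-i+1}$, which vanishes for $\beta=l$). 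The actual proof of (iv) is not a Fujiki computation at all: writing $z=c_{tr}\cup z'$ when $\lambda_0=0$, one needs that $\cup c_{tr}$ is injective on $S^{n-1}H^2(X,\mathbb{C})\subset H^{2n-2}$ (Lemma \ref{propavion}, proved via Verbitsky's description of the kernel of $S^{n+1}H^2\to H^{2n+2}$ as $\langle QH^2\rangle_{n+1}$, orthogonal-group representation theory, and a filtration argument to pass from $c$ to $c_{tr}$), together with the injectivity of $S^*H^2\to H^*$ in degrees $\leq n$ to conclude $z'=0$. Neither ingredient appears in your proposal, and your uniqueness claim rests on the same vanishing computation that is identically zero, so (iv) as written is not salvageable without these inputs.
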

\begin{proof} (i) As $X$ is very general, its period point
$[\sigma]$ is a very general point of $QH^2(X,\mathbb{C})_{tr}$ and
it follows that the Mumford-Tate group $MT(X)$ of the Hodge
structure on $H^2(X,\mathbb{Q})_{tr}$ is equal to the orthogonal
group $SO(H^2(X,\mathbb{Q})_{tr},q)$. Denote by
$<QH^2(X,\mathbb{C})_{tr}^{n-i+1}>$ the complex vector subspace of
$S^{n-i+1}H^2(X,\mathbb{C})_{tr}$ generated by $x^{n-i+1}$ for all
$x$ satisfying $q(x)=0$. This vector space is defined over
$\mathbb{Q}$, that is, it is the complexification
 of a $\mathbb{Q}$-vector subspace
 $$<QH^2(X,\mathbb{Q})_{tr}^{n-i+1}>\subset S^{n-i+1}H^2(X,\mathbb{Q})_{tr},$$ and it is in fact a sub-Hodge structure
of $S^{n-i+1}H^2(X,\mathbb{Q})_{tr}$. This Hodge structure is simple under our assumption. Indeed,
the Mumford-Tate group $MT(X)$ is the full orthogonal
group $SO(H^2(X,\mathbb{Q})_{tr},q)$
and $<QH^2(X,\mathbb{C})_{tr}^{n-i+1}>$ is an irreducible representation of $SO(H^2(X,\mathbb{C})_{tr},q)$. This implies a fortiori that $<QH^2(X,\mathbb{Q})_{tr}^{n-i+1}>$ is an irreducible representation of $SO(H^2(X,\mathbb{Q})_{tr},q)$, and the simplicity of the Hodge structure
follows since by definition of the Mumford-Tate group $MT(X)$, sub-Hodge structures correspond to sub-representations of $MT(X)$. The proof of (i) is now immediate. First of all, by (\ref{eqvan3}),
a Hodge class $z$ of degree $2i$ on $X$
 is a coisotropic class if and only if
\begin{eqnarray}
\label{eqencorecoisosigma}
[\sigma]^{n-i+1}\cup z=0\,\,{\rm in}\,\,H^{2n+2}(X,\mathbb{C}).
\end{eqnarray}
An equivalent way of stating this property is to say that the class
$$[\sigma]^{n-i+1}\in <QH^2(X,\mathbb{C})_{tr}^{n-i+1}>$$ is annihilated
by the morphism of Hodge structures

\begin{eqnarray}
\label{eqecupz}\cup z: <QH^2(X,\mathbb{Q})_{tr}^{n-i+1}>\rightarrow H^{2n+2}(X,\mathbb{Q}).
\end{eqnarray}
As the Hodge structure on the left is simple, this morphism is either injective or identically
$0$, so the vanishing (\ref{eqencorecoisosigma}) is equivalent to the vanishing
of $\cup z$ in (\ref{eqecupz}).

\vspace{0.5cm}

(ii) We know by (i) that
$z$ is coisotropic if and only if $\alpha^{n-i+1}\cup z=0\,\,{\rm in}\,\,H^{2n+2}(X,\mathbb{Q})$
for any $\alpha\in QH^2(X,\mathbb{C})_{tr}$.
On the other hand, as the class $c$ belongs to $S^2H^2(X,\mathbb{Q})$, the class
$z=P(c,l_j),\,l_j\in\,\,{\rm NS}(X)$, belongs to the image of $S^iH^2(X,\mathbb{Q})$ in $H^{2i}(X,\mathbb{Q})$.
We now use the results of \cite{verb} which say that the subalgebra of $H^*(X,\mathbb{Q})$
 generated by $H^2(X,\mathbb{Q})$ is Gorenstein, that is self-dual with respect to Poincar\'e duality.
Hence a class $\alpha^{n-i+1}\cup z$, with $z=P(c,j_j)$, vanishes if and only if its cup-product with any
class in $S^{n-1}H^2(X,\mathbb{Q})\subset H^{2n-2}(X,\mathbb{Q})$ vanishes, which
is exactly statement (ii).

\vspace{0.5cm}

(iii)  As shown by Fujiki \cite{fujiki}, it follows  from formula (\ref{eqqc}) that
for any $l,\,e\in H^2(X,\mathbb{Q})$, and any
polynomial $P(c,l,e)$ of weighted degree $i$, there exists
a polynomial
$R$ depending only on $P$, in the variables
$q(\alpha,l),\,q(\alpha,e),\,q(\alpha),\,q(\beta,l),\,q(\beta,e),\,q(\beta),q(\alpha,\beta)$,
such that
for any $\alpha,\,\beta\in H^2(X,\mathbb{Q})$,
\begin{eqnarray}\label{eqfujita} \int_X\alpha^{n-i+1}\cup \beta^{n-1}\cup P(c,l,e)=R(q(\alpha,l),q(\alpha,e),q(\alpha),q(\beta,l),q(\beta,e),q(\beta),q(\alpha,\beta)).
\end{eqnarray}
Assume now that $\alpha\in QH^2(X,\mathbb{Q})_{tr}$ and $l,\,e$ form
a basis of ${\rm NS}(X)$ (so $\rho(X)=2$). Then
$$q(\alpha)=0,\,\,q(\alpha,e)=0,\,\,
q(\alpha,l)=0$$
 and thus (\ref{eqfujita}) becomes
 \begin{eqnarray}\label{eqfujitaalpha} \int_X\alpha^{n-i+1}\cup \beta^{n-1}\cup P(c,l,e)=R_0(q(\beta,l),q(\beta,e),q(\beta),q(\alpha,\beta)),
\end{eqnarray}
where $R_0$ is the restriction of $R$ to the subspace where the first three coordinates
vanish.
The left hand side is homogeneous of degree $n-i+1$ in $\alpha$
and homogeneous of degree $n-1$ in $\beta$, so we conclude that
the right hand side has to be of the form
$$q(\alpha,\beta)^{n-i+1}R_1(q(\beta,l),q(\beta,e),q(\beta)),$$
where the polynomial $R_1$ has to be homogeneous of degree
$i-2$ in $\beta$, hence $R_1$ has to be of weighted degree $i-2$ in the three variables
$q(\beta,l),\,q(\beta,e)$ of degree $1$ and $q(\beta)$ of degree $2$.
In conclusion, the space of coisotropic classes
$z$ which can be written as polynomials in $c,\,l$ and $e$ is the kernel of
the linear map $P\mapsto R_1$ which sends the space
$W_{2,1,1}^i$ of polynomials of weighted degree
$i$ in the variables $c,\,l,\,e$ to the space of polynomials $W_{2,1,1}^{i-2}$ of weighted degree
$i-2$ in $q(\beta),\,q(\beta,l),\,q(\beta,e)$. Thus
its kernel has dimension $\geq  {\rm dim}\,W_{2,1,1}^i-{\rm dim}\,W_{2,1,1}^{i-2}=i+1$.

The argument when $\rho(X)=1$ is exactly the same, except that there
is only one variable $l$ instead of the two variables $l,\,e$. We conclude
as before that the space of coisotropic classes
$z$ which can be written as polynomials in $c,\,l$
has dimension $\geq {\rm dim}\,W_{2,1}^i-{\rm dim}\,W_{2,1}^{i-2}=1$.

\vspace{0.5cm}

(iv) Writing $z$ as a polynomial in $c_{tr}$ and $l$, the non-vanishing
of the $l^i$ coefficient is equivalent
to the fact that $z$ is not of the form
  $c_{tr}\cup z'$, for some Hodge class $z'$ of degree $2i-4$.  So assume
  that $z=c_{tr}\cup z'$ is coisotropic. One then has
  \begin{eqnarray}\label{eqavion}\sigma^{n-i+1}\cup c_{tr}\cup z'=0
  \,\,{\rm in }\,\,H^{2n+2}(X,\mathbb{C}).
  \end{eqnarray}
  We now have the following lemma
  \begin{lemm}\label{propavion} The cup-product map
$$\cup c_{tr}: H^{2n-2}(X,\mathbb{C})\rightarrow H^{2n+2}(X,\mathbb{C})$$
is injective on the subspace
$ S^{n-1}H^2(X,\mathbb{C})\subset H^{2n-2}(X,\mathbb{C})$.
\end{lemm}
\begin{proof}
We first claim that
the cup-product map
$$\cup c: H^{2n-2}(X,\mathbb{C})\rightarrow H^{2n+2}(X,\mathbb{C})$$
is injective on the subspace
$ S^{n-1}H^2(X,\mathbb{C})\subset H^{2n-2}(X,\mathbb{C})$.
(Note that this is equivalent to saying that it induces
an isomorphism between $S^{n-1}H^2(X,\mathbb{C})\subset H^{2n-2}(X,\mathbb{C}$
and the subspace of $H^{2n+2}(X,\mathbb{C})$ which is the image
of $S^{n+1}H^2(X,\mathbb{C})$.)  The claim follows from the fact that
the kernel of the map
$$S^{n+1}H^2(X,\mathbb{C})\rightarrow H^{2n+2}(X,\mathbb{C})$$
is according to \cite{verb} equal to
$<QH^2(X,\mathbb{C})>_{n+1}$. On the other hand, representation theory of the orthogonal group
shows that the natural map
$$\oplus_k <QH^2(X,\mathbb{C})>_{n+1-2k}\rightarrow S^{n+1}H^2(X,\mathbb{C})$$
which is the multiplication by $c^k$ on $<QH^2(X,\mathbb{C})>_{n+1-2k}$,
is an isomorphism. Writing a similar decomposition
for $S^{n-1}H^2(X,\mathbb{C})$
 makes clear that the image of the multiplication
by $c$ from $S^{n-1}H^2(X,\mathbb{C})$ to $S^{n+1}H^2(X,\mathbb{C})$
 and the subspace $<QH^2(X,\mathbb{C})>_{n+1}$ have their intersection reduced to $0$,
 This proves the claim.

 To conclude the proof, we observe that the intersection pairing $q$ restricted
 to
 $H^2(X,\mathbb{C})_{tr}$ is nondegenerate, and that $c_{tr}\in S^2H^2(X,\mathbb{C})_{tr}$
 is the analogue of the class $c$ for $(H^2(X,\mathbb{C})_{tr},q)$. So Lemma
 \ref{propavion} applies to the
 multiplication maps
 \begin{eqnarray}
 \label{isoeqavion}c_{tr}: S^{k-1}H^2(X,\mathbb{C})_{tr}\rightarrow S^{k+1}H^2(X,\mathbb{C})_{tr}/<QH^2(X,\mathbb{C})_{tr}>_{k+1},
 \end{eqnarray}
 showing they are all injective.

To conclude, we observe that as
$H^2(X,\mathbb{C})=H^2(X,\mathbb{C})_{tr}\oplus \mathbb{C}l$, the space
$S^{n-1}H^2(X,\mathbb{C})$ decomposes as
\begin{eqnarray}\label{eqavion3}S^{n-1}H^2(X,\mathbb{C})=\oplus_{k=0}^{n-1} l^kS^{n-1-k}H^2(X,\mathbb{C})_{tr},
 \end{eqnarray}
 and similarly
 \begin{eqnarray}\label{eqavion4}S^{n+1}H^2(X,\mathbb{C})=\oplus_{k=0}^{n+1} l^kS^{n+1-k}H^2(X,\mathbb{C})_{tr}.
 \end{eqnarray}
 The three spaces $S^{n-1}H^2(X,\mathbb{C})$, $S^{n+1}H^2(X,\mathbb{C})$
 and $ <QH^2(X,\mathbb{C})>_{n+1}\subset S^{n+1}H^2(X,\mathbb{C})$
 are filtered
 by the respective subspaces $$l^k S^{n-1-k}H^2(X,\mathbb{C}),\,\,
 l^k S^{n+1-k}H^2(X,\mathbb{C}),\,\,l^k S^{n+1-k}H^2(X,\mathbb{C})\cap <QH^2(X,\mathbb{C})>_{n+1},$$ and denoting by
 $L^\cdot$ these filtrations, it is easy to check that
 \begin{eqnarray}\label{eqavion5}
 Gr_L^kS^{n-1}H^2(X,\mathbb{C})\cong S^{n-1-k}H^2(X,\mathbb{C})_{tr},\\
 \nonumber
 Gr_L^kS^{n+1}H^2(X,\mathbb{C})\cong S^{n+1-k}H^2(X,\mathbb{C})_{tr},\\
 \nonumber
  Gr_L^k<QH^2(X,\mathbb{C})>_{n+1}\cong <QH^2(X,\mathbb{C})_{tr}>_{n+1-k}.
 \end{eqnarray}
 The multiplication (or cup-product) map by $c_{tr}$ is compatible with the filtrations
 $L^\cdot$ (where we also use the induced filtration on the quotient
 $S^{n+1}H^2(X,\mathbb{C})/<QH^2(X,\mathbb{C})>_{n+1}\subset H^{2n+2}(X,\mathbb{C})$)
 and looking at the identifications (\ref{eqavion5}), we see that it induces between
  the graded pieces
 $Gr_L^kS^{n-1}H^2(X,\mathbb{C})$, $Gr_L^kS^{n+1}H^2(X,\mathbb{C})/<QH^2(X,\mathbb{C})>_{n+1}$
 the isomorphisms
 $$c_{tr}:S^{n-1-k}H^2(X,\mathbb{C})_{tr}\cong S^{n+1-k}H^2(X,\mathbb{C})_{tr}/<QH^2(X,\mathbb{C})>_{n+1-k}$$
 of (\ref{isoeqavion}).

 As the multiplication by $c_{tr}$ induces an isomorphisms on each graded piece, it is an isomorphism.

\end{proof}
Using Lemma \ref{propavion}, (\ref{eqavion}) implies that
\begin{eqnarray}\label{eqavion2}\sigma^{n-i+1}\cup z'=0\,\,{\rm in}\,\, H^{2n-2}(X,\mathbb{C}).
 \end{eqnarray}
 On the other hand, we know by
\cite{verb} that the natural map
$S^*H^2(X,\mathbb{C})\rightarrow H^{*}(X,\mathbb{C})$ is injective in degree
$*\leq n$, so that (\ref{eqavion2}) implies $z'=0$, hence $z=0$. So (iv) is proved.

\end{proof}

\section{The case of $Hilb^n(K3)$ \label{sec1}}
Let $S$ be a smooth surface and let $x_1,\ldots, x_i\in S$
be  $i$ different points.
We then  get
rational maps
$$S^{[n-i]}\dashrightarrow S^{[n]},$$
$$ z\mapsto \{x_1,\ldots,x_i\}\cup z,$$
which is well-defined at the points $z\in S^{[n-i]}$ parameterizing
subschemes of $S$ disjoint from the $x_l$'s.
These maps induce morphisms ${\rm CH}_0(S^{[i]})\rightarrow {\rm CH}_0(S^{[n]})$.
\begin{rema} {\rm If we work with the symmetric products $S^{(k)}$ instead of
 the Hlbert schemes $S^{[k]}$, the indeterminacies of these maps
do not appear anymore. Furthermore, as the Hilbert-Chow map has
rationally connected fibers, ${\rm CH}_0(S^{[k]})={\rm
CH}_0(S^{(k)})$. Finally, the fact that $S^{(k)}$ is a quotient
allows to work with correspondences and Chow groups of $S^{(k)}$
despite  their singularities (see \cite{deca}). Because of this, for
the computations below, we will work  with the symmetric products,
and this allows us to take $x_1=\ldots=x_i$. }
\end{rema}
According to the remark above, we choose now a point $o\in S$
and do $x_1=\ldots=x_i=o$. We denote by $(io):S^{(n-i)}\rightarrow S^{(n)}$ the
map $z\mapsto io+z$.
 The elementary
computations
leading to the following result already appear in \cite{bloch}, \cite{voisinsym},   \cite{moopo},
\cite{shenvial}.
In the following statement, we denote by
$\Sigma_{k-1,k}\subset S^{(k-1)}
\times S^{(k)}$ the correspondence
$$\Sigma_{k-1,k}=\{(z,z')\in S^{(k-1)}
\times S^{(k)},\,z\leq z'\}.$$
These correspondences induce morphisms
$$\Sigma_{k-1,k}^*:{\rm CH}_0(S^{(k)})\rightarrow {\rm CH}_0(S^{(k-1)})$$
where it is prudent here to take Chow groups with $\mathbb{Q}$-coefficients, due to the singularities of $S^{(k)}$.
\begin{prop}\label{prodecompchow} (i) The natural (but depending on $o$) decreasing filtration $N_i$ defined
on ${\rm CH}_0(S^{[n]})={\rm CH}_0(S^{(n)})$ by
\begin{eqnarray}\label{eqnatfilt} N_i{\rm CH}_0(S^{(n)}):={\rm Im}\,((io)_*:{\rm CH}_0(S^{(n-i)})
\rightarrow {\rm CH}_0(S^{(n)}))
\end{eqnarray}
induces a splitting
\begin{eqnarray}\label{eqnatdec}{\rm CH}_0(S^{(n)})=\oplus_{0\leq i\leq n}(io)_*({\rm CH}_0(S^{(n-i)})^0),
\end{eqnarray}
where $${\rm CH}_0(S^{(n-i)})^0:={\rm Ker}\,(\Sigma_{n-i-1,n-i}^*:{\rm CH}_0(S^{(n-i)})\rightarrow
{\rm CH}_0(S^{(n-i-1)})).$$
(ii) This splitting also induces a decomposition of each $N_k$:

\begin{eqnarray}\label{eqnatdecNi}N_k{\rm CH}_0(S^{(n)})=\oplus_{i\geq k}(io)_*({\rm CH}_0(S^{(n-i)})^0),
\end{eqnarray}
(iii) If $b_1(S)=0$, this decomposition gives a decomposition of the Bloch-Beilinson filtration, in the sense
that it is given by projectors $P_i$ acting on ${\rm CH}_0(S^{(n)})$,
the corresponding action on holomorphic forms being given
by
$$P_i^*=Id\,\,{\rm on}\,\,H^{2n-2i,0}(S^{(n)}),\,\,P_i^*=0\,\,{\rm on}\,\,H^{j,0}(S^{(n)}),\,j\not=2n-2i.$$
\end{prop}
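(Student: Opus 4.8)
The plan is to establish the three parts in sequence, with the main computational input being an explicit diagonalization of the correspondences $(io)_*$ and $\Sigma^*_{k-1,k}$ on $0$-cycles of symmetric products, which is classical and traceable to \cite{bloch}, \cite{voisinsym}, \cite{moopo}, \cite{shenvial}.

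\textbf{Step 1: the building-block computation.} First I would record the key identity relating the pushforward $(o)_*:{\rm CH}_0(S^{(k-1)})\to {\rm CH}_0(S^{(k)})$, $z\mapsto o+z$, and the pullback $\Sigma^*_{k-1,k}:{\rm CH}_0(S^{(k)})\to{\rm CH}_0(S^{(k-1)})$. Namely, for a point $z=x_1+\dots+x_k\in S^{(k)}$, one has $\Sigma^*_{k-1,k}(z)=\sum_{l=1}^k(x_1+\dots+\widehat{x_l}+\dots+x_k)$, and $(o)_*\Sigma^*_{k-1,k}(z)=\sum_l (o+x_1+\dots+\widehat{x_l}+\dots+x_k)$. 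The basic lemma, obtained by the standard ``adding a point'' trick of Bloch and of \cite{voisinsym}, is that on $0$-cycles one has a relation expressing, modulo the image of $(o)_*$, the cycle $\Sigma^*_{n-i,n-i+1}\circ (io)_* = i\cdot (\,(i-1)o)_* + (\text{terms landing in }N_i)$; equivalently, the operator $\Sigma^*\circ (o)_*$ acts on ${\rm CH}_0(S^{(k)})$ and satisfies a relation showing that $(o)_*$ is injective and that ${\rm CH}_0(S^{(k)})={\rm CH}_0(S^{(k)})^0\oplus (o)_*{\rm CH}_0(S^{(k-1)})$. I would phrase this as: the endomorphism $\Sigma^*_{k-1,k}(o)_*$ of ${\rm CH}_0(S^{(k-1)})$ is (up to the filtration) a scalar times identity plus a nilpotent-type lower-order term, which lets one split off ${\rm CH}_0(S^{(k)})^0$.

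\textbf{Step 2: parts (i) and (ii).} Granting Step 1, injectivity of $(io)_*$ follows by induction (composition of injective maps $(o)_*$), so $N_i{\rm CH}_0(S^{(n)})\cong{\rm CH}_0(S^{(n-i)})$. The direct sum decomposition (\ref{eqnatdec}) is then an induction on $n$: apply the splitting ${\rm CH}_0(S^{(n)})={\rm CH}_0(S^{(n)})^0\oplus(o)_*{\rm CH}_0(S^{(n-1)})$ and feed in the decomposition of ${\rm CH}_0(S^{(n-1)})$ obtained by the inductive hypothesis, checking that $(o)_*$ carries the piece $(jo)_*({\rm CH}_0(S^{(n-1-j)})^0)$ into $((j+1)o)_*({\rm CH}_0(S^{(n-1-j)})^0)$. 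Part (ii) is then immediate from the construction, since by definition $N_k{\rm CH}_0(S^{(n)})={\rm Im}\,(ko)_*$ and the decomposition of ${\rm CH}_0(S^{(n-k)})$ pushes forward term by term to give exactly $\oplus_{i\ge k}(io)_*({\rm CH}_0(S^{(n-i)})^0)$; one only needs that $(ko)_*$ is injective, already shown.

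\textbf{Step 3: part (iii), the Bloch--Beilinson compatibility.} Here I would first note that the decomposition (\ref{eqnatdec}) is cut out by genuine projectors: the $i$-th projector $P_i$ is, up to normalization, a polynomial in the correspondences $(o)_*$ and $\Sigma^*$, hence induced by an algebraic self-correspondence of $S^{(n)}$ (and thus of $S^{[n]}$). Therefore it acts on $H^{*,0}(S^{(n)})$. To identify this action, recall that for $b_1(S)=0$ the holomorphic forms on $S^{(n)}$ are built from $H^{2,0}(S)$: $H^{2j,0}(S^{(n)})\cong S^j H^{2,0}(S)$ and there are no odd-degree holomorphic forms, so $H^{2n-2i,0}(S^{(n)})\cong S^{n-i}H^{2,0}(S)$. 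The map $(o)_*:{\rm CH}_0(S^{(k-1)})\to{\rm CH}_0(S^{(k)})$ corresponds, on the level of holomorphic forms, to the pullback along a morphism that ``forgets'' the added point, so $(o)^*$ on $H^{2j,0}$ is the natural inclusion $S^{j}H^{2,0}(S)\hookrightarrow$ and $(o)_*$ acts with a shift; dually $\Sigma^*_{k-1,k}$ acts on $H^{2,0}$-symmetric powers as a multiplication/comultiplication. A short representation-theoretic computation then shows that the summand $(io)_*({\rm CH}_0(S^{(n-i)})^0)$ is precisely the part on which the correspondence acts as the identity on $H^{2n-2i,0}$ and as zero on all other $H^{j,0}$, because ${\rm CH}_0(S^{(n-i)})^0$ is exactly the ``primitive'' part killed by $\Sigma^*$, matching the top symmetric power $S^{n-i}H^{2,0}(S)=H^{2n-2i,0}(S^{(n-i)})$ sitting inside $H^{2n-2i,0}(S^{(n)})$. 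Combining, $P_i$ acts as stated. The main obstacle is Step 1 and its careful bookkeeping: making the ``adding a point'' relations precise enough to extract honest projectors (not merely a filtration splitting) and tracking the constants so that the induction in Step 2 closes; once that is in hand, Steps 2 and 3 are formal, the latter modulo the standard identification of $H^{*,0}(S^{(n)})$ for $b_1(S)=0$.
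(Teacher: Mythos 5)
Your skeleton matches the paper's: the adding-a-point relation $\Sigma^*_{k-1,k}\circ(o)_*={\rm Id}+(o)_*\circ\Sigma^*_{k-2,k-1}$ (formula (\ref{eqnsigman-1n})), the splitting ${\rm CH}_0(S^{(k)})={\rm CH}_0(S^{(k)})^0\oplus (o)_*{\rm CH}_0(S^{(k-1)})$, induction for (i)--(ii), and algebraic projectors acting on $H^{*,0}$ for (iii). But the crux you defer to ``careful bookkeeping'' is exactly where the proposal has a genuine gap, in two respects. First, the mechanism you offer --- that $\Sigma^*_{k-1,k}(o)_*$ is a scalar plus a ``nilpotent-type'' lower-order term, so the splitting and the injectivity of $(o)_*$ come formally --- is false as stated: already on ${\rm CH}_0(S)$ the lower-order term $(o)_*\circ\Sigma^*_{0,1}=(o)_*\circ\deg$ is idempotent, not nilpotent (a posteriori it is the ``number operator'', diagonalizable with eigenvalues $0,1,\ldots$), so ${\rm Id}+(o)_*\Sigma^*$ is not invertible for that reason. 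The paper's actual argument is an iteration: an element of ${\rm Im}\,(o)_*\cap{\rm Ker}\,\Sigma^*_{n-1,n}$ is shown to lie in ${\rm Im}\,(ko)_*$ for larger and larger $k$, with controlled coefficients, until the symmetric products bottom out and the element vanishes. Second, you never address the surjectivity half of the splitting, i.e. why ${\rm Im}\,(o)_*+{\rm Ker}\,\Sigma^*_{n-1,n}$ is all of ${\rm CH}_0(S^{(n)})$. In the paper this is where a real construction enters: a point of $S^n$ is expanded as $pr_1^*(x_1-o)\cdot\ldots\cdot pr_n^*(x_n-o)$ plus cycles having a factor $o$, and the image in $S^{(n)}$ of the first term, the $*$-product $(x_1-o)*\cdots*(x_n-o)$, is checked to be annihilated by $\Sigma^*_{n-1,n}$.

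That same construction is also what makes (iii) immediate: the projector onto ${\rm Ker}\,\Sigma^*_{n-1,n}$ is written explicitly as $x_1+\cdots+x_n\mapsto(x_1-o)*\cdots*(x_n-o)$, visibly induced by a correspondence, annihilating ${\rm Im}\,(o)_*$, acting as the identity on $H^{2n,0}(S^{(n)})={\rm Sym}^nH^{2,0}(S)$ and as zero on holomorphic forms of lower even degree. Your alternative description of the projectors as polynomials in $(o)_*$ and $\Sigma^*$ is true a posteriori (Lagrange interpolation in the number operator), but proving that this operator is diagonalizable with the right eigenvalues is essentially equivalent to proving (i), so as written it is circular rather than an independent route. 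Once Step 1 is repaired along these lines, the rest of your plan does go through; note also that (ii) needs no injectivity of $(io)_*$ at all --- it follows from the definition of $N_k$, the decomposition of ${\rm CH}_0(S^{(n-k)})$, and the directness already established in (i).
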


\begin{proof}
(i)  The proof
is based on the
following easy formula (see \cite{voisinsym}):
\begin{eqnarray}\label{eqnsigman-1n} \Sigma_{n-1,n}^*\circ {(o)}_*=Id + (o)_*\circ \Sigma_{n-2,n-1}^*:{\rm CH}_0(S^{(n-1)})\rightarrow {\rm CH}_0(S^{(n-1)}).
\end{eqnarray}
Note that in this formula, the first $(o)_*$ belongs
to ${\rm Hom}\,({\rm CH}_0(S^{(n-1)}),{\rm CH}_0(S^{(n)}))$ and the second one
belongs to ${\rm Hom}\,({\rm CH}_0(S^{(n-2)}),{\rm CH}_0(S^{(n-1)}))$.
We deduce from this formula that if
$z=(o)_*(z')\in {\rm Ker}\,\Sigma_{n-1,n}^*\cap {\rm Im}\,(o)_*$, then
\begin{eqnarray}\label{eqnsigman-2n}z'=-(o)_*\circ \Sigma_{n-2,n-1}^*z'.
\end{eqnarray} Thus $z'\in{\rm Im}\,(o)_*$.
Applying
$\Sigma_{n-2,n-1}^*$ to both sides of  equality (\ref{eqnsigman-2n})
and formula (\ref{eqnsigman-1n}), one gets
$$\Sigma_{n-2,n-1}^*z'=-\Sigma_{n-2,n-1}^*z'-(o)_*(\Sigma_{n-3,n-2}^*\circ \Sigma_{n-2,n-1}^*z'),$$
and applying $(o)_*$ again,
one gets
$$ z'=-(o)_*\circ \Sigma_{n-2,n-1}^*z'=\frac{1}{2}(2o)_*(\Sigma_{n-3,n-2}^*\circ \Sigma_{n-2,n-1}^*z'), $$
so
that in fact $z'\in {\rm Im}\,(2o)_*$. Iterating this argument, we finally conclude that
$z=0$.

On the other hand, any
cycle in $S^{(n)}$ is the image of a  $0$-cycle in $S^n$ and each point of
$S^n$ can be written (as a $0$-cycle of $S^n$) as
\begin{eqnarray}
\label{eqpourpointinSn}(x_1,\ldots,x_n)=pr_1^*(x_1-o)\cdot \ldots\cdot pr_n^*(x_n-o)+z'
\end{eqnarray}
where $z'=\sum_in_iz'_i$ is a cycle of $S^n$ supported on points
$z'_i=(z'_{i,1},\ldots,z'_{i,n})$ having at least one
factor equal to $o$. Projecting to $S^{(n)}$, we conclude
that every $0$-cycle in $S^{(n)}$ is the sum of
a $0$-cycle in ${\rm Im}\,(o)_*$, and of a $0$-cycle
$(x_1-o)*\ldots*(x_n-o)$, where the $*$-product used here is the external product
 appearing in (\ref{eqpourpointinSn}) followed by the projection to
 $S^{(n)}$.
 It is immediate to check that $(x_1-o)*\ldots*(x_n-o)$ is annihilated by
 $\Sigma_{n-1,n}^*$, and thus we get the existence of a decomposition
 \begin{eqnarray}
\label{eqpourpreuvepropdec}{\rm CH}_0(S^{(n)})={\rm Im}\,(o)_*+{\rm Ker}\,\Sigma_{n-1,n}^*,
\end{eqnarray}
 this decomposition being in fact a direct sum decomposition by the previous argument.
 Using the decomposition (\ref{eqpourpreuvepropdec}), (i) is proved by induction.

(ii) This follows directly from the definition of $N_k$ and the decomposition
(\ref{eqnatdec}) applied to $S^{(n-k) }$.

(iii) We work by induction on $n$. We observe that the proof of
(i) shows that $${\rm Ker}\,(\Sigma_{n-1,n}^*:{\rm CH}_0(S^{(n)})\rightarrow
{\rm CH}_0(S^{(n-1)}))$$
identifies for $n>0$ to the image
of the map
$$*_n: {\rm Sym}^n({\rm CH}_0(S)_0)\rightarrow {\rm CH}_0(S^{(n)})$$
induced by the $*$-product, where ${\rm CH}_0(S)_0$ denotes the
group of $0$-cycles of degree $0$. Furthermore, we have a projector
$P_n$ from ${\rm CH}_0(S^{(n)})$ to ${\rm Im}\,*_n$, which to
$x_1+\ldots +x_n$ associates the cycle $(x_1-o)*\ldots*(x_n-o)$.
This projector annihilates ${\rm Im}\,(o_*:{\rm CH}_0(S^{(n-1)})
\rightarrow {\rm CH}_0(S^{(n)}))$ and it is thus the projector on
the summand ${\rm Ker}\,(\Sigma_{n-1,n}^*)$ in the decomposition
(\ref{eqpourpreuvepropdec}). Finally, we note that $P_n$ acts as the
identity on the space $H^{2n,0}(S^{[n]})={\rm Sym}^n H^{2,0}(S)$ and
as $0$ on the spaces of holomorphic forms of even degree $<2n$.

\end{proof}
\begin{rema}\label{remanoodd}{\rm Under the assumption made in (iii), $S^{(n)}$ (or rather
$S^{[n]}$) has no nonzero odd degree holomorphic form. For this
reason, the Bloch-Beilinson filtration on ${\rm CH}_0(S^{[n]})$
jumps only in even degree, that is $F^{2j}_{BB}{\rm
CH}_0(S^{[n]})=F^{2j-1}_{BB}{\rm CH}_0(S^{[n]})$. It is thus  more
natural in this case and also in the case of hyper-K\"ahler
varieties to work with the filtration ${F'} ^i_{BB}{\rm
CH}_0(S^{[n]}):=F^{2i}_{BB}{\rm CH}_0(S^{[n]})$, whose graded pieces
are governed by the $(2i,0)$-forms.}
\end{rema}
\begin{rema} \label{remafiltBB} {\rm One may prefer to use Proposition
\ref{prodecompchow}, (ii) as giving a construction of the
Bloch-Beilinson filtration ${F'} ^i_{BB}$ on ${\rm CH}_0(S^{[n]})$,
where $S$ is any surface with $q=0$. One remark is that, putting
$${F'}^i_{BB}{\rm CH}_0(S^{[n]})={F'}^i_{BB}{\rm CH}_0(S^{(n)}):=\oplus_{k\leq n-i}(ko)_*({\rm CH}_0(S^{(n-k)})^0)$$
the filtration $F'_{BB}$ does not depend on the choice of the point $o$.
}

\end{rema}
With this notation,   Proposition \ref{prodecompchow}, (ii) implies that
the fitrations $N$ and $F'_{BB}$ are opposite, in the sense
that the natural composite map
$$N_i{\rm CH}_0(S^{[n]})\rightarrow {\rm CH}_0(S^{[n]})\rightarrow {\rm CH}_0(S^{[n]})/{F'}^{n-i+1}_{BB}{\rm CH}_0(S^{[n]})$$
is an isomorphism. Let now $S$ be a projective $K3$ surface and let
$o_S\in{\rm CH}_0(S)$ be the canonical $0$-cycle of degree $1$ on
$S$ which is introduced in \cite{beauvoisin}. We choose for $o$ any
point representing the cycle $o_S$ and conclude that the filtration
$N_\cdot$ appearing in Proposition \ref{prodecompchow} is
canonically defined on ${\rm CH}_0(S^{[n]})$.

The following is obtained by reinterpreting Theorem 2.1 of \cite{voisinlaz}:
\begin{theo} \label{theofiltcomp} Let $X=S^{[n]}$. Then the filtration $N_\cdot$ introduced above and the filtration
$S_\cdot$ introduced in Definition \ref{defifilt} coincide on ${\rm CH}_0(X)$.
\end{theo}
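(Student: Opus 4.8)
The plan is to show the two filtrations $N_\cdot$ and $S_\cdot$ coincide on ${\rm CH}_0(S^{[n]})$ by proving the two inclusions $N_i{\rm CH}_0(S^{[n]})\subset S_i{\rm CH}_0(S^{[n]})$ and $S_i{\rm CH}_0(S^{[n]})\subset N_i{\rm CH}_0(S^{[n]})$ separately. The inclusion $N_i\subset S_i$ is the elementary direction: if $z\in N_i{\rm CH}_0(S^{(n)})$, then $z=(io)_*(z')$ for some $z'\in{\rm CH}_0(S^{(n-i)})$, so $z$ is supported on points of the form $io+w$ with $w\in S^{(n-i)}$. Such a point, viewed in $S^{(n)}=S^{[n]}$, has an orbit under rational equivalence in $S^{[n]}$ of dimension at least $i$: indeed the $i$-tuple of marked points $o$ can be moved along any constant cycle subvariety of $S$ through $o$, and since $o$ represents the canonical zero-cycle $o_S$, the point $o$ itself is a constant cycle "subvariety" but we need an $i$-dimensional family — this comes from varying the $i$ copies independently over the $i$-dimensional rational curve $C\subset S$ (a rational curve through $o$, whose points are all rationally equivalent to $o_S$ by \cite{beauvoisin}). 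More precisely $\{x_1+\dots+x_i+w : x_j\in C\}$ is an $i$-dimensional subvariety of $S^{[n]}$ all of whose points are rationally equivalent to $io+w$ in $S^{[n]}$, so every generator of $N_i{\rm CH}_0(S^{[n]})$ lies in $S_iX$, whence the inclusion.

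For the reverse inclusion $S_i\subset N_i$, I would invoke Theorem 2.1 of \cite{voisinlaz}, which is the substantive input. The point is that a generator of $S_i{\rm CH}_0(S^{[n]})$ is the class of a point $\xi\in S^{[n]}$ whose orbit $O_\xi$ (for rational equivalence in $S^{[n]}$) has dimension $\geq i$. Using the inclusion $O_\xi\subset O_\xi^S$ noted in the excerpt — where $O_\xi^S$ is the orbit of the underlying zero-cycle of $\xi$ on $S$ for rational equivalence in $S$ — together with the reinterpretation \eqref{eqfiltogK3} of O'Grady's filtration, one deduces that the zero-cycle on $S$ associated to $\xi$ lies in $S_{OG,i}{\rm CH}_0(S)$, i.e. it is rationally equivalent in $S$ to $io_S+z'$ with $z'$ effective of degree $n-i$. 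Then the same relation holds a fortiori in $S^{[n]}$ (rational equivalence in $S$ implies rational equivalence in $S^{[n]}$, via $\Sigma_{\cdot,\cdot}$-type correspondences, or more elementarily since $S^{(n)}$ receives cycles from $S^n$ compatibly), so $[\xi]=(io)_*[z']\in N_i{\rm CH}_0(S^{[n]})$. Summing over generators gives $S_i{\rm CH}_0(S^{[n]})\subset N_i{\rm CH}_0(S^{[n]})$.

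The main obstacle is the dimension-count in the direction $S_i\subset N_i$: a priori the orbit $O_\xi$ of $\xi$ in $S^{[n]}$ could be large for reasons having nothing to do with the splitting of the underlying zero-cycle on $S$ into $io_S$ plus an effective part — that is, one must rule out that $O_\xi$ is big while $O_\xi^S$ is small. This is precisely why the inclusion $O_\xi\subset O_\xi^S$ is only one-sided and why one genuinely needs the deep comparison of \cite{voisinlaz}: Theorem 2.1 there shows that on a $K3$ surface the two a priori different filtrations — the one by dimension of the orbit in $S$ and O'Grady's filtration by splitting off multiples of $o_S$ — agree, and hence the dimension of $O_\xi^S$ controls the $N$-level, and then one checks that $\dim O_\xi^S\geq \dim O_\xi\geq i$ suffices to conclude. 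The remaining work is bookkeeping: translating between $S^{[n]}$, $S^{(n)}$, and $S^n$ (using that the Hilbert–Chow morphism has rationally connected fibres so induces an isomorphism on ${\rm CH}_0$), and checking that the marked-point maps $(io)_*$ and the orbit description match up with O'Grady's $S_{OG,\cdot}$ under \eqref{eqfiltogk31}.
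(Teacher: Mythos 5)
Your inclusion $N_i\subset S_i$ is essentially the paper's easy direction (move the $i$ copies of $o$ along a constant cycle curve $C$ and push the resulting rational equivalences in $S$ forward under the morphisms $t\mapsto t+w$ from $S$ to $S^{(n)}$), and it is fine apart from loose phrasing. The reverse inclusion, however, rests on a step that is false: you claim that once the underlying $0$-cycle of $\xi$ is rationally equivalent in $S$ to $io_S+z'$, ``the same relation holds a fortiori in $S^{[n]}$'' because rational equivalence in $S$ implies rational equivalence in $S^{[n]}$. This implication fails, and it is exactly the subtlety the theorem addresses. For instance, if $x+y\equiv o_S+y'$ in ${\rm CH}_0(S)$, the difference of the classes of the points $x+y$ and $o+y'$ of $S^{(2)}$ in ${\rm CH}_0(S^{(2)})$ equals $(x-o)*(y-o)$, which is in general nonzero (by Mumford's theorem and the decomposition of Proposition \ref{prodecompchow}); were your implication true, the filtration $N_\cdot$ of (\ref{eqfiltog}) would essentially trivialize and the statement would be empty. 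Relatedly, the input you actually invoke, namely the reinterpretation (\ref{eqfiltogK3}) of O'Grady's filtration (Theorem 1.4 of \cite{voisinlaz}), only concerns rational equivalence in $S$, so it cannot by itself yield membership in $N_i{\rm CH}_0(S^{[n]})$, which is defined via rational equivalence in $S^{(n)}$. (Your ``main obstacle'' paragraph is also inverted: since $O_\xi\subset O_\xi^S$, a large $O_\xi$ automatically forces a large $O_\xi^S$; the genuine difficulty is that information about $O_\xi^S$, i.e.\ about equivalence in $S$, does not transfer back to ${\rm CH}_0(S^{[n]})$.)

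The paper's proof uses a different, geometric statement from \cite{voisinlaz} (Theorem 2.1 and Variant 2.4): if $W_x\subset S^{[n]}$ has dimension $i$ and all the degree $n$ cycles of $S$ it parameterizes are rationally equivalent in $S$, then the image of $W_x$ in $S^{(n)}$ meets $C^{(i)}+S^{(n-i)}$ for some constant cycle curve $C$. Two further ingredients, absent from your write-up, then close the argument: (a) the hypothesis $x\in S_iX$ says that all points of $W_x$ are rationally equivalent to $x$ \emph{in} $S^{[n]}$, so the intersection point $z\in C^{(i)}+S^{(n-i)}$ is rationally equivalent to $x$ in $S^{(n)}$ --- this is where equivalence in the Hilbert scheme, rather than in $S$, is produced; (b) for this particular point $z=z_1+z_2$, with $z_1$ of degree $i$ supported on the single constant cycle curve $C$, the equivalences $x_j\equiv o_S$ in $S$ \emph{do} push forward to $S^{(n)}$ (replace the points of $z_1$ by $o$ one at a time via the morphisms $t\mapsto t+w$), giving $z\equiv io_S+z_2$ in $S^{(n)}$ and hence $[x]\in N_i{\rm CH}_0(S^{[n]})$. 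Thus the passage from $S$-equivalence to $S^{(n)}$-equivalence is carried out only for this special point, never for an arbitrary cycle; without this mechanism your proposed proof has a genuine gap.
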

\begin{proof} Let $x\in S_iX$. This means by definition
that there exists a subvariety $W_x\subset X$ which is of dimension
at least $i$, such that all points in $W_x$ are rationally
equivalent to $x$ in $X$. A fortiori, the  degree $n$ effective
$0$-cycles of  $S$ parameterized by $W_x$ are constant in ${\rm
CH}_0(S)$, since they are obtained by applying the universal
subvariety $\Sigma_n\subset S^{[n]}\times S$, seen as a
correspondence between $S^{[n]}=X$ and $S$, to the points of $W_x$.
We now apply the following result of \cite{voisinlaz} (Theorem 2.1
and Variant 2.4).
\begin{theo} Let $S$ be a projective $K3$ surface
and let $Z\subset S^{[n]}$ be a subvariety of dimension $i$, such that all the degree
$i$ effective $0$-cycles of $S$
parameterized by $Z$ are rationally equivalent in $S$.
Then for some constant cycle curve $C\subset S$, the image of $Z$ in $S^{(n)}$ intersects
$C^{(i)}+S^{(n-i)}\subset S^{(n)}$.

\end{theo}
We apply this result to $W_x$ and conclude that the image $\overline{W}_x$ of $W_x$
in $S^{(n)}$ intersects
$C^{(i)}+S^{(n-i)}\subset S^{(n)}$ in a point $z$. As all points in $W_x$ are rationally equivalent to
$x$ in $X$, the image $\overline{x}$ of $x$ in $S^{(n)}$ is rationally equivalent
 in $S^{(n)}$
to $z\in C^{(i)}+S^{(n-i)}$. But for any such point, which
is of the form
$z=z_1+z_2$ with $z_1$ effective  of degree  $i$ supported in $C$, $z_2\in S^{(n-i)}$, we
have $z_1=\sum_j x_j, \,x_j\in C\subset S$, hence $x_j$ is rationally equivalent
to $o_S$ in $S$, and thus $\sum_j x_j+z_2$ is rationally equivalent in $S^{(n)}$ to
the point $io_S+z_2$. We conclude that
$z$, hence also $\overline{x}$, is rationally equivalent  in $S^{(n)}$ to
a point in  the image of the map $io_S$,  so that its class in
${\rm CH}_0(S^{(n)})={\rm CH}_0(S^{[n]})={\rm CH}_0(X)$
belongs to
$S_i{\rm CH}_0(X)$.

The reverse inclusion is obvious since for any constant
cycle curve $C\subset S$, a point $z=io_S+z',\,z'\in S^{(n-i)}$ in ${\rm Im}\,(io_S:S^{(n-i)}\rightarrow
 S^{(n)})$ contains in its orbit the $i$-dimensional subvariety
 $C^{(i)}+z'$ which lifts to an $i$-dimensional subvariety
 of $X$, so that any lift of $z$ to $X$ belongs to $S_iX$.

\end{proof}

This result suggests  that the filtration $S_\cdot$, which is
defined for any  variety, could be  in the case of general  hyper-K\"ahler manifolds
the natural substitute
for the filtration $N_\cdot$ (which was defined only for
Hilbert schemes of surfaces).   We will formulate
this more explicitly in Section \ref{sec3}.

\section{Conjectures on the Chow groups of hyper-K\"ahler varieties\label{sec3}}

Let $X$ be a  hyper-K\"ahler projective manifold. We have the notion
of coisotropic class introduced in Definition \ref{defcoisoclass}.
We proved in  Section \ref{sec2} that the class of a codimension $i$
subvariety $Z$ of $X$ contained in $S_iX$ is coisotropic (see
Theorem \ref{propcoiso} which proves a stronger statement). The
computations made in Section \ref{sec11} (see Theorem
\ref{theoisoclass}) show that nonzero  coisotropic Hodge classes
always exist (and even nonzero  coisotropic algebraic classes,
assuming the algebraicity of the class $c$, see Remark
\ref{rema28dec}). Let us state the following more precise version of
Conjecture \ref{conjmain}:
\begin{conj}\label{conjgen} For any $X$ as above and any $i\leq n$, the space of coisotropic classes
of degree $2i$ is generated over
$\mathbb{Q}$ by classes of codimension $i$ subvarieties $Z$ of $X$ contained
in $S_iX$.
\end{conj}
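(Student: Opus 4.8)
The plan is to treat the two halves of the asserted equality of spans separately. That the class of a codimension-$i$ subvariety $Z\subset S_iX$ is a coisotropic class is already settled: by Theorem \ref{propcoiso}(i), $Z$ has codimension exactly $i$ and its smooth locus is algebraically coisotropic, hence coisotropic, so by Lemma \ref{leisoclass} one gets $[\sigma]^{n-i+1}\cup[Z]=0$, which is precisely the condition of Definition \ref{defcoisoclass}. So the whole content is the \emph{spanning} (surjectivity) direction: every coisotropic Hodge class of degree $2i$ is a $\mathbb{Q}$-linear combination of such $[Z]$. I would first reduce, for $X$ very general, to the polynomial coisotropic classes $P(c,l_j)$ controlled by Theorem \ref{theoisoclass}: when $\rho(X)=1$, parts (iii) and (iv) there show this space is exactly $1$-dimensional, spanned by a class with nonzero $l^i$-coefficient, and when $\rho(X)=2$ it has dimension $\geq i+1$. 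In these cases the target is small, and one ``only'' has to exhibit the corresponding number of independent effective codimension-$i$ cycles lying in $S_iX$.

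I would then produce these cycles by hand in the three families of Section \ref{sec4}, beginning with $X=S^{[n]}$. The reverse-inclusion argument in the proof of Theorem \ref{theofiltcomp} already shows that for every constant cycle curve $C\subset S$ and every $z'\in S^{(n-i)}$, the lift to $S^{[n]}$ of $C^{(i)}+S^{(n-i)}\subset S^{(n)}$ is a codimension-$i$ subvariety contained in $S_iX$. It then remains to (a) compute the cohomology class of this subvariety in $H^{2i}(S^{[n]},\mathbb{Q})$ in terms of the curve class $[C]\in\mathrm{NS}(S)$, the canonical class $o_S$, and the $*$-product structure on $H^*(S^{[n]})$, and (b) check that, as $[C]$ ranges over the constant-cycle curve classes on $S$ --- which span $\mathrm{NS}(S)_\mathbb{Q}$, for instance because rational curves do --- these classes fill up the space of polynomial coisotropic classes, matching the dimension counts of Theorem \ref{theoisoclass}. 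For the Fano variety of lines $F(Y)$ of a cubic fourfold and for the $8$-folds of \cite{LLS} one runs the analogous computation with the families of constant-cycle subvarieties constructed there (lines through a fixed point and the surfaces they sweep out, and their transports through the parametrization by $F(Y)$ announced in the introduction), again reading off the classes and comparing dimensions.

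The third ingredient is deformation. The condition $[\sigma]^{n-i+1}\cup z=0$ is purely Hodge-theoretic and locally constant in families preserving $\mathrm{NS}$, so the space of coisotropic classes does not vary along such families; thus, if the subvarieties $Z\subset S_iX$ themselves deform --- which is exactly what Charles and Pacienza \cite{chapa} prove for deformations of $S^{[n]}$ and $S^{[2]}$, and Lin \cite{lin} for Lagrangian fibrations --- then verifying the conjecture on one member of a deformation class propagates it. The scheme is therefore: prove the spanning statement by explicit cycle computation on the most special member of each accessible deformation family, and spread out.

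The main obstacle is the spanning step in full generality, and it has two faces. First, one needs \emph{enough} codimension-$i$ subvarieties inside $S_iX$: this is essentially the existence half of Conjecture \ref{conjmain} (that $S_iX$ contains subvarieties of dimension $2n-i$), which is itself open outside the families above. Second, even granting their existence, showing their classes span is a genuine effectivity statement: for $X$ not of $K3^{[n]}$-type the class $c$ is not even known to be algebraic (Remark \ref{rema28dec}), and there may be Hodge classes of degree $2i$ outside $\mathbb{Q}[c,\mathrm{NS}(X)]$, so the conjecture incorporates a Hodge-conjecture-type assertion together with the extra demand that the effective representatives be supported on $S_iX$. My expectation is that the deformation-plus-computation route settles the $S^{[n]}$, $F(Y)$ and LLS cases unconditionally, while the general statement stays out of reach for precisely these two reasons.
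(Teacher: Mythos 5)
The statement you are asked about is Conjecture \ref{conjgen}: the paper does not prove it, and offers only partial evidence, so there is no proof to measure your argument against. What the paper actually establishes is (a) the easy direction you correctly dispose of --- a codimension-$i$ subvariety contained in $S_iX$ is coisotropic, by Theorem \ref{propcoiso} together with Lemma \ref{leisoclass} --- and (b) the special case of degree-$4$ coisotropic classes on $S^{[n]}$ for a very general $K3$ surface (Proposition \ref{propcoisigen}), plus the remark that the Fano variety of lines of a general cubic fourfold satisfies the conjecture via Proposition \ref{proramavoi}. Your proposal is, by your own admission, a program rather than a proof: the spanning direction is left open, and that is precisely the content of the conjecture. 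So the proposal cannot be accepted as a proof of the statement; at best it should be compared with the paper's evidence in Section \ref{sec4}, with which it substantially overlaps (reduction of the target space via Theorem \ref{theoisoclass}, explicit constant-cycle and coisotropic constructions on $S^{[n]}$, the Fano variety of lines).

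Two concrete points where your sketch underestimates what even the partial result requires. First, your reduction ``to the polynomial coisotropic classes $P(c,l_j)$'' is not available for $S^{[n]}$ with $n\geq 3$: there the degree-$4$ Hodge classes are strictly larger than the image of $S^2H^2$, containing classes supported on the exceptional divisor and the classes of the strata $E_\mu$ with $l(\mu)=n-2$. The paper's proof of Proposition \ref{propcoisigen} has to treat these separately, using the de Cataldo--Migliorini description of $H^4(S^{[n]})$ and exhibiting each extra class as the class of a coisotropic subvariety contained in $S_2S^{[n]}$ (constructions 1) and 1bis)); without this step the dimension count against Theorem \ref{theoisoclass} proves nothing. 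Second, your deformation step does not propagate the conjecture: the coisotropy condition is indeed Hodge-theoretic and constant in families preserving $\mathrm{NS}$, but the space of Hodge classes of degree $2i$ can jump at special members, and what must deform are the algebraic representatives inside $S_iX$ --- exactly the open existence problem (Conjecture \ref{conjmain}), for which \cite{chapa} covers only $i=1$ for deformations of $S^{[n]}$ and $i=2$ for deformations of $S^{[2]}$. So the ``compute on a special member and spread out'' scheme does not yield the conjecture even for the deformation classes you name, and the general statement remains, as in the paper, conjectural.
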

\begin{rema}{\rm It might even be true, as suggested by the work of Charles and Pacienza
\cite{chapa}, that the space of coisotropic classes
of degree $2i$ is generated over
$\mathbb{Q}$ by classes of codimension $i$ subvarieties $Z$ of $X$ which are
swept-out by $i$-dimensional rationally connected varieties.}
\end{rema}
\begin{rema}{\rm There are three different problems hidden in
Conjecture \ref{conjgen}:

1) The Hodge conjecture: one has to prove that coisotropic Hodge
classes are algebraic.

2) The existence problem for coisotropic subvarieties: one has to
prove that the classes of coisotropic subvarieties  generate the space
of coisotropic classes. As the previous one, this problem does not
appear for divisors, as all divisors are coisotropic.

3) The existence problem for algebraically coisotropic subvarieties,
and even algebraically coisotropic subvarieties obtained as
codimension $i$ components of $S_iX$. The last problem is unsolved
even for divisors, but there are progresses in this case (for
example the problem is solved by Charles-Pacienza \cite{chapa} if $X$ is a
deformation of ${\rm Hilb}(K3)$).}
\end{rema}
 Let us prove the following conditional result.
\begin{theo}\label{theocondcond} Assume $X$ satisfies Conjecture \ref{conjgen}. Then
$$S_i{\rm CH}_0(X)={\rm Im}\,(z:{\rm CH}_i(X)\rightarrow {\rm CH}_0(X)),$$
for any  adequate combination $z=\sum_j n_j z_j\in {\rm CH}^i(X)$ of
classes of subvarieties $Z_j\subset S_iX$ of codimension $i$ in $X$, such that the  class
$[z]$ is a nonzero polynomial in $c$ and an ample divisor class
$l\in {\rm NS}(X)$.
\end{theo}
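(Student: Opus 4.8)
The plan is to prove the two inclusions separately. The inclusion $\mathrm{Im}\,(z_*:\mathrm{CH}_i(X)\to\mathrm{CH}_0(X))\subset S_i\mathrm{CH}_0(X)$ is the easy one: by definition $z=\sum_j n_j z_j$ is supported on the subvarieties $Z_j\subset S_iX$, so any $0$-cycle of the form $z_*(\gamma)$ for $\gamma\in\mathrm{CH}_i(X)$ is a combination of points lying in $\bigcup_j Z_j\subset S_iX$, and hence lies in $S_i\mathrm{CH}_0(X)$ by Definition \ref{defifilt}. For this to be a cycle-class statement I would realize $z_*$ via intersection with the $Z_j$'s and use that $\mathrm{CH}_0(X)$ is generated by points of $X$, so $z_*(\gamma)$ is represented by $0$-cycles supported on the $Z_j$; each such point, being in $S_iX$, contributes a generator of $S_i\mathrm{CH}_0(X)$.

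The substantive direction is $S_i\mathrm{CH}_0(X)\subset\mathrm{Im}\,(z_*)$. By Definition \ref{defifilt}, $S_i\mathrm{CH}_0(X)$ is spanned by classes of points $x\in S_iX$, so it suffices to show that any such $[x]$ lies in $\mathrm{Im}\,(z_*)$. Fix $x\in S_iX$; then there is a constant cycle subvariety $K_x\subset O_x$ of dimension $i$, all of whose points are rationally equivalent to $x$ in $X$. The idea is to move $[x]$ into a cycle supported on one of the $Z_j$. Since $[z]$ is a nonzero polynomial in $c$ and an ample class $l$, part (iv) of Theorem \ref{theoisoclass} guarantees (after normalizing) that $[z]=\lambda_0 l^i+\cdots$ with $\lambda_0\neq 0$; in particular $[z]\cdot l^{2n-2i}$ is a nonzero multiple of $q(l)^{n-i}$ by (\ref{eqqc}), so $[z]$ is not only nonzero but has nonzero "leading term." I would use this to arrange that the intersection $z\cdot (K_x)$, or rather an appropriate correspondence computation, relates $[x]$ to $z_*(\text{something})$. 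Concretely: the cycle $z\cap K_x$ (defined via a moving-lemma representative of $z$ meeting $K_x$ properly, using that $\dim K_x=i$ and $\mathrm{codim}\,z=i$) is a $0$-cycle of degree $\deg([z]\cdot[K_x])$ supported on points of $\bigcup_j Z_j$; each of these points is rationally equivalent in $X$ to a point of $K_x$, hence to $x$, so $z\cap K_x\equiv_{\mathrm{rat}} (\deg([z]\cdot[K_x]))\,[x]$. If that degree is nonzero, we conclude $[x]\in\mathrm{Im}\,(z_*)$ after dividing (we are working with $\mathbb{Q}$-coefficients).

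Thus the main obstacle — and the step where Conjecture \ref{conjgen} really enters — is twofold. First, one must ensure $\deg([z]\cdot[K_x])\neq 0$; here I would use that $[K_x]$ is an effective isotropic $i$-cycle class (Corollary \ref{coroiso}), hence by the expected duality between isotropic and coisotropic classes (the cohomological content behind Conjecture \ref{conjgen} and Theorem \ref{theoisoclass}(iv)) its pairing with the "maximally non-degenerate" coisotropic class $[z]$ is nonzero; more robustly, one reduces to the case $[K_x]$ proportional to $l^{n-i}$ (possible since on a very general $X$, or after the Verbitsky/Gorenstein argument of Section \ref{sec11}, isotropic classes of degree $2(n-i)$ that are polynomials in $c$ and $l$ have nonzero $l^{n-i}$-coefficient), and then $\deg(l^i\cdot l^{n-i}\cdot(\cdots))\neq 0$ by (\ref{eqq}). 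Second, one must handle the possibility that $z\cap K_x$, though supported on $S_iX$, does not obviously land in $\mathrm{Im}\,(z_*)$ unless one knows the $Z_j$ meet $K_x$ in the expected dimension; this is where one invokes that, by Conjecture \ref{conjgen} applied in every relevant deformation and Theorem \ref{propcoiso}, the codimension-$i$ subvarieties in $S_iX$ with class $[z]$ can be chosen to move enough (their coisotropic fibrations sweep out $X$) so that a general translate meets $K_x$ properly. Assembling this — nonzero intersection number via the Fujiki/Verbitsky computation of Section \ref{sec11}, and the constant-cycle property of $K_x$ to collapse $z\cap K_x$ to a multiple of $[x]$ — gives the reverse inclusion and completes the proof.
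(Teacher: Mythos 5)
Your overall skeleton matches the paper's: the easy inclusion by support, and for the hard one, intersecting $z$ with an $i$-dimensional constant cycle subvariety $W_x\ni x$ and using the constant cycle property to identify $z\cdot W_x$ with $\deg(z\cdot W_x)\,[x]$. But the step on which everything hinges --- showing $\deg([z]\cdot[W_x])\neq 0$ --- is not actually proved in your proposal. The ``expected duality between isotropic and coisotropic classes'' is not a theorem you can cite, and your fallback reduction (``reduce to the case $[K_x]$ proportional to $l^{n-i}$'') is unjustified: the class of a constant cycle subvariety is a Hodge class of degree $2(2n-i)$ that has no reason to lie in the subalgebra of $H^*(X)$ generated by $H^2(X)$, so neither the Gorenstein/Verbitsky structure of that subalgebra nor the Fujiki relations (\ref{eqq}), (\ref{eqqc}) apply to it, and the degree bookkeeping in your sketch is also off. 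The paper's argument at this point is different and is the real content of the proof: since $W_x$ is a constant cycle subvariety it is isotropic (Corollary \ref{coroiso}), so $[\sigma]$ restricts to zero on a desingularization $\widetilde{W}_x$; because $h^{2,0}(X)=1$ the Hodge structure on $H^2(X,\mathbb{Q})_{tr}$ is simple, hence the whole restriction map $H^2(X,\mathbb{Q})_{tr}\to H^2(\widetilde{W}_x,\mathbb{Q})$ vanishes, and therefore $c_{tr}$ restricts to zero on $\widetilde{W}_x$. Combining this with Theorem \ref{theoisoclass}(iv), which gives $[z]=\lambda_0 l^i+c_{tr}\cdot(\cdots)$ with $\lambda_0\neq0$, one gets $\deg(z\cdot W_x)=\lambda_0\deg(l^i\cdot W_x)\neq0$ by ampleness of $l$. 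Without some argument of this kind your claim that the pairing is nonzero is an assertion, not a proof.

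A secondary, less serious point: your worry about moving the $Z_j$ so that they meet $K_x$ properly, and your appeal to Conjecture \ref{conjgen} ``in every relevant deformation'' to achieve this, is unnecessary and not how the conjecture is used. Since $X$ is smooth, the intersection product $z\cdot [W_x]$ is a well-defined element of ${\rm CH}_0(X)$ lying in ${\rm Im}\,(z_*)$ by the very definition of the map $z:{\rm CH}_i(X)\to{\rm CH}_0(X)$, and by refined intersection theory it is represented by a $0$-cycle supported on $W_x$; no moving lemma or general-position argument is needed. Conjecture \ref{conjgen} enters only once, at the outset, to guarantee (together with Theorem \ref{theoisoclass}(iii)--(iv)) that a cycle $z$ supported on $S_iX$ with class a nonzero polynomial in $c$ and $l$ exists at all.
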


\begin{proof} Note first of all that such a $z$ exists if Conjecture \ref{conjgen} holds, since
Theorem \ref{theoisoclass}  shows the existence of a nonzero
coisotropic class which is a polynomial in $c$ and any given ample
class $l$.

 Next, for any such $z$, the inclusion ${\rm Im}\,(z:{\rm CH}_i(X)\rightarrow {\rm CH}_0(X))\subset
 S_i{\rm CH}_0(X)$ is obvious, since ${\rm Supp}\,z\subset S_iX$ and by definition
 $S_i{\rm CH}_0(X)$ is generated by the classes of the points in $S_iX$.

Let us prove the inclusion $\subset$. Let $x\in S_iX$. By assumption, there exists
 an $i$-dimensional subvariety
 $W_x\subset X$ all of whose points are rationally equivalent to $x$ in $X$.
We claim that ${\rm deg}(W_x\cdot z)\not=0$. Assuming the claim,
 $W_x\cdot z\in {\rm CH}_0(X)$ is a $0$-cycle of degree different from $0$
 supported on $W_x$, hence a nonzero multiple of $x\in {\rm CH}_0(X)$.
 This gives us the  inclusion
 $$S_i{\rm CH}_0(X)\subset {\rm Im}\,(z:{\rm CH}_i(X)\rightarrow {\rm CH}_0(X))$$
 since by definition $S_i{\rm CH}_0(X)$ is generated by the classes of such points $x$.

 Let us prove the claim. We use the fact (see Corollary \ref{coroiso}) that a constant cycle subvariety
 $W$
 is isotropic, that is $\sigma_{|W_{reg}}=0$ as a form.
 Equivalently, the pull-back of $\sigma$ to a desingularization
 $\widetilde{W}$ of $W$ has a vanishing class in
 $H^2(\widetilde{W},\mathbb{C})$. The Hodge structure on $H^2(X,\mathbb{Q})_{tr}$ being simple
 because $h^{2,0}(X)=1$, it follows that the restriction map
 $H^2(X,\mathbb{Q})_{tr}\rightarrow H^2(\widetilde{W},\mathbb{Q})$ vanishes identically.
 This implies that the class $c_{tr}$, which by construction
 belongs to $S^2H^2(X,\mathbb{Q})_{tr}\subset H^4(X,\mathbb{Q})$, vanishes in
$ H^4(\widetilde{W},\mathbb{Q})$. As we know by Theorem
\ref{theoisoclass}, (iv) that $[z]=\lambda l^i+c_{tr}l^{i-2}+...$,
with $\lambda\not=0$, we get that ${\rm deg}(W\cdot z)=\lambda{\rm
deg}(W\cdot l^i)\not=0$.

\end{proof}

This result suggests that Beauville's conjectural splitting of the
Bloch-Beilinson filtration could be obtained by considering the
action of the classes in ${\rm CH}(X)$ of codimension $i$
subvarieties of $X$ contained in $S_iX$. More precisely, we  would
like to impose the following rule: Let $C_{2n-i}(X)\subset {\rm CH}^i(X)$
be the $\mathbb{Q}$-vector space generated by codimension $i$
components of $S_iX$.

\vspace{0.5cm}

{\it (*) The subspace $C_{2n-i}(X)\subset {\rm CH}^i(X)$ is
contained in the $0$-th piece ${\rm CH}^i(X)_0$ of the Beauville
conjectural splitting.}

\vspace{0.5cm}

 For this to be compatible with multiplicativity (and the axiom that
 $F^1_{BB}{\rm CH}={\rm CH}_{hom}$), one needs to prove the following concrete
conjecture:
\begin{conj}\label{conjclasscoiso} Let $X$ be hyper-K\"ahler of dimension $2n$.
Then the cycle class map is injective on the subalgebra of ${\rm
CH}^*(X)$ generated by $\oplus_iC_{2n-i}(X)$.
\end{conj}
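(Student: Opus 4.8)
The plan is to split the statement into a ``cohomological'' part — identifying the image ring $\overline R^*:={\rm cl}(R^*)\subset H^{2*}(X,\mathbb Q)$, where ${\rm cl}$ is the cycle class map and $R^*\subset {\rm CH}^*(X)$ is the subalgebra generated by $\bigoplus_iC_{2n-i}(X)$ — and a ``lifting'' part, proving that the relations which hold in $\overline R^*$ already hold among the corresponding generators in $R^*$; the latter then forces ${\rm cl}\colon R^*\to\overline R^*$ to be an isomorphism. The crucial relations are the Verbitsky-type vanishings $x^{n+1}=0$ for $q(x)=0$ among divisor and $c$-classes and, above all, the single top-degree relation $\dim_{\mathbb Q}R^{2n}\leq 1$, i.e. the proportionality in ${\rm CH}_0(X)$ of all products of codimension-$i$ components of $S_iX$ of total codimension $2n$; this last point is where I expect the real difficulty to lie.

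For the cohomological part: assuming Conjecture~\ref{conjgen} at the level $i=1$ — every divisor class is coisotropic by Lemma~\ref{leisoclass} — one has $C_{2n-1}(X)={\rm NS}(X)_{\mathbb Q}$, so $R^*$ contains an ample class $l$, whence $l^{2n}\in R^{2n}$ has nonzero degree and ${\rm cl}\colon R^{2n}\to H^{4n}(X,\mathbb Q)=\mathbb Q$ is at least surjective. Now $\overline R^*$ is contained in the subalgebra of $H^{2*}(X,\mathbb Q)$ generated by $H^2(X,\mathbb Q)$, which is Gorenstein with socle $H^{4n}(X,\mathbb Q)$ by Verbitsky's theorem \cite{verb}; and, by Theorem~\ref{theoisoclass} together with the representation theory of the orthogonal group used in its proof (and in that of Lemma~\ref{propavion}), in the best-understood case $\rho(X)=1$ — where all coisotropic Hodge classes are polynomials in $c$ and $l$ — $\overline R^*$ is exactly the Gorenstein subalgebra of $H^{2*}(X,\mathbb Q)$ generated by $l$ and $c_{tr}$. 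The formal payoff is the standard descending induction on degree: if ${\rm cl}$ is injective on $R^d$ for $d>k$ and $\alpha\in R^k$ is homologically trivial, then for every $\beta\in R^{2n-k}$ the product $\alpha\cdot\beta\in R^{2n}$ is homologically trivial, hence zero once top-degree injectivity is known, so $\alpha$ annihilates $R^{2n-k}$; Poincar\'e duality inside the Gorenstein ring $\overline R^*$ then yields $\alpha=0$, provided $\dim_{\mathbb Q}R^k=\dim_{\mathbb Q}\overline R^k$ — which is precisely the content of the lifting of relations.

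The heart of the matter is thus the top-degree statement. A generator of $R^{2n}$ is a product $z=Z_1\cdot\ldots\cdot Z_m$ with $Z_j\subset S_{i_j}X$ a codimension-$i_j$ component and $\sum_j i_j=2n$. By Theorem~\ref{propcoiso} each $Z_j$ is algebraically coisotropic, its coisotropic fibration $Z_j\dashrightarrow B_j$ having $i_j$-dimensional constant-cycle fibres; running the countability-of-Hilbert-schemes construction from the proof of that theorem, together with Mumford's identity $f^*\sigma=p^*\alpha^*\sigma$ of~(\ref{equaform}) and Corollary~\ref{coroiso}, I would move the cycle $z$ along these families of constant-cycle subvarieties so as to represent it on a single maximal — hence Lagrangian, by Corollary~\ref{coroiso} — constant-cycle subvariety of $X$; in other words, $z\in S_n{\rm CH}_0(X)$. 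It then suffices to know that $S_n{\rm CH}_0(X)$ is one-dimensional, equivalently that ${\rm CH}_0$ of a constant-cycle Lagrangian subvariety of $X$ is $\mathbb Q$ — which is exactly the $i=n$ instance of Conjecture~\ref{conjfiltopp} together with the Bloch-Beilinson axioms — to conclude $\dim_{\mathbb Q}R^{2n}\leq 1$, so that ${\rm cl}\colon R^{2n}\to H^{4n}(X,\mathbb Q)$ is injective.

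Unconditionally, this last point and the size control needed in the induction are available in the three families studied in Section~\ref{sec4}: for $X={\rm Hilb}^n(K3)$ the group ${\rm CH}_0(X)$ is well understood through Theorem~\ref{theofiltcomp} and Proposition~\ref{prodecompchow}, so both the top-degree proportionality and the Verbitsky-type relations can be verified directly; the Fano variety of lines of a cubic fourfold and the Lehn-Lehn-Sorger-van Straten $8$-folds are then reduced to the Hilbert-scheme case through the geometric parametrizations of Section~\ref{sec4}. The main obstacle for a general hyper-K\"ahler variety is precisely the step ``$z\in S_n{\rm CH}_0(X)$ and $S_n{\rm CH}_0(X)=\mathbb Q$'': there is at present no unconditional control of ${\rm CH}_0$ of an arbitrary algebraically coisotropic subvariety, so outside the known cases this argument only reduces Conjecture~\ref{conjclasscoiso} to Conjectures~\ref{conjmain} and~\ref{conjfiltopp} — making it unconditional would essentially amount to proving Beauville's splitting conjecture for ${\rm CH}_0(X)$.
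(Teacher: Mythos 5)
You should note first that the statement you were asked to prove is Conjecture \ref{conjclasscoiso}: the paper does not prove it. It is stated as an open problem, supported only by partial evidence — the conditional Theorem \ref{theocondcond}, Lemma \ref{theofin}, the reduction for ${\rm Hilb}^n(K3)$ to self-products and symmetric products of the $K3$ surface via the de Cataldo--Migliorini decomposition and Vial's multiplicativity, and Proposition \ref{propconjvar}, which settles only the special case of Conjecture \ref{conjvar} for the Fano variety of lines of a cubic fourfold. Your proposal, by your own admission in the last paragraph, is likewise not a proof: it is a reduction conditional on Conjectures \ref{conjgen}, \ref{conjmain} and \ref{conjfiltopp}/\ref{conjSiop}, so it cannot be accepted as establishing the statement.

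Even as a conditional reduction there are two genuine gaps. First, the ``heart of the matter'' step is unjustified: for a monomial $z=Z_1\cdot\ldots\cdot Z_m$ with $Z_j\subset S_{i_j}X$ of codimension $i_j$ and $\sum_j i_j=2n$, support considerations only give $z\in S_{\max_j i_j}{\rm CH}_0(X)$ — for instance a product of $2n$ divisors contained in $S_1X$ a priori lies only in $S_1{\rm CH}_0(X)$, not in $S_n{\rm CH}_0(X)$. Your plan to ``move the cycle along the constant-cycle fibrations'' of Theorem \ref{propcoiso} so as to concentrate it on a Lagrangian constant-cycle subvariety is exactly a multiplicativity statement for the filtration $S_\cdot$ (roughly $S_i\cdot S_j\subset S_{i+j}$ on the level of intersections of the coisotropic components) which is nowhere available in the paper and is of the same order of difficulty as the conjecture itself; the paper's own Theorem \ref{theocondcond} and Lemma \ref{theofin} only treat products of a single codimension-$i$ class with an $i$-dimensional constant-cycle class, where the special shape $[z]=\lambda_0 l^i+\lambda_1 l^{i-2}c_{tr}+\ldots$, $\lambda_0\neq 0$, from Theorem \ref{theoisoclass}(iv) is used, and even that requires Conjecture \ref{conjgen}. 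Second, your descending induction silently assumes the equality $\dim_{\mathbb Q}R^k=\dim_{\mathbb Q}\overline R^k$ in every intermediate degree (your ``lifting of relations''), i.e.\ that every relation among the cohomology classes of the generators already holds in ${\rm CH}^*(X)$; but that is precisely the content of the conjecture, so the argument is circular in the middle degrees — injectivity of the cycle class map in top degree plus Gorenstein duality of the subalgebra of $H^*(X,\mathbb Q)$ generated by $H^2$ does not by itself propagate injectivity downward without this dimension control. (Also, your identification of $\overline R^*$ with the subalgebra generated by $l$ and $c_{tr}$ uses $\rho(X)=1$ and Conjecture \ref{conjgen}; in general the classes of the codimension-$i$ components of $S_iX$ need not be polynomials in divisor classes and $c$.) The honest conclusion of your argument — that the statement would follow from the other conjectures of the paper in the known families of examples — is reasonable as a remark, but it does not prove Conjecture \ref{conjclasscoiso}, and indeed nothing in the paper does.
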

Restricting to the $\mathbb{Q}$ vector subspace  $Z_{2n-i}(X)$  of
${\rm CH}^i(X)$ generated by codimension $i$ subvarieties contained
in $S_iX$ for any given $i$, we have to prove in particular:
\begin{conj}\label{conjvar} For any $i$ such that
$0\leq i\leq n$, the cycle class map is injective on the
$\mathbb{Q}$-vector subspace $C_{2n-i}(X)$ of ${\rm CH}^i(X)$.

In particular, for $i=n$, $2n={\rm dim}\,X$, the cycle class map is
injective on the subspace $C_n(X)$ generated by classes in ${\rm
CH}(X)$ of constant cycle Lagrangian subvarieties of $X$.
\end{conj}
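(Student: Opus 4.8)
The plan is to treat Conjecture \ref{conjvar} as a concrete instance of the (currently unavailable) Bloch--Beilinson package and to establish it family by family; the argument has a cohomological part, essentially already in the paper, and a Chow-theoretic rigidity part, which is the real content. For the cohomological part, note that by Theorem \ref{propcoiso}(i) every generator $[Z_j]$ of $C_{2n-i}(X)$ is the class of a coisotropic subvariety, hence by Lemma \ref{leisoclass} it is a coisotropic class, i.e. it is killed by $\cup[\sigma]^{n-i+1}$. Taking $X$ very general with $\rho(X)=1$ (the case I would start with), and where ${\rm Hdg}^{2i}(X,\mathbb{Q})$ is generated by an ample class $l$ and the class $c$ --- as for deformations of ${\rm Hilb}^n(K3)$, where $c$ is algebraic by \cite{mark} --- Theorem \ref{theoisoclass}(iv) identifies the space of such coisotropic classes with a single line, spanned by a class $z_0=\lambda_0 l^i+\lambda_1 l^{i-2}c_{tr}+\dots$ with $\lambda_0\neq 0$. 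Thus the image of $C_{2n-i}(X)$ under the cycle class map is at most one-dimensional and is nonzero on $z_0$, and the desired injectivity is reduced to proving $\dim C_{2n-i}(X)\le 1$.

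For the rigidity part, I would prove that any two codimension $i$ constant cycle subvarieties $Z,Z'\subset S_iX$ have proportional classes in ${\rm CH}^i(X)$. For $i=n$ and $X=S^{[n]}$ this should come out of Theorem \ref{theofiltcomp}: there $S_n{\rm CH}_0(X)=N_n{\rm CH}_0(X)={\rm Im}\bigl((no_S)_*\colon{\rm CH}_0(S^{(0)})\to{\rm CH}_0(S^{(n)})\bigr)$ is one-dimensional, so every constant cycle Lagrangian of $S^{[n]}$ carries the same $0$-cycle $no_S$; feeding this into the Beauville-type decomposition of ${\rm CH}^n(S^{[n]})$ supplied by \cite{voisinlaz}, \cite{shenvial} and Proposition \ref{prodecompchow}, one expects $[Z]$ to lie in the degree $0$ piece ${\rm CH}^n(S^{[n]})_0$ and hence to be determined by its cohomology class, whence proportionality. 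For general $i$ I would reduce to $i=n$ by restricting a homologically trivial $z=\sum_j n_j[Z_j]$ to a general fibre of the coisotropic fibration $Z\dashrightarrow B$ of Theorem \ref{propcoiso}(ii): such a fibre is an $i$-dimensional constant cycle subvariety, so $c_{tr}$ restricts to $0$ on any desingularization of it --- the transcendental part $H^2(X,\mathbb{Q})_{tr}$ restricts to $0$, exactly as in the proof of Theorem \ref{theocondcond} --- and one detects the remaining $l^{i-2k}c_{tr}^{\,k}$-components of $[z]$ by intersecting with powers of $l$ as in Theorem \ref{theocondcond}, proceeding by induction on $i$.

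When $\rho(X)>1$, Theorem \ref{theoisoclass}(iii) shows the target of the first step grows (dimension $\ge i+1$ when $\rho=2$), so one must detect each monomial $l^a e^b c_{tr}^{\,k}$ separately, again using the vanishing of $c_{tr}$ on constant cycle fibres together with the Fujiki relations of \cite{fujiki} in formula (\ref{eqfujita}); and for the Fano variety of lines of a cubic fourfold and the eightfolds of \cite{LLS} one would run the same two steps using, in place of \cite{voisinlaz}, the explicit parametrizations developed in Section \ref{sec4}, reducing the eightfold case to the variety of lines. The main obstacle is exactly the rigidity input of the second step: upgrading the cohomological rigidity, which Theorem \ref{theoisoclass} already delivers, to rigidity in ${\rm CH}^i(X)$ amounts to the assertion that a homologically trivial combination of classes of constant cycle subvarieties is rationally trivial --- an instance of Bloch--Beilinson with no known proof in general, which is why the statement is only a conjecture and is here substantiated only in examples.
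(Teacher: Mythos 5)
The statement you are trying to prove is stated in the paper as a conjecture (Conjecture \ref{conjvar}); the paper contains no proof of it, only evidence, and your proposal does not close that gap either --- as you yourself acknowledge in your last sentence. The acknowledgement is the crux: the ``rigidity part'' you isolate, namely that a homologically trivial combination of classes of codimension~$i$ subvarieties contained in $S_iX$ (resp.\ of Lagrangian constant cycle subvarieties) vanishes in ${\rm CH}^i(X)$, is not an auxiliary input to be supplied later; it \emph{is} Conjecture \ref{conjvar}. So the argument is circular, and what remains is a strategy sketch rather than a proof. Moreover, even the cohomological reduction in your first step is shakier than you suggest: Theorem \ref{theoisoclass}(iv) only controls coisotropic classes that are polynomials in $l$ and $c_{tr}$, and for that to bound the image of $C_{2n-i}(X)$ under the cycle class map by a line you need ${\rm Hdg}^{2i}(X,\mathbb{Q})$ to be generated by $l$ and $c$, which fails already in the motivating example: for $X=S^{[n]}$ with $n\geq 3$ the degree~$4$ Hodge classes include classes supported on the exceptional divisor and the classes of the $E_\mu$'s (see the discussion preceding Proposition \ref{propcoisigen}), so the image of $C_{2n-2}(X)$ in cohomology is not one-dimensional and injectivity is not equivalent to $\dim C_{2n-i}(X)\leq 1$. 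Likewise, in your $i=n$, $X=S^{[n]}$ discussion, the step ``$[Z]$ lies in ${\rm CH}^n(S^{[n]})_0$, hence is determined by its cohomology class'' is exactly the unproved content (the paper only reduces Conjectures \ref{conjclasscoiso} and \ref{conjpresquempty} for $S^{[n]}$ to open statements about self-products of $K3$ surfaces, and verifies membership in ${\rm CH}^i(S^{[n]})_0$ only for the special subvarieties $s^{-1}(S^{(\mu)})$).

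Where the paper does prove something unconditional --- Proposition \ref{propconjvar}, the case $i=n$ for the Fano variety of lines $F$ of a cubic fourfold --- it uses a mechanism absent from your sketch: by Shen--Vial, ${\rm CH}^2(F)_{hom}$ is recovered from $0$-cycles via the incidence correspondence through the identity $I_*(g^2\sigma)=-6\sigma$, and for $\sigma$ a homologically trivial combination of constant cycle surfaces one has $g^2\sigma\in S_2{\rm CH}_0(F)\cap{\rm CH}_0(F)_{hom}=0$ because $F$ satisfies Conjecture \ref{conjSiop} (Proposition \ref{procascubic}); hence $\sigma=0$. Note the logical direction: rather than placing $[\sigma]$ in the $0$-th piece of a splitting and invoking injectivity there, one uses a concrete correspondence identity to reduce codimension-two rigidity to the already established statement about $0$-cycles. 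If you want to substantiate the conjecture in examples, that is the route to follow (and it is also the natural template for attacking the LLSS eightfolds via Proposition \ref{proZ}); as a general proof, your proposal cannot succeed without an input of Bloch--Beilinson type that is currently unavailable.
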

An evidence for this conjecture is provided by Proposition
\ref{propconjvar}, which establishes it for the Fano variety of lines of a cubic fourfold.

A last conjecture suggested by the results in Section \ref{sec1} concerns
the possibility of constructing the conjectural Beauville decomposition from
the filtration $S_\cdot$ studied in the previous section, at least on some part of
${\rm CH}(X)$. Here we assume of course the existence of the Bloch-Beilinson filtration.
First of all,  let us consider the case of ${\rm CH}_0$.

\begin{conj} \label{conjSiop}Let $X$ be hyper-K\"ahler of dimension $2n$.
For any integer $i$ such that
$0\leq i\leq n$, the filtration $S_\cdot$ is opposite to the filtration
$F'_{BB}$ in the sense that
$$S_ i{\rm CH}_0(X)\cong {\rm CH}_0(X)/{F'}_{BB}^{n-i+1}{\rm CH}_0(X)=0.$$
\end{conj}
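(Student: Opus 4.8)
The assertion is conjectural for a general hyper-K\"ahler $X$, but it holds unconditionally for $X=S^{[n]}$ with $S$ a projective $K3$ surface, and my plan is to reproduce that case. There one combines Theorem~\ref{theofiltcomp}, which identifies $S_\cdot$ with the filtration $N_\cdot$, with Proposition~\ref{prodecompchow}(ii), which exhibits $N_\cdot$ and $F'_{BB}$ as opposite filtrations by means of the explicit projectors $P_i$. For general $X$ one is thus reduced to proving that the natural map
$$\rho_i\colon S_i{\rm CH}_0(X)\longrightarrow {\rm CH}_0(X)/{F'}_{BB}^{n-i+1}{\rm CH}_0(X)$$
is both surjective and injective; the extreme value $i=0$ is harmless, since ${F'}_{BB}^{n+1}{\rm CH}_0(X)=0$ and $S_0X=X$, so I would assume $1\le i\le n$.

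For surjectivity of $\rho_i$ the plan is to invoke Conjecture~\ref{conjmain} together with the Bloch--Beilinson axioms: this is essentially the content of Lemma~\ref{lesurjtoopp}. Concretely, Conjecture~\ref{conjmain} furnishes a codimension~$i$ component $Z\subset S_iX$, Theorem~\ref{propcoiso} makes (the smooth locus of) $Z$ algebraically coisotropic with $i$-dimensional constant cycle fibers, and one may moreover arrange, using Conjecture~\ref{conjgen} and Theorem~\ref{theoisoclass}(iv), that an adequate $\mathbb{Q}$-combination of such components has class $\mu l^i+c_{tr}l^{i-2}+\cdots$ with $\mu\neq0$ for an ample $l$. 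The self-correspondence of $X$ obtained from this cycle acts on ${\rm CH}_0(X)$ through $S_i{\rm CH}_0(X)$ because its support lies in $S_iX$, and is cohomologically non-degenerate; applying the refined axioms \eqref{eqGammaaction}--\eqref{eqGammaaction2} and multiplicativity to its difference with a nonzero multiple of the diagonal then shows that $S_i{\rm CH}_0(X)$ surjects onto ${\rm CH}_0(X)/{F'}_{BB}^{n-i+1}{\rm CH}_0(X)$.

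The injectivity of $\rho_i$, i.e. $S_i{\rm CH}_0(X)\cap {F'}_{BB}^{n-i+1}{\rm CH}_0(X)=0$, is the main obstacle, and I expect it to be the crux of the conjecture. Concretely one must show that a point $x\in S_iX$ whose class lies in ${F'}_{BB}^{n-i+1}{\rm CH}_0(X)$ satisfies $[x]=0$ in ${\rm CH}_0(X)$ --- equivalently, that the mere condition $\dim O_x\ge i$ already pins the Bloch--Beilinson level of $[x]$ at $\le 2(n-i)$. For $X=S^{[n]}$ this is exactly what the precise geometric input of \cite{voisinlaz} (Theorem~2.1 and Variant~2.4) supplies: it describes the rational equivalence orbit of a $0$-cycle on a $K3$ surface by means of constant cycle curves, and thereby forces such an $x$ into the image of $(io_S)_*$, hence into $N_i$ (this is precisely how Theorem~\ref{theofiltcomp} is proved). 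For a general hyper-K\"ahler $X$ no comparable description of the orbits of $0$-cycles under rational equivalence is known; producing one --- a statement controlling the Bloch--Beilinson level by the orbit dimension --- is where the real difficulty lies, and it is tightly bound up with Conjecture~\ref{conjgen} on coisotropic classes and, ultimately, with the very existence of the Beauville splitting.

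Finally, once $\rho_i$ is known to be an isomorphism for every $i$, the direct sum decomposition ${\rm CH}_0(X)=\bigoplus_j {\rm CH}_0(X)_{2j}$ with ${\rm CH}_0(X)_{2j}:=S_{n-j}{\rm CH}_0(X)\cap F^{2j}_{BB}{\rm CH}_0(X)$, and the existence of projectors cutting it out that act as $\mathrm{Id}$ on $H^{2j,0}(X)$ and as $0$ on $H^{k,0}(X)$ for $k\neq 2j$, follow by a purely formal argument --- exactly as parts (ii) and (iii) of Proposition~\ref{prodecompchow} are deduced from part (i).
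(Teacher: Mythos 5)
Your treatment matches the paper's own: the statement is left as a conjecture there as well, supported by exactly the evidence you give --- the case $X=S^{[n]}$ via Theorem \ref{theofiltcomp} combined with Proposition \ref{prodecompchow}, surjectivity of the comparison map in general via Lemma \ref{lesurjtoopp} under Conjecture \ref{conjmain} and the Bloch--Beilinson axioms, and the injectivity $S_i{\rm CH}_0(X)\cap {F'}_{BB}^{n-i+1}{\rm CH}_0(X)=0$ correctly identified as the open crux. The paper additionally records the Fano variety of lines of a cubic fourfold (Proposition \ref{procascubic}) and the conditional case $i=n$ (Lemma \ref{lechapa}) as further evidence, but your account of what is actually proved and what remains open is accurate.
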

The main evidences for this conjecture are the cases of $S^{[n]}$,
where $S$ is a $K3$ surface (see Sections \ref{sec1} and \ref{sec4})
and of the Fano variety of lines of a cubic fourfold (see
Proposition \ref{procascubic}), for which we already have candidates for the Bloch-Beilinson
filtrations.
\begin{rema}\label{rema1jan}{\rm In the case
$i=n$, Conjecture \ref{conjSiop} takes a more concrete form which does not assume  the existence of the Bloch-Beilinson filtration. Indeed,
we then have ${F'}_{BB}^{n-i+1}{\rm CH}_0(X)=F^2_{BB}{\rm
CH}_0(X)={\rm CH}_0(X)_{hom}$ and we are considering $0$-cycles
supported on constant cycles Lagrangian subvarieties. The conjecture
is that they are rationally equivalent to $0$ if and only if they
are of degree $0$.}
\end{rema}
The following observation illustrates the importance of Conjecture \ref{conjmain}
for our constructions:
 \begin{lemm}\label{lesurjtoopp}
 Assuming Conjecture \ref{conjmain},
the map
\begin{eqnarray}\label{flechenat}
S_ i{\rm CH}_0(X)\rightarrow {\rm CH}_0(X)/ {F'}_{BB}^{n-i+1}{\rm CH}_0(X)
\end{eqnarray}
is surjective.
\end{lemm}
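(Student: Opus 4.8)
The plan is to exploit the splitting of the Bloch--Beilinson filtration (or rather the expected decomposition) and reduce the surjectivity to a statement about the graded piece governed by the $(2(n-i),0)$-forms, which in turn is controlled by a suitable correspondence. First I would recall that, by the axioms of the Bloch--Beilinson filtration, there is a projector $\Gamma$ acting on ${\rm CH}_0(X)$ such that $\Gamma_*$ acts as the identity on $H^{2(n-i),0}(X)$ (the bottom-degree relevant form group at the level $F'^{\,n-i}_{BB}$) and annihilates all other $H^{j,0}(X)$; more precisely, after passing to a finite cover of a spread, one has a correspondence whose action realizes the quotient ${\rm CH}_0(X)/F'^{\,n-i+1}_{BB}{\rm CH}_0(X)$ as a summand of ${\rm CH}_0(X)$. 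Thus a class $\alpha\in {\rm CH}_0(X)/F'^{\,n-i+1}_{BB}{\rm CH}_0(X)$ lifts to a class $\tilde\alpha\in {\rm CH}_0(X)$ on which the filtration ``truncates'' after step $n-i$.

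The key geometric input is Conjecture \ref{conjmain}: the set $S_iX$ has dimension $2n-i$, hence contains a subvariety $Z$ of codimension $i$. By Theorem \ref{propcoiso}(i)--(ii), $Z$ is algebraically coisotropic and its coisotropic fibration $Z\dashrightarrow B$ has general fibers which are $i$-dimensional constant cycle subvarieties of $X$. The natural strategy is then: given $\tilde\alpha$ as above, move it (using a moving lemma, and using that ${\rm CH}_0$ of a variety with no $(j,0)$-forms for $j$ in the relevant range is controlled by low-dimensional cycles) to a $0$-cycle supported on the union of $i$-dimensional fibers of $Z\dashrightarrow B$; since each such fiber is a constant cycle subvariety, any $0$-cycle supported on it is rationally equivalent (up to degree) to a single point of $S_iX$, and therefore its class lies in $S_i{\rm CH}_0(X)$. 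Concretely one pulls $\tilde\alpha$ back along a generically finite cover $Z'\to Z$, uses that the fibers of $Z'\to B$ have all their points rationally equivalent in $X$, and applies Mumford's theorem (Lemma \ref{propmum}) to see that nothing is lost modulo $F'^{\,n-i+1}_{BB}$, so that the image in the quotient is unchanged while the representative now visibly lies in $S_i{\rm CH}_0(X)$.

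\textbf{The main obstacle.} The delicate step is the ``moving'' step: one must show that every class in ${\rm CH}_0(X)/F'^{\,n-i+1}_{BB}{\rm CH}_0(X)$ has a representative supported on $S_iX$, i.e.\ on the codimension-$i$ locus provided by Conjecture \ref{conjmain}. The point is that the quotient ${\rm CH}_0(X)/F'^{\,n-i+1}_{BB}{\rm CH}_0(X)$ is governed by the Hodge structures $H^{0,0},H^{2,0},\ldots,H^{2(n-i),0}$, hence by cohomology of geometric coniveau $\geq n-i$ of $X$ after the complementary forms are killed; by the (generalized) Hodge conjecture--type reasoning sketched in Section \ref{secBB}, such cohomology is supported on a closed subvariety of dimension $\leq \dim X - (n-i) = n+i$, and one wants to match this with the constant-cycle fibration structure so that the cycle can be pushed into $S_iX$. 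I would handle this by using the correspondence $\Gamma$ together with the coisotropic fibration $\phi:Z\dashrightarrow B$: the composite correspondence $Z\xrightarrow{\phi} B$ and the inclusion $Z\hookrightarrow X$ produce a correspondence from $X$ to itself factoring through $B$ (of dimension $2n-2i$) which, acting on ${\rm CH}_0(X)$, has image in $S_i{\rm CH}_0(X)$ and acts as the identity on the relevant form groups modulo $F'^{\,n-i+1}_{BB}$; verifying this last identity on forms, using Lemma \ref{propmum} to compare $\phi^*\sigma_B$ with $\sigma_{|Z}$, is where the real work lies, and it is precisely the kind of statement that, combined with the axiom \ref{itemcorr}, forces surjectivity of \eqref{flechenat}.
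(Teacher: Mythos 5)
Your proposal stops exactly where the proof has to happen: you reduce surjectivity to the claim that a correspondence through $B$ (or a "moving" argument) acts as the identity, or at least nontrivially, on the form groups $H^{2j,0}(X)$, $j\le n-i$, modulo ${F'}_{BB}^{n-i+1}$, and you then say that verifying this ``is where the real work lies.'' That verification is the entire content of the lemma, so as written the argument has a gap at its decisive step. In addition, your opening move is not available: the Bloch--Beilinson axioms give a filtration, not a splitting, so there is no projector $\Gamma$ realizing ${\rm CH}_0(X)/{F'}_{BB}^{n-i+1}{\rm CH}_0(X)$ as a summand of ${\rm CH}_0(X)$ --- the existence of such a splitting is precisely Beauville's conjecture, which this circle of statements is meant to support, so it cannot be assumed. (Lifting a class from the quotient of course needs no projector, so this part can simply be excised, but it signals that the formalism is being used for more than it gives.)

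The paper's argument is much shorter and uses neither Theorem \ref{propcoiso}, nor the coisotropic fibration, nor Mumford's theorem. Conjecture \ref{conjmain} provides a codimension $i$ component $Z\subset S_iX$. Since $\sigma$ is everywhere nondegenerate, the restriction $\sigma_{\mid Z_{reg}}$ automatically has rank $\ge 2n-2i$ for \emph{any} codimension $i$ subvariety, so $\sigma^{n-i}$ does not vanish on a desingularization $\widetilde Z$; as $H^0(X,\Omega_X^l)$ vanishes for odd $l$ and is spanned by the corresponding power of $\sigma$ for even $l$, the pull-back map $H^0(X,\Omega_X^l)\rightarrow H^0(\widetilde Z,\Omega^l_{\widetilde Z})$ is injective for all $l\le 2(n-i)$. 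The general Bloch--Beilinson axioms then give surjectivity of ${\rm CH}_0(\widetilde Z)\rightarrow {\rm CH}_0(X)/{F'}_{BB}^{n-i+1}{\rm CH}_0(X)$, and since every point of $Z$ lies in $S_iX$, the image of ${\rm CH}_0(\widetilde Z)$ in ${\rm CH}_0(X)$ is contained in $S_i{\rm CH}_0(X)$, which concludes. This nondegeneracy observation is exactly what your ``moving step'' is trying to substitute for; even if you pursued the correspondence-through-$B$ route, you would still have to prove a nonvanishing of its action on $\sigma^{n-i}$, which comes down to the same computation, so the detour through the constant-cycle fibration buys nothing here.
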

\begin{proof}
First we claim that  for any component
$Z$ of codimension $i$ of $S_iX$, the pull-back map
$$H^0(X,\Omega_X^l)\rightarrow H^0(\widetilde{Z},\Omega_{\widetilde{Z}}^l)$$
is injective for $l\leq 2(n-i)$, where $\widetilde{Z}$ is a
desingularization of $Z$. Indeed, the space $H^0(X,\Omega_X^l)$ is
equal to $0$ for odd $l$ and is generated by the form $\sigma^{l'}$
for $l=2l'$. The form $\sigma$ being everywhere nondegenerate, the
rank of $\sigma_{\mid {Z}_{reg}}$ is at least $2n-2i$, which implies
that $\sigma^{n-i}$ does not vanish in
$H^0(\widetilde{Z},\Omega_{\widetilde{Z}}^{n-i})$.

By the general axioms concerning
the Bloch-Beilinson filtration, the claim above  guarantees  the surjectivity
of the map (\ref{flechenat}).
\end{proof}
Conjecture \ref{conjSiop} thus concerns the injectivity of this map. One case would be also  an easy consequence
of Conjecture \ref{conjgen}:
\begin{lemm}\label{lechapa} (i) (Charles-Pacienza \cite{chapa}) Conjecture \ref{conjSiop} holds for constant Lagrangian surfaces
in very general algebraic deformations of ${\rm Hilb}^2(S)$.

(ii) More generally,  Conjecture \ref{conjSiop} holds for $i=n$ if
$X$ contains a constant Lagrangian subvariety which is connected and
of class $\lambda_nl^n+\lambda_{n-2}c_{tr}l^{n-2}+\ldots$ for some
ample class $l\in {\rm NS}(X)$.
\end{lemm}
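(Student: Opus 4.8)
The plan is to deduce both parts from a single computation modelled on the proof of Theorem~\ref{theocondcond}, the new input being that a \emph{single} connected constant cycle Lagrangian subvariety $L$ forces the rational equivalence class of \emph{every} point lying on a constant cycle Lagrangian subvariety to be one fixed class. So let me first treat (ii). Let $o_L\in {\rm CH}_0(X)$ be the common rational equivalence class of the closed points of $L$ (well defined since $L$ is connected and constant cycle) and write $[L]=\lambda_nl^n+\lambda_{n-2}c_{tr}l^{n-2}+\ldots$ with $\lambda_n\neq0$ — automatic when $X$ is very general, since $L$ being Lagrangian is coisotropic of codimension $n$, so $[L]$ is a nonzero coisotropic class by Lemma~\ref{leisoclass} and Theorem~\ref{theoisoclass}(iv) applies. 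The key step is: if $W\subset X$ is \emph{any} irreducible $n$-dimensional constant cycle subvariety and $x\in X$ is rationally equivalent in $X$ to the points of $W$, then $x=o_L$ in ${\rm CH}_0(X)$.

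To prove the key step I would form the refined intersection class $[L]\cdot[W]$, which lives in ${\rm CH}_0(L\cap W)$ (the relevant index being $\dim L+\dim W-\dim X=0$), and denote by $\delta\in {\rm CH}_0(X)$ its image. First, $\deg\delta=\int_X[L]\cup[W]$; and since $W$, being a constant cycle subvariety, is isotropic by Corollary~\ref{coroiso}, the argument used in the proof of Theorem~\ref{theocondcond} — using that $H^2(X,\mathbb{Q})_{tr}$ is a simple Hodge structure because $h^{2,0}(X)=1$ — shows that the restriction map $H^2(X,\mathbb{Q})_{tr}\to H^2(\widetilde W,\mathbb{Q})$ vanishes on a desingularization $\widetilde W$ of $W$, so that $c_{tr}$, and hence every class $c_{tr}^jl^{n-2j}$ with $j\geq1$, restricts to $0$ on $\widetilde W$. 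Therefore $\deg\delta=\lambda_n\int_{\widetilde W}(l_{\mid\widetilde W})^n\neq0$, the integral being the (positive) $l$-degree of $W$; in particular $L\cap W\neq\emptyset$. On the other hand, pushing $[L]\cdot[W]$ forward to ${\rm CH}_0(X)$ through ${\rm CH}_0(L)$ and using that every closed point of $L$ has class $o_L$ gives $\delta=(\deg\delta)\,o_L$, while pushing it forward through ${\rm CH}_0(W)$ and using that every closed point of $W$ has class $x$ gives $\delta=(\deg\delta)\,[x]$. Since $\deg\delta\neq0$ and Chow groups are taken with $\mathbb{Q}$-coefficients, $x=o_L$.

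Granting the key step, (ii) follows at once: an arbitrary element of $S_n{\rm CH}_0(X)$ is a combination $\sum_jn_j[x_j]$ with $x_j\in S_nX$; by definition of $S_nX$ each $x_j$ is rationally equivalent in $X$ to the points of some $n$-dimensional irreducible constant cycle subvariety (orbits are isotropic, hence of dimension $\leq n$, so an $n$-dimensional orbit contains such a subvariety), so the key step gives $[x_j]=o_L$ and thus $\sum_jn_j[x_j]=(\sum_jn_j)\,o_L$, which vanishes precisely when $\sum_jn_j=0$, i.e.\ when this cycle lies in ${\rm CH}_0(X)_{hom}={F'}^1_{BB}{\rm CH}_0(X)$ (cf.\ Remark~\ref{rema1jan}). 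Hence $S_n{\rm CH}_0(X)\cap {\rm CH}_0(X)_{hom}=0$, and the degree map $S_n{\rm CH}_0(X)\to {\rm CH}_0(X)/{\rm CH}_0(X)_{hom}\cong\mathbb{Q}$, being surjective (it sends $[o_L]$ to a nonzero class), is an isomorphism: this is Conjecture~\ref{conjSiop} for $i=n$. For (i), by \cite{chapa} a very general algebraic deformation $X$ of ${\rm Hilb}^2(S)$ carries a nonempty constant cycle Lagrangian surface, and replacing it by a connected component we may take it connected; since for such $X$ one has $\rho(X)=1$ and ${\rm Hdg}^4(X,\mathbb{Q})=\mathbb{Q}l^2\oplus\mathbb{Q}c_{tr}$, where $l$ is an ample generator of ${\rm NS}(X)$ (because $H^4(X,\mathbb{Q})=S^2H^2(X,\mathbb{Q})$ by \cite{verb} and a dimension count, while the Hodge classes in $S^2H^2(X,\mathbb{Q})_{tr}$ form the line spanned by $c_{tr}$ by invariant theory for the orthogonal group, $X$ being very general), the class of this surface is $\lambda_2l^2+\lambda_0c_{tr}$ with $\lambda_2\neq0$ by Theorem~\ref{theoisoclass}(iv), and part (ii) applies.

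The genuinely difficult ingredient is not proved here but imported: the existence, for very general deformations of ${\rm Hilb}^2(S)$, of a constant cycle Lagrangian surface \cite{chapa} (needed for (i)), and, for (ii), the very hypothesis that $X$ carries a connected constant cycle Lagrangian subvariety with the prescribed class — which is the hard point in any concrete instance. Inside the argument, the only steps requiring care are the localization of the refined intersection product $[L]\cdot[W]$ in ${\rm CH}_0(L\cap W)$, which is what makes the double push-forward legitimate, and, for (i), the computation of ${\rm Hdg}^4(X,\mathbb{Q})$ for a very general deformation of ${\rm Hilb}^2(S)$.
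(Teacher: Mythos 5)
Your proof is correct and follows essentially the same route as the paper: in both cases one reduces to producing a connected constant cycle Lagrangian $L$ whose class is $\lambda_n l^n+\ldots$ with $\lambda_n\neq 0$ (via Theorem \ref{theoisoclass}(iv)), shows as in the proof of Theorem \ref{theocondcond} that $c_{tr}$ restricts to zero on any isotropic constant cycle subvariety so that $\deg(\Gamma\cdot L)\neq 0$, deduces $\Gamma\cap L\neq\emptyset$, and concludes that $S_n{\rm CH}_0(X)=\mathbb{Q}$, hence meets ${\rm CH}_0(X)_{hom}$ trivially (Remark \ref{rema1jan}). Your refined-intersection double push-forward is a slightly heavier packaging of the same nonemptiness argument the paper uses, but it is sound.
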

\begin{proof} By Remark \ref{rema1jan}, what we have to prove when $i=n$
is the equality
 $$S_n{\rm CH}_0(X)\cap {\rm CH}_0(X)_{hom}=0.$$
  In
both cases (i) and (ii), we have the existence of a {\it connected}
Lagrangian constant cycle subvariety $W\subset X$ of class
$w=\lambda_nl^n+\lambda_{n-2}c_{tr}l^{n-2}+\ldots$, where $l$ is an
ample divisor class on $X$ and $\lambda_n\not=0$. (In the case (i),
this is because all Lagrangian surfaces have their class
proportional to $\lambda l^2+c_{tr}$, and in case (ii) this follows
from  our assumptions, using  Theorem \ref{theoisoclass} (iv)). The
same argument as in the proof of Theorem \ref{theocondcond} then
shows that for any Lagrangian constant cycle subvariety
$\Gamma\subset X$, one has ${\rm deg}(\Gamma\cdot W)\not=0$. It
follows that $\Gamma\cap W\not=\emptyset$, and thus that any point
of $\Gamma$ is rationally equivalent in $X$ to any point of $W$.
Thus $S_n{\rm CH}_0(X)=\mathbb{Q}$, and $S_n{\rm CH}_0(X)\cap {\rm
CH}_0(X)_{hom}=0$.

\end{proof}
Another small evidence for Conjecture \ref{conjSiop} is provided by
the following result: If $z$ is the class of any subvariety of $X$
contained in $S_iX$ and of codimension $i$, and $\Gamma\in {\rm
CH}^{2n-i}(X)$ is any cycle, $z\cdot \Gamma$ belongs to $S_i{\rm
CH}_0(X)$. Hence if $\Gamma \in F_{BB}^{2n-2i+1}{\rm CH}^{2n-i}(X)$
we have $z\cdot\Gamma\in F_{BB}^{2n-2i+1}{\rm CH}^{2n-i}(X)\cap
S_i{\rm CH}_0(X)$ and Conjecture \ref{conjSiop} then predicts  that
$$z\cdot \Gamma=0\,\,{\rm in}\,\,{\rm CH}_0(X).$$
This is in fact true, as shows the following result (which assumes  the existence of the Bloch-Beilinson filtration):
\begin{prop} Let $Z\subset X$ be a codimension $i$ subvariety contained in
$S_iX$. Then the intersection product
$$Z:F_{BB}^{2n-2i+1}{\rm CH}^{2n-i}(X)\rightarrow {\rm CH}_0(X)$$
vanishes identically.
\end{prop}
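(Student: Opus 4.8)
The plan is to exploit the factorization of the intersection product $Z\cdot(-)$ through a suitable correspondence and then apply the axioms of the Bloch–Beilinson filtration, together with the algebraically coisotropic structure of $Z$ furnished by Theorem \ref{propcoiso}. More precisely, by Theorem \ref{propcoiso}, $Z$ is algebraically coisotropic: there is a rational fibration $\phi:Z\dashrightarrow B$ with $\dim B=2n-2i$ whose general fibers are $i$-dimensional constant cycle subvarieties of $X$. After replacing $Z$ by a desingularization $\widetilde Z$ (equipped with a morphism $j:\widetilde Z\to X$ and a morphism $\widetilde\phi:\widetilde Z\to \widetilde B$ over which $\widetilde Z$ is a family of constant cycle subvarieties of $X$, in the sense of the condition (*) of Section \ref{sec2}), the intersection map $Z\cdot(-):{\rm CH}^{2n-i}(X)\to{\rm CH}_0(X)$ coincides with $j_*\circ j^*$ and factors as
\begin{eqnarray*}
{\rm CH}^{2n-i}(X)\xrightarrow{j^*}{\rm CH}_i(\widetilde Z)\xrightarrow{j_*}{\rm CH}_0(X).
\end{eqnarray*}
The key observation is that, because the fibers of $\widetilde\phi$ are constant cycle subvarieties of $X$, the push-forward $j_*:{\rm CH}_0(\widetilde Z)\to{\rm CH}_0(X)$ kills everything in the kernel of $\widetilde\phi_*:{\rm CH}_0(\widetilde Z)\to{\rm CH}_0(\widetilde B)$ — indeed a $0$-cycle supported on a fiber of $\widetilde\phi$ goes, under $j_*$, to a multiple of a single point of $X$ determined by the image point in $\widetilde B$. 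Hence $j_*$ factors through $\widetilde\phi_*$, and the composite above factors through ${\rm CH}_0(\widetilde B)$, i.e. through a variety of dimension $2n-2i$.

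Next I would track the Bloch–Beilinson level. The class $\Gamma\in F_{BB}^{2n-2i+1}{\rm CH}^{2n-i}(X)$ pulls back under the correspondence $j$ (more precisely, under $\Gamma_j\in{\rm CH}(X\times\widetilde Z)$, viewed with the appropriate twist) to a class in $F_{BB}^{2n-2i+1}{\rm CH}_i(\widetilde Z)$ by axiom \ref{itemcorr}. Pushing forward to ${\rm CH}_0(\widetilde B)$ via $\widetilde\phi$ again preserves the filtration level, so we land in $F_{BB}^{2n-2i+1}{\rm CH}_0(\widetilde B)$. But $\widetilde B$ has dimension $2n-2i$, so by the (conjectural, but here assumed) vanishing axiom item~4 together with Remark \ref{remanoodd}-type reasoning — or rather directly: $F^{j}_{BB}{\rm CH}_0(Y)$ for a variety $Y$ of dimension $d$ vanishes once $j\geq d+1$, since $F^{j}_{BB}{\rm CH}_0(Y)$ is governed by $H^{2d-j}(Y)$ which is zero for $2d-j<0$ — we get $F_{BB}^{2n-2i+1}{\rm CH}_0(\widetilde B)=0$. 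Chasing back through the factorization, $Z\cdot\Gamma=0$ in ${\rm CH}_0(X)$.

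The main obstacle, and the point requiring care, is making precise the claim that $j_*$ factors through $\widetilde\phi_*$ at the level of $0$-cycles with the right behaviour on the Bloch–Beilinson filtration. One must argue, using Mumford's theorem (Lemma \ref{propmum}) and the correspondence bookkeeping as in the proof of that lemma, that the correspondence $\Gamma_j\in{\rm CH}(X\times\widetilde Z)$ composed with the graph of $j$ equals (up to the generically finite normalization) a correspondence that factors as $\widetilde\phi$ composed with a correspondence between $\widetilde B$ and $X$; this is exactly the mechanism of the proof of Lemma \ref{propmum}, applied not to a single form but to the whole Chow-theoretic identity $\Gamma_*=\Gamma'_*$ on ${\rm CH}_0$. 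Once this factorization through a correspondence supported on (a variety birational to) $\widetilde B\times X$ is in place, axiom \ref{itemcorr} and the dimension vanishing for ${\rm CH}_0$ of a $(2n-2i)$-dimensional variety finish the argument. I would also need to handle the mild subtlety that $\phi$ is only rational, which is dispatched by passing to a resolution of the graph of $\phi$ and using that ${\rm CH}_0$ is a birational invariant of smooth projective varieties.
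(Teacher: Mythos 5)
Your proposal is correct and follows essentially the same route as the paper: desingularize $Z$, use the coisotropic fibration $p:\widetilde{Z}\rightarrow B$ with constant cycle fibers from Theorem \ref{propcoiso}, (ii) to factor $\tilde{i}_*$ through ${\rm CH}_0(B)$ via a multisection (the Lemma \ref{propmum} bookkeeping you invoke is exactly the paper's argument), and conclude from $F_{BB}^{2n-2i+1}{\rm CH}_0(B)=0$ since ${\rm dim}\,B=2n-2i$. The only blemish is the misprint ${\rm CH}_i(\widetilde{Z})$, which should read ${\rm CH}_0(\widetilde{Z})={\rm CH}^{2n-i}(\widetilde{Z})$; your subsequent use of $\widetilde{\phi}_*$ shows this is what you meant.
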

\begin{proof} We use Theorem
\ref{propcoiso}, (ii) which says that a desingularization
$\widetilde{Z}\stackrel{\tilde{i}}{\rightarrow } X$ of $Z$ admits a
fibration
$$p: \widetilde{Z}\rightarrow B$$
where ${\rm dim}\,B=2n-2i$, and the $i$-dimensional fibers of $p$
map via $\tilde{i}$ to constant cycle subvarieties of $X$. It
follows that the morphism
$$\tilde{i}_*:{\rm CH}_0(\widetilde{Z})\rightarrow {\rm CH}_0(X)$$
factors through ${\rm CH}_0(B)$. Indeed, let $B'\subset
\widetilde{Z}$ be generically finite of degree $N$ over $B$, and let
$p':B'\rightarrow B$ be the restriction of $p$. Then for any point
$z\in \widetilde{Z}$, one has
$$\tilde{i}_*(z)=\frac{1}{N}\tilde{i}_*({p'}^*(p_*z))\,\,{\rm in}\,\,{\rm CH}_0(X),$$
which provides the desired factorization.

Now, for $\Gamma\in F^{2n-2i+1}{\rm CH}^{2n-i}(X)$ one has
$Z\cdot\Gamma=\tilde{i}_*(\tilde{i}^*\Gamma)$. As
$\tilde{i}^*\Gamma\in F^{2n-2i+1}{\rm CH}_0(\widetilde{Z})$, one has
 $p_*(\tilde{i}^*\Gamma)\in F^{2n-2i+1}{\rm CH}_0(B)$ where the last space is equal to
 $\{0\}$ because
${\rm dim}\,B=2n-2i$. By the factorization above, this implies that
$\tilde{i}_*(\tilde{i}^*\Gamma)=0$ in ${\rm CH}_0(X)$.

\end{proof}
 Conjecture \ref{conjSiop}
would allow to construct the Beauville decomposition on ${\rm
CH}_0(X)$ as
$${\rm CH}_0(X)_{2k}= S_{n-k}{\rm CH}_0(X)\cap {F'}_{BB}^{k}{\rm CH}_0(X),\,\,{\rm CH}_0(X)_{2k+1}=0,$$
and we would have  equivalently
$$ S_i{\rm CH}_0(X)=\oplus_{ j\leq n-i} {\rm CH}_0(X)_{2j},$$
$${F'}_{BB}^i{\rm CH}_0(X)=\oplus_{j\geq i}{\rm CH}_0(X)_{2j}.$$
  If we try to extend this construction to other cycles,
having in mind that the Beauville decomposition is supposed to be
multiplicative and have the divisor classes in its $0$-th piece, the
following proposal seems to be compatible with Theorem
\ref{theocondcond} and the previous assignments:

From a decomposition \begin{eqnarray}
\label{eq*29}{\rm CH}^k(X)=\oplus_{0\leq j\leq k}{\rm CH}^k(X)_j
\end{eqnarray}
with
\begin{eqnarray}
\label{eq**29}F^i_{BB}{\rm CH}^k(X)=\oplus_{i\leq  j\leq k}{\rm CH}^k(X)_j,
\end{eqnarray} one can
construct another decreasing filtration
$T$  defined by
\begin{eqnarray}
\label{eq***29}
 T^i{\rm CH}^k(X)=\oplus_{0\leq j\leq k-i } {\rm CH}^k(X)_j.
\end{eqnarray}
One clearly has
$$ {\rm CH}^k(X)_j=T^{k-j}{\rm CH}^k(X)\cap F^j_{BB}{\rm CH}^k(X).$$
Conversely, a decreasing filtration $T$ which is opposite
to the filtration $F_{BB}$ in the sense
that
$$T^l{\rm CH}^k\cong {\rm CH}^k/ F_{BB}^{k+1-l}{\rm CH}^k$$
 leads to a decomposition (\ref{eq*29}) satisfying
(\ref{eq**29}) and (\ref{eq***29}).

For $0$-cycles, we put $T^{2i}{\rm CH}_0(X)=S_i{\rm CH}_0(X)$ and assuming
Conjecture \ref{conjSiop},  we have the desired decomposition.

We now propose the following assignment for the filtration $T$:

\vspace{0.5cm}

{\it (**)  Let $\Gamma\subset X$ be an $i$-dimensional constant
cycle subvariety. Then its class $\gamma\in {\rm CH}^{2n-i}(X)$
belongs to $T^i{\rm CH}(X)$.}

\vspace{0.5cm}

For a filtration $T$ satisfying the assignment above to be opposite to
the Bloch-Beilinson filtration, one needs the following:

\begin{conj}\label{conjpresquempty} Let
$C_i(X)\subset {\rm CH}_i(X)={\rm CH}^{2n-i}(X)$ be the
$\mathbb{Q}$-vector space generated by constant cycle subvarieties
of dimension $i$. Then the Bloch-Beilinson filtration $F_{BB}$
satisfies
\begin{eqnarray}\label{eq1jan}F_{BB}^{2n-2i+1}C_i(X)=0.
\end{eqnarray}
\end{conj}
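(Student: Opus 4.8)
The plan is to assume, as the surrounding discussion does, that a Bloch--Beilinson filtration exists, and to detect elements of $C_i(X)$ that lie deep in $F_{BB}$ by intersecting them with cycles and invoking the $0$-cycle opposite-ness already formulated in Conjecture \ref{conjSiop}. I first record the two structural inputs. For an $i$-dimensional constant cycle subvariety $W$ with resolution $j\colon\widetilde W\to X$, Mumford's theorem (in the form of Corollary \ref{coroiso}, i.e. Lemma \ref{propmum} with $B$ a point) shows that all points of $W$ are rationally equivalent in $X$ to one class $o_W$; equivalently $j_*\colon\mathrm{CH}_0(\widetilde W)\to\mathrm{CH}_0(X)$ kills $\mathrm{CH}_0(\widetilde W)_{hom}$ and factors through the degree map followed by $1\mapsto o_W$. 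Second, since odd-degree holomorphic forms vanish on a hyper-K\"ahler manifold, the filtration jumps only in even degree (as in Remark \ref{remanoodd}), so on $\mathrm{CH}_0$ one has $F_{BB}^{2n-2i+1}=F_{BB}^{2n-2i+2}={F'}_{BB}^{\,n-i+1}$, which is exactly the index occurring in Conjecture \ref{conjSiop}.

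Given $\eta=\sum_a n_a[W_a]\in F_{BB}^{2n-2i+1}C_i(X)$, I would intersect with an ample power $l^i$, $l\in\mathrm{NS}(X)$. By the projection formula and the factorization above, $l^i\cdot[W_a]=j_{a*}\bigl((j_a^*l)^i\bigr)=d_a\,o_{W_a}$ with $d_a=\int_{W_a}l^i>0$, so $l^i\cdot\eta=\sum_a n_a d_a\,o_{W_a}$ is a $0$-cycle supported on $\bigcup_a W_a\subset S_iX$ and hence lies in $S_i\mathrm{CH}_0(X)$. On the other hand $l^i\in F_{BB}^0$, so multiplicativity gives $l^i\cdot\eta\in F_{BB}^{2n-2i+1}\mathrm{CH}_0(X)={F'}_{BB}^{\,n-i+1}\mathrm{CH}_0(X)$. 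Conjecture \ref{conjSiop} says that $S_\cdot$ and $F'_{BB}$ are opposite on $\mathrm{CH}_0$, whence $S_i\mathrm{CH}_0(X)\cap{F'}_{BB}^{\,n-i+1}\mathrm{CH}_0(X)=0$; therefore $l^i\cdot\eta=0$. Running the same computation with $l^i$ replaced by any $\alpha\in\mathrm{CH}^i(X)$ yields $\eta\cdot\alpha=\sum_a n_a\bigl(\textstyle\int_X[W_a]\cup[\alpha]\bigr)o_{W_a}=0$ in $\mathrm{CH}_0(X)$ for every test cycle $\alpha$.

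For $i=n$ this already closes the argument in spirit, and it is the case where I expect an actual proof. There $F_{BB}^{2n-2i+1}=F_{BB}^1=\mathrm{CH}^n(X)_{hom}$, so Conjecture \ref{conjpresquempty} is literally the injectivity of the cycle class map on $C_n(X)$, i.e. Conjecture \ref{conjvar}, which is established for the Fano variety of lines in Proposition \ref{propconjvar}; the pairing into $\mathrm{CH}_0=\mathrm{CH}_n$ is faithful in this top-dimensional range, so the vanishing just obtained suffices.

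The hard part will be the range $i<n$. The previous paragraph shows that \emph{every} $\mathrm{CH}_0$-valued pairing of $\eta$ vanishes, but these pairings only ever record the finite combination of point classes $\{o_{W_a}\}$: by the very constant-cycle factorization, each $[W_a]\cdot\alpha$ is a multiple of $o_{W_a}$, so the map $\eta\cdot(-)\colon\mathrm{CH}^i(X)\to\mathrm{CH}_0(X)$ cannot recover $\eta$ as an element of $\mathrm{CH}_i(X)$. Bridging this $\mathrm{CH}_0$-shadow back to $\eta$ itself is the genuine obstacle. I would attempt to upgrade the argument using the refined, Murre-type form of the axioms, constructing a Chow--K\"unneth decomposition of $X$ compatible with $F_{BB}$ and showing that the pushforward of the fundamental class $[\widetilde W_a]$ through a morphism whose $\mathrm{CH}_0$-image is a single point carries no component in the projectors governing $H^m(X)$ for $2n-i\le m\le 2n-1$. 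This compatibility is equivalent to the assignment (**) placing $C_i(X)$ inside $T^i$; it is the open input, and outside the $i=n$ case and the $\mathrm{Hilb}^n(K3)$ and Fano-of-lines models I do not expect an unconditional proof.
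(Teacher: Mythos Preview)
The statement is a \emph{conjecture}; the paper does not prove it. What the paper offers is a discussion of why it is reasonable (Lemma \ref{leprin}, Remark \ref{remaarevoir}) and a verification in one example (Proposition \ref{propconjvar} for the Fano variety of lines). Your proposal should therefore be read against that discussion, not against a proof.

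Your first two paragraphs are correct and match the paper precisely. The deduction $l^i\cdot\eta=0$ in $\mathrm{CH}_0(X)$, conditional on Conjecture \ref{conjSiop}, is exactly Lemma \ref{leprin}\,(ii), phrased there via the coisotropic fibration $Z\to B$ rather than a finite sum over $W_a$'s. The paper is equally explicit that this only yields the $\mathrm{CH}_0$-shadow of $\eta$, and that lifting back to $\eta\in\mathrm{CH}_i(X)$ requires the extra hypothesis of equality of kernels in Lemma \ref{leprin}\,(i), which is not supplied in general. Your identification of this as the ``hard part'' for $i<n$ is accurate and agrees with the paper's own assessment.

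Your $i=n$ paragraph, however, contains a genuine gap. You assert that ``the pairing into $\mathrm{CH}_0=\mathrm{CH}_n$ is faithful in this top-dimensional range, so the vanishing just obtained suffices'', but $\mathrm{CH}_0(X)\neq\mathrm{CH}_n(X)$ (since $\dim X=2n$), and the map $l^n\cdot(-)\colon\mathrm{CH}^n(X)\to\mathrm{CH}_0(X)$ is not known to be injective. The paper does \emph{not} prove Conjecture \ref{conjvar} in general; it leaves it as a conjecture. Proposition \ref{propconjvar} treats only the Fano variety $F$, and its proof does not rely on any faithfulness of intersection pairings: it uses the Shen--Vial incidence correspondence $I$ together with the identity $I_*(g^2\sigma)=-6\sigma$ in $\mathrm{CH}^2(F)$, which inverts the passage to $\mathrm{CH}_0$ on the homologically trivial part. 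Without such an explicit inverse correspondence, your argument does not close even for $i=n$.
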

Note that the general finiteness condition for the Bloch-Beilinson
filtration (see Section \ref{secBB}) is $$F^{2n-i+1}{\rm CH}^{2n-i}=0,$$  which is weaker than
(\ref{eq1jan}). Note also that Conjecture \ref{conjpresquempty}
generalizes Conjecture \ref{conjvar} in the case of Lagrangian
constant cycle subvarieties (i.e. the case $i=n$). Indeed, for
$i=n$, one has $2n-2i+1=1$, hence $F_{BB}^{2n-2i+1}{\rm
CH}^{2n-i}(X)={\rm CH}^{2n-i}(X)_{hom}$, and Conjecture
\ref{conjpresquempty} says that the cycle class map is injective on
the subspace of ${\rm CH}^n(X)$ generated by classes of Lagrangian
constant cycle subvarieties.
\begin{rema}{\rm The three conjectures \ref{conjfiltopp}, \ref{conjclasscoiso} and
\ref{conjpresquempty} can be unified as follows: Consider the
inclusion $S_iX\subset X$, and let $S'_iX\subset S_iX$ be the union
of the components of $S_iX$ which are of codimension $i$ in $X$.
According to Theorem \ref{propcoiso}, each component $Z$ of $S'_iX$
(or rather a birational model of it) admits a fibration
$Z\rightarrow B$ into $i$-dimensional constant cycles subvarieties,
with ${\rm dim}\,B=2n-2i$. For each such $Z$, we thus have a
correspondence between $B$ and $X$ given by the two maps
$$\tilde{i}:Z\rightarrow X,\,\,p:Z\rightarrow B,$$ and  we thus have three
maps, namely :

1) $\overline{\tilde{i}_*}:{\rm CH}_0(B)\rightarrow
{\rm CH}_0(X)$ factoring through $p_*:{\rm CH}_0(Z)\rightarrow{\rm
CH}_0(B)$ the natural map $\tilde{i}:{\rm CH}_0(Z)\rightarrow {\rm
CH}_0(X)$ using the fact that the fibers of $p$ map via $\tilde{i}$
to constant cycles subvarieties of $X$;

 2)
$$Z_*=\tilde{i}_*\circ p^*:{\rm CH}_0(B)\rightarrow {\rm CH}_i(X),$$
whose image is contained in the subgroup $C_i(X)$ generated by
classes of constant cycles subvarieties of dimension $i$;

3)
 $Z_*=\tilde{i}_*\circ p^*:{\rm CH}^0(B)\rightarrow {\rm CH}^i(X)$,
whose image belongs to the subgroup $C_{2n-i}(X)$.

Observing that the Bloch-Beilinson filtration on ${\rm CH}(B)$ satisfies
$$F^{2n-2i+1}_{BB}{\rm CH}_0(B)=0,\,F^1{\rm CH}^0(B)=0,$$
for all $Z,\,B$'s as above and taking the disjoint union of all
components $Z_j\subset X$ of $S'_iX$, and of the corresponding
$B_j$'s, our conjectures can be unified and even strengthened saying
that the three maps above are strict for the Bloch-Beilinson
filtrations.}
\end{rema}
\begin{rema}\label{remaarevoir}{\rm There are remarkable relations between  these three
maps, namely assuming $B$ is connected, there are coefficients
$\mu$, $\nu_l$ with $\nu_l\not=0$ depending on an ample class $l$
such that
\begin{eqnarray} Z_*(\alpha)\cdot
Z_*(\gamma)=\mu\overline{\tilde{i}_*}(\alpha\cdot\gamma)\,\,{\rm
in}\,\,{\rm CH}_0(X),
\end{eqnarray}
for any $\alpha\in {\rm CH}^0(B),\,\gamma\in {\rm CH}_0(B)$, and
\begin{eqnarray}\label{eqpri1} l^i\cdot
Z_*(\gamma)=\nu_l\overline{\tilde{i}_*}(\gamma)\,\,{\rm
in}\,\,{\rm CH}_0(X),
\end{eqnarray}
for any $\gamma\in {\rm CH}_0(B)$.

Both relations follow immediately from the fact that the fibers of
$p$ are constant cycles subvarieties of $X$, and they just say that
that for any such fiber $Z_t$, the intersection $Z\cdot Z_t$, resp.
$l^i\cdot Z_t$ is proportional to $\overline{\tilde{i}_*}(t)$ in
${\rm CH}_0(X)$.
 } \end{rema}
 The relation (\ref{eqpri1}) provides a close link between  Conjectures \ref{conjSiop}
 and Conjecture \ref{conjpresquempty} (we assume here the existence of a Bloch-Beilinson filtration).
 \begin{lemm}
 \label{leprin}(i)  One has ${\rm Ker}\,Z_*\subset {\rm Ker}\, \overline{\tilde{i}_*}\subset{\rm CH}_0(B)$.

  (ii) Assuming Conjecture \ref{conjSiop}, for any codimension $i$
 component $Z\subset X$ of $S_iX$, and any $0$-cycle $\gamma\in {\rm CH}_0(B)$,
 one has  the implications
 $$Z_*\gamma\in F^{2n-2i+1}{\rm CH}_i(X)\Rightarrow \,\,\overline{\tilde{i}_*}(\gamma)=0\,\,{\rm
in}\,\,{\rm CH}_0(X).$$
 In particular, if furthermore one has equality in (i), then
 $$Z_*\gamma\in F^{2n-2i+1}{\rm CH}_i(X)\Rightarrow Z_*\gamma=0\,\,{\rm in}\,\,{\rm CH}_i(X),$$
 which is essentially Conjecture \ref{conjpresquempty}.
 \end{lemm}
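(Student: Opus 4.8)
The plan is to extract everything from the relation (\ref{eqpri1}), which says $l^i\cdot Z_*(\gamma)=\nu_l\,\overline{\tilde{i}_*}(\gamma)$ in ${\rm CH}_0(X)$ for an ample class $l$ with $\nu_l\neq 0$, combined with the multiplicativity axiom of the Bloch--Beilinson filtration and Conjecture \ref{conjSiop}. For part (i) there is essentially nothing to do: if $\gamma\in{\rm Ker}\,Z_*$, then intersecting $Z_*\gamma=0$ with $l^i$ gives $\nu_l\,\overline{\tilde{i}_*}(\gamma)=0$ in ${\rm CH}_0(X)$ by (\ref{eqpri1}), hence $\overline{\tilde{i}_*}(\gamma)=0$ since $\nu_l\neq 0$. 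So ${\rm Ker}\,Z_*\subset{\rm Ker}\,\overline{\tilde{i}_*}$.

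For part (ii) I would first record that $\overline{\tilde{i}_*}$ has image inside $S_i{\rm CH}_0(X)$: since $p\colon Z\to B$ is dominant, $p_*$ is surjective on ${\rm CH}_0$, so writing $\gamma=p_*(z)$ one gets $\overline{\tilde{i}_*}(\gamma)=\tilde{i}_*(z)$, which is supported on $Z\subset S_iX$ and hence lies in $S_i{\rm CH}_0(X)$ by Definition \ref{defifilt}. Next, assuming $Z_*\gamma\in F_{BB}^{2n-2i+1}{\rm CH}_i(X)=F_{BB}^{2n-2i+1}{\rm CH}^{2n-i}(X)$ and using that $l^i\in F_{BB}^0{\rm CH}^i(X)$ (an ample class is not homologically trivial), multiplicativity gives $l^i\cdot Z_*\gamma\in F_{BB}^{2n-2i+1}{\rm CH}_0(X)$. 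Because $X$ is hyper-K\"ahler, $H^{j,0}(X)=0$ for odd $j$, so the Bloch--Beilinson filtration on ${\rm CH}_0(X)$ jumps only in even degree and $F_{BB}^{2n-2i+1}{\rm CH}_0(X)=F_{BB}^{2n-2i+2}{\rm CH}_0(X)={F'}_{BB}^{n-i+1}{\rm CH}_0(X)$. Feeding this back through (\ref{eqpri1}) yields $\overline{\tilde{i}_*}(\gamma)\in S_i{\rm CH}_0(X)\cap {F'}_{BB}^{n-i+1}{\rm CH}_0(X)$; Conjecture \ref{conjSiop} asserts that the natural map $S_i{\rm CH}_0(X)\to {\rm CH}_0(X)/{F'}_{BB}^{n-i+1}{\rm CH}_0(X)$ is an isomorphism, hence in particular injective, so that intersection is zero and $\overline{\tilde{i}_*}(\gamma)=0$. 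The ``in particular'' statement is then immediate: equality in (i) gives $\gamma\in{\rm Ker}\,\overline{\tilde{i}_*}={\rm Ker}\,Z_*$, so $Z_*\gamma=0$ in ${\rm CH}_i(X)$. To connect this with Conjecture \ref{conjpresquempty} I would note that $Z_*({\rm CH}_0(B))$ is spanned by classes of the constant cycle subvarieties $\tilde{i}(p^{-1}(t))$ and thus lies in $C_i(X)$; running the argument over all codimension $i$ components $Z$ of $S_iX$ then shows that $F_{BB}^{2n-2i+1}$ annihilates $\sum_Z Z_*({\rm CH}_0(B))$, which is all of $C_i(X)$ provided every $i$-dimensional constant cycle subvariety arises as a leaf of the isotropic foliation of some such $Z$ (it is isotropic by Corollary \ref{coroiso}, hence tangent to ${\rm Ker}\,\sigma_{\mid Z}$ whenever it lies in a codimension $i$ component) --- this last, purely geometric, point being the only gap, which is why the statement is only \emph{essentially} Conjecture \ref{conjpresquempty}.

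Since both (\ref{eqpri1}) and Conjecture \ref{conjSiop} are available, I do not expect any real obstacle; the two places needing care are the verification that $\overline{\tilde{i}_*}$ genuinely lands in $S_i{\rm CH}_0(X)$ (which relies on surjectivity of $p_*$ on zero-cycles) and the degree shift $F_{BB}^{2n-2i+1}{\rm CH}_0(X)={F'}_{BB}^{n-i+1}{\rm CH}_0(X)$, which uses the vanishing of odd-degree holomorphic forms on a hyper-K\"ahler manifold.
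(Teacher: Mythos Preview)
Your proof is correct and follows the same route as the paper: both parts hinge on relation (\ref{eqpri1}), and for (ii) you combine it with multiplicativity of the Bloch--Beilinson filtration, the inclusion $\overline{\tilde{i}_*}({\rm CH}_0(B))\subset S_i{\rm CH}_0(X)$, and Conjecture \ref{conjSiop}, exactly as the paper does. You spell out two points the paper leaves implicit --- the justification that $\overline{\tilde{i}_*}$ lands in $S_i{\rm CH}_0(X)$ and the identification $F_{BB}^{2n-2i+1}{\rm CH}_0(X)={F'}_{BB}^{n-i+1}{\rm CH}_0(X)$ --- and your final paragraph on the link with Conjecture \ref{conjpresquempty} goes beyond what the paper writes out.
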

 \begin{proof} Indeed, (i) is an obvious consequence of (\ref{eqpri1}).
 As for (ii), if $Z_*\gamma\in F^{2n-2i+1}{\rm CH}_i(X)$, then
 $\overline{\tilde{i}_*}(\gamma)\in F^{2n-2i+1}{\rm CH}_0(X)$ by (\ref{eqpri1}).
 Conjecture \ref{conjSiop} says now that
 $F^{2n-2i+1}{\rm CH}_0(X)\cap S_i{\rm CH}_0(X)=0$, hence that
  $\overline{\tilde{i}_*}({\rm CH}_0(B))\cap F^{2n-2i+1}{\rm CH}_0(X)=0$. This implies that
  $\overline{\tilde{i}_*}(\gamma)=0$.
 \end{proof}
 From a slightly different point of view, let us explain how Conjecture \ref{conjpresquempty}
would lead to multiplicativity statements for the associated
decomposition. First of all, let us observe the following results
along the same lines as Theorem \ref{theocondcond}:
\begin{lemm} \label{theofin}(i) Let $\Gamma\subset X$ be a constant cycle subvariety
of dimension $i$. Then for any $l\in {\rm NS}(X)={\rm Pic}(X)$,
$l^i\Gamma\in S_i{\rm CH}_0(X)=T^{2i}{\rm CH}_0(X)$.

(ii) Assuming Conjecture \ref{conjgen}, $S_i{\rm CH}_0(X)=T^{2i}{\rm
CH}_0(X)$ is generated by products $Z\cdot \Gamma$, where $Z$ is a
codimension $i$ subvariety of $X$ contained in $ S_iX$, and $\Gamma$
a constant cycle subvariety of $X$ of dimension $i$.
\end{lemm}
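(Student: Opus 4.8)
The plan is to prove (i) directly and to obtain (ii) by essentially repeating the argument of Theorem \ref{theocondcond}: in both cases the mechanism is the same, namely using the constant-cycle hypothesis to collapse an intersection product in ${\rm CH}_0(X)$ to a fixed multiple of the class of a single point. Note first that the equality $S_i{\rm CH}_0(X)=T^{2i}{\rm CH}_0(X)$ appearing in the statement holds by definition, $T^{2i}{\rm CH}_0(X)$ having been set equal to $S_i{\rm CH}_0(X)$.

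For (i), I would fix a desingularization $\tau\colon\widetilde\Gamma\to X$ and a line bundle $L$ with $c_1(L)=l$, and write $l^i\Gamma=\tau_*\bigl(c_1(\tau^*L)^i\cap[\widetilde\Gamma]\bigr)\in{\rm CH}_0(X)$, the image of a $0$-cycle of degree $d:=\int_{\widetilde\Gamma}c_1(\tau^*L)^i$. Since $\Gamma$ is a constant cycle subvariety, all of its points represent a single class $[x_0]\in{\rm CH}_0(X)$, so the composite $\widetilde\Gamma\to\Gamma\hookrightarrow X$ sends every $0$-cycle of degree $d$ to $d\,[x_0]$; hence $l^i\Gamma=d\,[x_0]$. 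As $\dim\Gamma=i$ and $\Gamma\subseteq O_p$ for each $p\in\Gamma$, every point of $\Gamma$ lies in $S_iX$, so $[x_0]\in S_i{\rm CH}_0(X)$ and therefore $l^i\Gamma\in S_i{\rm CH}_0(X)=T^{2i}{\rm CH}_0(X)$.

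For (ii), recall that $S_i{\rm CH}_0(X)$ is spanned by the classes $[x]$ with $x\in S_iX$, so it suffices to express each such $[x]$ as a $\mathbb{Q}$-combination of products of the required shape. Fix $x\in S_iX$, choose an $i$-dimensional constant cycle subvariety $W_x\subset X$ all of whose points are rationally equivalent to $x$, and set $\Gamma:=W_x$; fix also an ample class $l\in{\rm NS}(X)$. Exactly as at the start of the proof of Theorem \ref{theocondcond}, Conjecture \ref{conjgen} together with Theorem \ref{theoisoclass} yields a combination $z=\sum_j n_j[Z_j]\in{\rm CH}^i(X)$, with each $Z_j\subset S_iX$ a subvariety of codimension $i$, whose cohomology class is a nonzero coisotropic polynomial $[z]=\lambda_0l^i+\lambda_1c_{tr}l^{i-2}+\dots$ in $c_{tr}$ and $l$ with $\lambda_0\neq0$ (the non-vanishing of $\lambda_0$ being Theorem \ref{theoisoclass}(iv)). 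Then one runs the computation of that proof: $W_x$ is isotropic by Corollary \ref{coroiso}, so on $\widetilde{W_x}$ the class $[\sigma]$, hence all of $H^2(X,\mathbb{Q})_{tr}$ (by simplicity of this Hodge structure, $h^{2,0}(X)=1$), restricts to $0$, whence $c_{tr}\in S^2H^2(X,\mathbb{Q})_{tr}$ restricts to $0$ in $H^4(\widetilde{W_x},\mathbb{Q})$; so every $c_{tr}$-term of $[z]$ integrates to $0$ against $[W_x]$ and
\[
\deg(W_x\cdot z)=\lambda_0\int_{\widetilde{W_x}}\tau^*c_1(l)^i\neq0 .
\]
On the other hand $z\cdot W_x=\sum_j n_j\,(Z_j\cdot W_x)$ in ${\rm CH}_0(X)$, and each $Z_j\cdot W_x$ is a $0$-cycle supported on the constant cycle subvariety $W_x$, hence equals $\deg(Z_j\cdot W_x)\,[x]$; summing gives $z\cdot W_x=\deg(W_x\cdot z)\,[x]$, so
\[
[x]=\frac{1}{\deg(W_x\cdot z)}\sum_j n_j\,(Z_j\cdot W_x),
\]
a $\mathbb{Q}$-combination of products of the codimension $i$ subvarieties $Z_j\subset S_iX$ with the $i$-dimensional constant cycle subvariety $W_x$. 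This proves (ii).

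I do not anticipate a serious obstacle: (i) is elementary and (ii) is a faithful adaptation of Theorem \ref{theocondcond}. The one point deserving care is that every intersection appearing is a priori only a $0$-cycle with a prescribed degree living in ${\rm CH}_0(X)$, and it is precisely the constant-cycle property of $\Gamma$ (respectively $W_x$) that forces it to equal the corresponding multiple of $[x_0]$ (respectively $[x]$), while the non-vanishing of the relevant degree is a purely cohomological statement resting on Theorem \ref{theoisoclass}(iv). The hypothesis Conjecture \ref{conjgen} is invoked in (ii) solely to guarantee that the coisotropic class supplied by Theorem \ref{theoisoclass} is represented by an algebraic cycle supported on codimension $i$ components of $S_iX$; making that step unconditional is Conjecture \ref{conjgen} itself and is not attempted here.
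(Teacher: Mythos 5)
Your proof is correct and follows essentially the same route as the paper: (i) by observing the $0$-cycle is supported on $\Gamma\subset S_iX$, and (ii) by rerunning the argument of Theorem \ref{theocondcond} with the constant cycle subvariety $W_x$ through $x$, using Conjecture \ref{conjgen} together with Theorem \ref{theoisoclass} (iv) and the isotropy of $W_x$ to guarantee $\deg(z\cdot W_x)\neq 0$, so that $[x]$ becomes a rational combination of the products $Z_j\cdot W_x$. The only point you leave tacit is the easy reverse inclusion that any product $Z\cdot\Gamma$ with $Z\subset S_iX$ of codimension $i$ is supported on $Z$ and hence lies in $S_i{\rm CH}_0(X)$, which is exactly how the paper closes its proof.
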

\begin{proof}
(i) Indeed, as  all points of $\Gamma$ belong to $S_iX$ by definition, so does the
$0$-cycle $l^i\Gamma$ which is supported on $\Gamma$.

(ii) Let $x\in S_iX$ and  let $\Gamma_x$ be a constant cycle
subvariety of dimension $i$. Then for any cycle $z\in {\rm CH}^i(X)$
such that ${\rm deg}\,(z\cdot \Gamma_x)\not=0$, $z\cdot \Gamma_x \in
{\rm CH}_0(X)$ is a nonzero multiple of the class of any point of
$X$ supported on $\Gamma_x$, hence of $x$. Assuming Conjecture
\ref{conjgen}, the same argument as in the proof of Theorem
\ref{theocondcond} shows that there is a combination $z\in C_i(X)$
of classes of codimension $i$ subvarieties of $X$ contained in
$S_iX$, such that ${\rm deg}\,(z\cdot \Gamma_x)\not=0$ and thus the
class of $x$ is a multiple in ${\rm CH}_0(X)$ of $z\cdot \Gamma$,
which shows that $S_i{\rm CH}_0(X)$ is generated by products $Z\cdot
\Gamma$, where $Z\in C_{2n-i}(X)$, and $\Gamma\in C_i(X)$, since by
definition $S_i{\rm CH}_0(X)$ is generated by the classes of  points
in $S_iX$. The other inclusion is obvious since any cycle $Z\cdot
\Gamma$ with $Z\subset S_iX$ of codimension $i$ in $X$ is supported
on $Z$, hence belongs to $S_i{\rm CH}_0(X)$.

\end{proof}
Let now $\Gamma$ be a constant cycle subvariety of dimension $i$.
Then according to (**), the cycle $\Gamma\in {\rm CH}^{2n-i}(X)$
should belong to $T^i{\rm CH}^{2n-i}(X)=\oplus_{j\leq 2n-2i} {\rm
CH}^{2n-i}(X)_j$. According to (*), the class $z$ of any subvariety
of $X$ contained in $S_iX$ and of codimension $i$ should be in ${\rm
CH}^i(X)_0$. By multiplicativity of the conjectural Beauville
decomposition, one should have \begin{eqnarray}
\label{eqlille}z\cdot \Gamma \in \oplus_{j\leq 2n-2i} {\rm
CH}_0(X)_j, \end{eqnarray} the right hand side being equal to
$T^{2i}{\rm CH}_0(X)=S_i{\rm CH}_0(X)$. Equation (\ref{eqlille}) is
in fact satisfied by the easy inclusion in Lemma \ref{theofin},
(ii), thus providing some evidence for the multiplicativity of the
decomposition we started to construct. Similarly, if we take for $z$
a degree $i$ polynomial in divisor classes on $X$, then $z$ should
belong to  ${\rm CH}^i(X)_0$ and thus we should have according to
(**) and multiplicativity
$$z\cdot \Gamma \in \oplus_{j\leq 2n-2i} {\rm CH}_0(X)_j=S_i{\rm CH}_0(X)$$
which is proved in Lemma \ref{theofin}, (i).

\section{Examples \label{sec4}}
The purpose of this section is to collect some examples providing evidences
for the conjectures proposed in this paper.
\subsection{The case of ${\rm Hilb}(K3)$}
We first examine Conjecture \ref{conjgen} concerning the existence
of many algebraic coisotropic subvarieties obtained as codimension
$i$ components of $S_iX$.

Let us consider a very general algebraic $K3$ surface $S$, so that
${\rm NS}(S)$ has rank $1$ and is  generated by the class of $L\in
{\rm Pic}\,S$, and let $X:=S^{[n]}$. The N\'eron-Severi group ${\rm
NS}(X)$ has then rank $2$, and is generated over $\mathbb{Q}$ by the
class $e$ of the exceptional divisor and the class $l$ of the
pull-back to $X$, via the Hilbert-Chow morphism
$$s: X=S^{[n]}\rightarrow S^{(n)}$$
 of the  divisor $C+S^{(n-1)}\subset S^{(n)}$ where $C\in |L|$.

 \vspace{0.5cm}

1) Obvious examples of constant cycles subvarieties of $X$ are
provided by the fibers of  $s$. It is indeed known that these fibers
are rationally connected, so that they are constant cycles
subvarieties. We know  that for each stratum $S^{(\mu)}_0\subset
S^{(n)}$ determined by multiplicities $\mu_1,\ldots,\mu_l$ such that
$\sum_i\mu_i=n$, the inverse image $s^{-1}(S^{(\mu)}_0)$ is of
codimension $i$ in $S^{[n]}$, and fibered into constant cycles
subvarieties of dimension $i$, namely the fibers of $s$ over points
$z\in S^{(\mu)}_0$. Here the notation is as follows: The number $i$
is equal to $n-l$, and $S^{(\mu)}_0$ is the locally closed stratum
determined by $\mu$, namely
$$S^{(\mu)}_0:=\{\sum_j\mu_j x_j,\,x_j\in S,\,x_j\not=x_k,\,j\not=k\}.$$
The Zariski  closure $E_\mu$ of $s^{-1}(S^{(\mu)}_0)$ in $S^{[n]}$ is thus an example
of a codimension $i$ algebraically coisotropic subvariety of $S^{[n]}$ fibered into
$i$-dimensional constant cycles subvarieties, as studied in Section \ref{sec2}.

 \vspace{0.5cm}

1bis) With the same notation as above, let $W\subset
S^{(\mu)}_0\subset S^{(n)}$ be a codimension $j$ subvariety which is
fibered by $j$-dimensional  subvarieties $Z_t$ of $S^{(\mu)}$ which
are constant cycles for $S^{(n)}$ in the sense that for each $t$, the natural map
$Z_t\rightarrow {\rm CH}_0(S^{(n)})$ is constant. Then the locally
closed varieties $s^{-1}(Z_t)\subset S^{[n]}$ are of dimension
$j+n-l(\mu)$ and they sweep-out the locally closed subvariety
$s^{-1}(W)$ which has codimension $n-l(\mu)+j$. Hence its Zariski
 closure in $X$ is algebraically coisotropic, fibered
into constant cycles subvarieties.

\vspace{0.5cm}

2) Starting from a constant cycle curve $C\subset S$, for example a rational curve, we can
also get codimension $i$ subvarieties of $S_iX$, by taking the
image of the rational map
$$C^{(i)}\times S^{[n-i]}\dashrightarrow S^{[n]},$$
$$(z,z')\mapsto z\cup z'.$$

 \vspace{0.5cm}

3) A more subtle example (which however does not work for all
possible pairs $(i,g)$, where $L^2=2g-2$) is given by applying the
Lazarsfeld construction \cite{laz} in any possible range for the
Brill-Noether theory of smooth curves in $|L|$. Let us describe this
construction in more detail. Let $C\in |L|$ be smooth, and let
$|D|\in G_n^1(C)$ be a base-point free pencil. By the main result of
\cite{laz}, such a $D$ exists if and only if $2(g-n+1)\leq g$, that
is $g\leq 2n-2$. Let $F$ be the rank $2$ vector bundle on $S$ which
is defined as the kernel of the (surjective) evaluation map
$$ H^0(D)\otimes \mathcal{O}_S\rightarrow \mathcal{O}_C(D)$$
and let $E:=F^*$. Then
$E$ fits in an exact sequence
$$0\rightarrow H^0(D)^*\otimes \mathcal{O}_S\rightarrow E\rightarrow K_C(-D)\rightarrow 0.$$
It follows that
\begin{eqnarray} h^0(S,E)=2+g-n+1,\,{\rm deg}\, c_2(E)=n,
\end{eqnarray}
so that $0$-sets of sections of $E$ provide constant cycles
subvarieties of $S^{[n]}$ of dimension $g+2-n$ (we observe here that
$E$ satisfies $h^0(S,E(-L))=0$, hence any nonzero section of $E$ has
a $0$-dimensional vanishing locus, so that the morphism
$\mathbb{P}(H^0(S,E))\rightarrow S^{[n]}$ is well-defined, obviously
non-constant and thus finite to one onto its image). Let us now
compute the dimension of the subvariety $Z\subset S^{[n}]$ we get by
letting $(C,D)$ deform in the space of pairs consisting of a curve
$C\in|L|$ and an effective divisor $D$ which is a $g_n^1$ on $C$.
The curve $C$ moves in the $g$-dimensional linear system $|L|$ and
$\mathcal{O}_C(D)$ moves in the codimension $2(g-n+1)$ subvariety of
the relative Picard variety ${\rm Pic}(\mathcal{C}/|L|)$ which has
dimension $2g$ (here $\mathcal{C}\rightarrow |L|$ is the universal
curve; we work in fact over the open set parameterizing smooth
curves, and we use Lazarsfeld's theorem \cite{laz} saying that the
dimensions are the expected ones). The choice of $D\in C^{(n)}$
instead of the line bundle $ \mathcal{O}_C(D)$ provides one more
parameter. This gives us a subvariety $W$ of $\mathcal{C}^{(n/B)}$
of dimension $2g+1-(2(g-n+1))=2n-1$. Finally, the fiber over a point
$z\in S^{[n]}$ of the surjective map $W\rightarrow Z$ obtained by
restricting to $W$ the natural morphism
$$\mathcal{C}^{(n/B)}\rightarrow S^{[n]}$$
  identifies to the set of curves
$C\in |L|$ containing the $0$-dimensional subscheme $z\subset S$,
and this is a projective subspace of $|L|\cong\mathbb{P}^g$ which is
of dimension $g-n+1$ since these $z$'s impose exactly $n-1$
conditions to $|L|$. We conclude that ${\rm
dim}\,Z=2n-1-(g-n+1)=3n-g-2$, and ${\rm codim}\,Z=g+2-n$. Hence
$Z\subset S_{g+2-n}X$ and has codimension $g+2-n$.
\begin{rema}{\rm In this example, the base of the coisotropic fibration of $Z$ is
birationally equivalent to a moduli space
of rank $2$ vector bundles on $S$, hence to a
possibly singular hyper-K\"ahler variety. This needs not to be the case in general.
For example, there is a uniruled divisor in the variety of lines $F$
of  a cubic fourfold  which is uniruled over a surface of general type, namely
the indeterminacy locus of the  rational self-map $\phi:F\dashrightarrow F$ constructed in
\cite{voisinfano} (see also Subsection \ref{secsubfanocubic}).
}
\end{rema}
4)  For $n=2$, $S$ admits a covering by a $1$-parameter family of
elliptic curves. Each such curve $E$ carries the
``Beauville-Voisin'' $0$-cycle $o_S$, that is contains a point $x$
that is rationally equivalent to $o_S$ in $S$,  and $2x$ moves in a
pencil in $E$. This way we get a $2$-dimensional orbit $\Sigma$ of
$o_S$ in $S^{[2]}$, which is a Lagrangian surface.

\vspace{0.5cm}

4bis) We can combine construction 4) and the sum map
$$\mu:S^{[2]}\times S^{[n-2]}\dashrightarrow S^{[n]}$$
to construct codimension $2$ subvarieties of $S^{[n]}$ contained in
$S_2S^{[n]}$ : Namely, let $\Sigma $ be as above, then for each
$z\in S^{[n-2]}$, the image $\mu(\Sigma\times z)$ is a surface in
$S^{[n]}$ all of whose points are rationally equivalent in $S^{[n]}$
hence $\mu(\Sigma\times S^{[n-2]})$ is contained in $S_2S^{[n]}$.

Let us now  prove the following result, which is
Conjecture \ref{conjgen} for degree $4$  coisotropic
classes on $S^{[n]}$, $S$ a very general algebraic $K3$ surface. Note that
the cases $n=2,\,3,\,n\geq4$  differ
 from the viewpoint of computing
 cohomology of degree $4$. In the case $n=2$,
 the degree $4$ cohomology (resp. the space of degree
$4$ Hodge classes) is equal to $S^2H^2(S^{[2]},\mathbb{Q})$ (resp.
is generated by $c,\,l,\,e$), while for $n\geq 3$, by the results of
de Cataldo and Migliorini \cite{deca}, the cohomology of degree $4$
of $S^{[n]}$ is generated  by $S^2H^2(S,\mathbb{Q})$ (coming from
the cohomology of $S^{(n)}$), two copies of $H^2(S,\mathbb{Q})$,
(coming via the exceptional divisor from  the cohomology of the
 codimension $2$ stratum, which has for normalization $S^{(n-2)}\times S$)
and by the classes  of the codimension $2$ subvarieties
$E_\mu$ over strata $S^{(\mu)}$ of $S$, with $l(\mu)=n-2$. If $n=3$ there is only one such
stratum and $E_\mu$ in this case is  the set of schemes of length $3$ supported over
the small diagonal. For $n\geq 4$, there are $2$ codimension $4$ strata in $S^{(n)}$
corresponding to the partitions $\{1,\ldots,1,2,2\}$ and $\{1,\ldots,1,3\}$, and
we thus get
two codimension $2$ subvarieties  $E_{\mu_1}$, $E_{\mu_2}$ in $S^{[n]}$, $n\geq 4$.

\begin{prop} \label{propcoisigen} Let $S$ be a very general projective $K3$ surface with Picard
number $1$. Then for any integer $ n$, the space of coisotropic classes of degree
$4$  on $S^{[n]}$ is generated by classes of
codimension $2$ subvarieties contained in $S_2S^{[n]}$.

\end{prop}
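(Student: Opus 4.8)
The plan is to pin down the space $\mathrm{Cois}^4(S^{[n]})$ of coisotropic classes of degree $4$ exactly, and then to realise a spanning set of it by classes of the codimension-$2$ subvarieties of $S_2S^{[n]}$ produced in constructions 1), 2) and 4bis) above. Applying Theorem \ref{theoisoclass}(iii)--(iv) to $X=S^{[n]}$ (so $\rho(X)=2$) with $i=2$, the coisotropic classes that are polynomials in $c,l,e$ form a space of dimension exactly $3$: at least $3$ by (iii), and at most $3$ because by (iv) the square of an ample class is not coisotropic, so this subspace cannot be all of the $4$-dimensional space $\langle l^2,le,e^2,c_{tr}\rangle$ of polynomial Hodge classes. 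When $n=2$ one has $H^4=S^2H^2$, so $\mathrm{Hdg}^4=\langle l^2,le,e^2,c_{tr}\rangle$ and $\mathrm{Cois}^4(S^{[2]})$ is precisely this $3$-dimensional space. When $n\ge 3$, the de Cataldo--Migliorini description of $H^4(S^{[n]})$ \cite{deca} shows that $\mathrm{Hdg}^4$ also contains the ``$\mathfrak q_2$-type'' class supported on the exceptional divisor over $S^{(n-2)}\times S$ and the classes $[E_\mu]$ of the deepest strata ($\ell(\mu)=n-2$); among these extra generators, $[E_\mu]$ is coisotropic by Theorem \ref{propcoiso} and Lemma \ref{leisoclass} (since $E_\mu\subset S_2S^{[n]}$ is algebraically coisotropic by construction 1)), and one checks that the $\mathfrak q_2$-type class is coisotropic as well, its carrier being swept out by the rational fibres of the exceptional divisor, which lie in the kernel of the restricted $2$-form.

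The heart of the matter is the case $n=2$. For $m=1,2,3$ pick a nodal rational curve $C_m\in|mL|$ (such curves exist on a very general polarised $K3$), and let $W_m\subset S^{[2]}$ be the image of the rational map $\widetilde{C_m}^{(2)}=\mathbb{P}^2\dashrightarrow S^{[2]}$. Being dominated by $\mathbb{P}^2$, the surface $W_m$ is a constant cycle surface, hence a Lagrangian surface contained in $S_2S^{[2]}$, so $[W_m]\in\mathrm{Cois}^4(S^{[2]})$. A direct intersection computation yields
\[
\int_{S^{[2]}}[W_m]\cup l^2=m^2(2g-2)^2,\qquad \int_{S^{[2]}}[W_m]\cup le=m(2g-2)\,e_0,\qquad \int_{S^{[2]}}[W_m]\cup e^2=e_0^2,
\]
for a fixed nonzero constant $e_0$ (the restriction of $e$ to $W_m$ is a nonzero multiple of the conic in $\mathbb{P}^2$ parametrising the subschemes tangent to $C_m$). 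The three functionals $\cup l^2,\cup le,\cup e^2$ are linearly independent on $\mathrm{Cois}^4(S^{[2]})$: any class orthogonal to $S^2\mathrm{NS}(S^{[2]})=\langle l^2,le,e^2\rangle$ is a multiple of $c_{tr}$ (the intersection form is nondegenerate on $S^2\mathrm{NS}$), and $c_{tr}$ is not coisotropic by Theorem \ref{theoisoclass}(iv). Since the vectors $(m^2,m,1)$ for $m=1,2,3$ are linearly independent, the classes $[W_1],[W_2],[W_3]$ are linearly independent in the $3$-dimensional space $\mathrm{Cois}^4(S^{[2]})$, hence span it.

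For $n\ge 3$ the argument splits. First, the $3$-dimensional ``polynomial'' part of $\mathrm{Cois}^4(S^{[n]})$ is reached from the $W_m$ through the sum maps: by construction 4bis) the images $\mu(W_m\times S^{[n-2]})\subset S^{[n]}$ are codimension-$2$ subvarieties lying in $S_2S^{[n]}$, and the same pairing computation against $l^2,le,e^2$ on $S^{[n]}$ shows that $[\mu(W_1\times S^{[n-2]})],[\mu(W_2\times S^{[n-2]})],[\mu(W_3\times S^{[n-2]})]$ span that subspace. Second, the remaining coisotropic classes are realised on the exceptional divisor and the deep strata: the $[E_\mu]$ themselves, and the $\mathfrak q_2$-type class, realised by $\widetilde Y_C:=\overline{s^{-1}\bigl(\{2p+z:p\in C,\ z\in S^{(n-2)}\}\bigr)}$ for $C\in|L|$ a nodal rational curve, together with the companion locus $Z'\subset S^{[n]}$ parametrising schemes with one free length-$2$ point and one further point constrained to $C$; both $\widetilde Y_C$ and $Z'$ lie in $S_2S^{[n]}$ because they are swept out by rational surfaces of the form $\mathbb{P}^1\times C$, with the $\mathbb{P}^1$ a fibre of the exceptional divisor (a constant cycle curve) and $C$ a constant cycle curve. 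A concluding linear-algebra check in the de Cataldo--Migliorini basis then shows these classes together span $\mathrm{Cois}^4(S^{[n]})$.

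The main obstacle is precisely this concluding step for $n\ge 3$: writing the classes $[\mu(W_m\times S^{[n-2]})]$, $[\widetilde Y_C]$, $[Z']$ and $[E_\mu]$ in the de Cataldo--Migliorini basis, checking that the coisotropic Hodge classes are exactly those realised by the constructions above (in particular that no coisotropic Hodge class has a nonzero component along the ``supported over a point of $S$'' directions, which do not lie in $S_2S^{[n]}$), and verifying the final span; for $n=2$ everything collapses to the clean Vandermonde computation with the $W_m$. One also uses the standard existence of nodal rational curves in $|mL|$ on a very general polarised $K3$, and checks that auxiliary constants such as $e_0$ are nonzero.
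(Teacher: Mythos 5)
There is a genuine gap, and it is located in the $n=2$ step on which everything else rests: the intersection number $\int_{S^{[2]}}[W_m]\cup e^2$ is not the $m$-independent constant $e_0^2$ you claim. The closure $W_m$ of the image of $\widetilde{C_m}^{(2)}=\mathbb{P}^2\dashrightarrow S^{[2]}$ contains, for each of the $\delta_m=m^2(g-1)+1$ nodes $p$ of $C_m$, the whole fibre $\mathbb{P}(T_pS)$ of the Hilbert--Chow exceptional divisor (the rational map is undefined at the point $\{x_1,x_2\}$ lying over a node, and the limiting secant directions of the two branches sweep out the full $\mathbb{P}^1$). Resolving by blowing up these $\delta_m$ points, one finds $\hat\mu^*e=2H+2\sum_i E_i$ (the fibre degree of $\mathcal{O}(E)$ on $E\to S$ is $-2$), hence $\int[W_m]\cup l^2=m^2(2g-2)^2$ and $\int[W_m]\cup l e=2m(2g-2)$ as you say, but $\int[W_m]\cup e^2=4-4\delta_m=-4m^2(g-1)$. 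This is fatal and not merely a wrong constant: the $e^2$-pairing is proportional to the $l^2$-pairing uniformly in $m$, and since every $W_m$ is a Lagrangian constant cycle surface one also has $\int[W_m]\cup c_{tr}=0$ (restriction of $H^2(X)_{tr}$ to a constant cycle subvariety vanishes, as in the proof of Theorem \ref{theocondcond}). As the intersection form is nondegenerate on ${\rm Hdg}^4(S^{[2]})=\langle l^2,le,e^2,c_{tr}\rangle$, the classes $[W_m]$ for \emph{all} $m$ span only a $2$-dimensional space (with the numbers above, $3[W_1]-3[W_2]+[W_3]=0$), so no choice of these surfaces can generate the $3$-dimensional space of coisotropic classes. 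Your Vandermonde argument only appears to work because of the erroneous $e^2$ value. The paper avoids this trap by taking three surfaces of genuinely different geometric origin: $C^{(2)}$ (your $W_1$), the preimage under Hilbert--Chow of a constant cycle curve inside the diagonal (a surface contained in the exceptional divisor, of class proportional to $l\cdot e$), and the Lagrangian orbit surface of $o_S$ coming from the elliptic-pencil construction; independence is then checked by qualitative means ($s_*$ kills the second but not the first, and the incidence correspondence with $S$ kills the first two but not the third), not by intersection numbers.

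Two secondary points. First, your justification of the bound $\dim\le 3$ is garbled: Theorem \ref{theoisoclass}(iv) does not say that $l^2$ fails to be coisotropic (indeed any coisotropic polynomial class must have nonzero $l^2$-coefficient); what it gives is that $c_{tr}$ is not coisotropic, which does yield the bound, and the paper obtains the equality directly from the proof of (iii). Also, the assertion that the orthogonal complement of $\langle l^2,le,e^2\rangle$ in ${\rm Hdg}^4$ is spanned by $c_{tr}$ is false: by the Fujiki relations $\int_X c_{tr}\cup\lambda\cup\mu$ is a nonzero multiple of $q(\lambda,\mu)$, so $c_{tr}$ is not orthogonal to $S^2{\rm NS}$ (this claim happened to be inessential to your argument). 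Second, for $n\ge 3$ you defer the decisive span statement to an unspecified ``concluding linear-algebra check''; the paper instead works modulo the classes of the deep strata $E_\mu$ and of the two codimension-$2$ subvarieties supported on the exceptional divisor (images of $pr_1^*c_1(L)$ and $pr_2^*c_1(L_{n-2})$ under the natural correspondence), all of which lie in $S_2S^{[n]}$, thereby reducing to polynomial classes and then to the $n=2$ case via the sum construction; since your $n\ge 3$ reduction feeds back into the (broken) $n=2$ step, it does not repair the argument.
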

\begin{proof}  First of all, it is immediate to see looking at the
 proof of Theorem \ref{theoisoclass}, (iii) that for any very general $X$ hyper-K\"ahler manifold
  with $\rho(X)=2$,
there is exactly a $3$-dimensional space of isotropic classes which can be written
 as polynomials of weighted degree $2$ in $c,\,l,\,e$  (that is, the inequality
given in Theorem \ref{theoisoclass}, (iii) is an equality).

Now we first do the case $n=2$. In this case,  the algebraic
cohomology of $S^{[2]}$ for $S$ very general is given by polynomials
of weighted degree $2$ in $c,\,l,\,e$  so it suffices to exhibit
three surfaces contained in $S_2S^{[2]}$ (that is, constant cycles
surfaces)  with independent classes. Construction 1bis) gives us
such a surface, starting from a constant cycle curve $C\subset
S=\Delta_S\subset S^{(2)}$, and taking $\Sigma_1=s^{-1}(S)\subset
E\subset S^{[2]}$. The surface $\Sigma_1$ is of class $l\cdot e$.
Construction 2 gives us the surface $\Sigma_2=C^{(2)}\subset
S^{[2]}$ and clearly the class of $\Sigma_2$ is not proportional to
the class of $\Sigma_1$ because the latter is annihilated by
$s_*:H_4(S^{[2]},\mathbb{Q})\rightarrow H_4(S^{(2)},\mathbb{Q})$
while the former is not. Finally construction 4 gives us a constant
cycle surface $\Sigma_3\subset S^{[2]}$. The class of $\Sigma_3$ is
not contained in the space generated by the classes of $\Sigma_1$
and $\Sigma_2$ because the latter are annihilated by the morphism
$$p_{1*}\circ p_2^*: H^4(S^{[2]},\mathbb{Q})\rightarrow H^0(S,\mathbb{Q}),$$
where
$\Sigma\subset S\times S^{[2]}$ is the incidence subvariety and $p_1,\,p_2$ are the restrictions
to $\Sigma$ of the two projections,
while the former is not.

The general case follows
 by  analyzing  the coisotropic  classes on $S^{[n]}$ which are not polynomials in $c,\,l,\,e$.
Indeed, for $n\geq 4$ (the case $n=3$ is slightly different but can
be analyzed similarly), we observed that there are two extra degree
$4$ algebraic classes which are the classes of the varieties
$s^{-1}(S^{(\mu_{1,\ldots,1,3})})$ and
$s^{-1}(S^{(\mu_{1,\ldots,1,2,2})})$. These two codimension $2$
subvarieties are coisotropic subvarieties fibered into constant
cycle surfaces in $S^{[n]}$, hence contained in $S_2S^{[n]}$ (see construction 1)), so we
can work modulo their classes. Next, modulo these two classes, the
algebraic cohomology of $S^{[n]}$ supported on the exceptional
divisor $E$ is generated as follows: The normalization
$\widetilde{E}$ of $E$ admits a morphism $f$ to $S\times S^{(n-2)}$
and a morphism $j$ to $S^{[n]}$. Then we have the two classes
$$j_*(f^*(pr_1^*c_1(L))),\,\,j_*(f^*(pr_2^*c_1(L_{n-2})))$$
which are both classes of subvarieties of codimension $2$ of $S^{[n]}$ contained in
$S_2S^{[n]}$ because $f$ has generic fibers isomorphic to
 $\mathbb{P}^1$ and choosing a constant cycle curve $C\subset S$
which is a member of $ |L|$, $pr_1^*c_1(L)=C\times S^{(n-2)}$ is
contained in $S_1(C\times S^{(n-2)})$ and similarly for
$pr_2^*c_1(L_{n-2})$. We are thus reduced to study degree $2$
algebraic isotropic  classes on $S^{[n]}$ modulo those which are
supported on the exceptional divisor; it is immediate to see that
they are generated by polynomials in $c,\,l$ and $e$,  and we then
prove they are generated by classes of subvarieties of codimension
$2$ contained in $S^{[2]}$ starting from the case $n=2$ and applying
the sum construction.

\end{proof}
We now turn to the Chow-theoretic conjectures made in Section
\ref{sec3}. In this case, there exists  a natural splitting of the
Bloch-Beilinson filtration which is given by the de
Cataldo-Migliorini decomposition \cite{deca} and the decompositions
of the motives of the ordinary self-products or symmetric products
of $S$ given by the choice of the class $o_S\in{\rm CH}_0(S)$ as in
Section \ref{sec1}. This decomposition  is multiplicative, as proved
by Vial \cite{vial}, hence is the obvious candidate for the
Beauville decomposition in this case.

We observe first that by definition, the induced decomposition  on
${\rm CH}_0$ is the one already described in Section \ref{sec1} (see
Proposition \ref{prodecompchow}), with $o=o_S$, for which Conjecture
\ref{conjfiltopp} has been proved to hold.

Concerning the other conjectures, we deduce from the definition of the de Cataldo-Migliorini
 decomposition and from the
construction 1),   above a reduction of Conjectures  \ref{conjclasscoiso} and
\ref{conjpresquempty} to the case of
ordinary self-products or symmetric products of $S$.

Finally, we also have the following evidence for Conjecture
\ref{conjclasscoiso}, which immediately follows from the definition
of the de Cataldo-Migliorini decomposition:
\begin{lemm} For any partition $\mu$ of $n$, the codimension $i$
subvarieties $$s^{-1}(S^{(\mu)})\subset S_iS^{[n]}$$ appearing in
construction 1), with $i=n-l(\mu)$,  have their class in ${\rm
CH}^i(S^{[n]})_0$.
\end{lemm}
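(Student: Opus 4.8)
The plan is to recognise the class $[E_\mu]$ --- where $E_\mu:=\overline{s^{-1}(S^{(\mu)}_0)}\subset S^{[n]}$ is the codimension $i=n-l(\mu)$ subvariety of Construction~1) --- as the image of the fundamental class $[S^{(\mu)}]$ under the structural inclusion of the $\mu$-summand in the de Cataldo--Migliorini decomposition of $\mathfrak h(S^{[n]})$, and then to use that the splitting of ${\rm CH}^*(S^{[n]})$ under consideration is, by definition (see the discussion preceding the lemma, together with \cite{deca} and \cite{vial}), precisely the one induced by that motivic decomposition and the $o_S$-decompositions of the motives of the symmetric products. As in \cite{deca} I identify the stratum closure $\overline{S^{(\mu)}_0}\subset S^{(n)}$ with the symmetric product $S^{(\mu)}=\prod_k{\rm Sym}^{a_k}(S)$ ($a_k$ = number of parts of $\mu$ equal to $k$), and I treat these, being quotients, as in \emph{loc. cit.} despite their singularities.

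First I would recall the shape of the dCM decomposition. Since $s\colon S^{[n]}\to S^{(n)}$ is semismall with rationally connected (hence ``trivially oriented'') fibres, one has a canonical isomorphism of Chow motives
\[
\mathfrak h(S^{[n]})\ \cong\ \bigoplus_{\nu\vdash n}\mathfrak h\bigl(S^{(\nu)}\bigr)\bigl(-(n-l(\nu))\bigr),
\]
in which the inclusion $\iota_\mu\colon\mathfrak h(S^{(\mu)})(-i)\hookrightarrow\mathfrak h(S^{[n]})$ of the $\mu$-summand ($i=n-l(\mu)$) is represented by the class of the incidence variety
\[
Z_\mu:=\overline{\{(z,\xi)\in S^{(\mu)}\times S^{[n]}\ :\ z\in S^{(\mu)}_0,\ s(\xi)=z\}},
\]
which is nothing but the graph of the surjective morphism $s|_{E_\mu}\colon E_\mu\to S^{(\mu)}$; a dimension count ($\dim Z_\mu=\dim E_\mu=2l(\mu)+i=n+l(\mu)$) confirms that $[Z_\mu]$ lies in ${\rm CH}^{n+l(\mu)}(S^{(\mu)}\times S^{[n]})={\rm Hom}(\mathfrak h(S^{(\mu)})(-i),\mathfrak h(S^{[n]}))$.

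Next I would compute the action of $\iota_\mu$ on the fundamental class. Since $Z_\mu$ is the graph of $s|_{E_\mu}$, the second projection carries it birationally onto $E_\mu$, so that
\[
\iota_{\mu*}\bigl([S^{(\mu)}]\bigr)={\rm pr}_{2*}\bigl([Z_\mu]\cdot{\rm pr}_1^{*}[S^{(\mu)}]\bigr)={\rm pr}_{2*}[Z_\mu]=[E_\mu]\quad\text{in }{\rm CH}^{i}(S^{[n]}).
\]
Now $[S^{(\mu)}]\in{\rm CH}^0(S^{(\mu)})$ lies in the $0$-th piece ${\rm CH}^0(S^{(\mu)})_0$ of the $o_S$-Beauville decomposition of ${\rm CH}^*(S^{(\mu)})$ (the degree-$0$ projector acts as the identity on ${\rm CH}^0$). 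Since a Tate twist by $(-i)$ shifts the codimension index by $i$ while preserving the Beauville index $j$, the inclusion $\iota_\mu$ sends ${\rm CH}^m(S^{(\mu)})_j$ into ${\rm CH}^{m+i}(S^{[n]})_j$; taking $m=j=0$ gives $[E_\mu]=\iota_{\mu*}([S^{(\mu)}])\in{\rm CH}^{i}(S^{[n]})_0$, which is the assertion.

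The one point that is not purely formal is the claim, made in the second paragraph, that $\iota_\mu$ may be represented by the incidence class $[Z_\mu]$ \emph{up to error terms that are irrelevant here}. In the semismall decomposition theorem $\iota_\mu$ is obtained from the naive approximation $[Z_\mu]$ by a triangular correction involving correspondences supported over more degenerate strata $\overline{S^{(\nu)}_0}\subsetneq\overline{S^{(\mu)}_0}$; such a correction, applied to $[S^{(\mu)}]$, pushes forward to a cycle supported on $s^{-1}(\overline{S^{(\nu)}_0})$, which has dimension $<\dim E_\mu$, hence vanishes in ${\rm CH}^{i}(S^{[n]})$. Thus $\iota_{\mu*}([S^{(\mu)}])=[E_\mu]$ on the nose, and the argument goes through; this is exactly the sense in which the statement ``immediately follows from the definition of the de Cataldo--Migliorini decomposition''.
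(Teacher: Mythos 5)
Your proposal is correct and is essentially the paper's argument: the paper simply asserts that the lemma ``immediately follows from the definition of the de Cataldo--Migliorini decomposition'', and your proof is exactly a careful unwinding of that statement, identifying $[E_\mu]$ as the image under the $\mu$-summand embedding of the fundamental class $[S^{(\mu)}]\in {\rm CH}^0(S^{(\mu)})_0$ and noting that the induced grading is transported through the Tate-twisted summands. Your extra observation that any triangular correction terms supported over smaller strata die on the fundamental class for dimension reasons is a sound way to justify the identification $\iota_{\mu*}([S^{(\mu)}])=[E_\mu]$.
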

\subsection{The Fano variety of lines in a cubic fourfold \label{secsubfanocubic}}
It is well-known since \cite{bedo} that the
variety $F$ of lines in a smooth cubic fourfold
$W\subset \mathbb{P}^5$ is a smooth hyper-K\"ahler fourfold which is a
 deformation
of $S^{[2]}$ for a $K3$ surface
$S$  of
genus $14$ (and an adequate polarization on $S^{[2]}$).
The Chow ring of such varieties $F$ has been studied in \cite{voisinPAMQ}, confirming in particular
Conjecture \ref{conjBV} for them, and even its variant involving the Chern classes of $F$.
This variety satisfies
Conjecture \ref{conjgen}. This is implied by the following result summarizing observations
made  in \cite{voisinfano},
\cite{voisinPAMQ} (or can be obtained as an application of \cite{chapa}).
\begin{prop} \label{proramavoi}Let $W$ be a general smooth cubic fourfold, then

(a) There are rational surfaces in $F$ which can be obtained by considering the surface of lines in a
hyperplane section of $X$ with $5$ nodes.

(b) There is a  uniruled divisor in $X$  obtained as follows: $F$ admits a rational self-map
$\phi_F:F\dashrightarrow F$ which is of degree $16$.
The exceptional divisor of a desingularization $\tilde{\phi}$
of $\phi$ maps to a uniruled divisor in $F$.

\end{prop}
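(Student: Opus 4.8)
My plan is to deduce both parts from the fine study of lines on a cubic fourfold carried out in \cite{voisinfano} and \cite{voisinPAMQ}, recalling only the constructions that are needed.

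For (a): for $W\subset\mathbb{P}^5$ a general cubic fourfold, a dimension count in the $5$-dimensional linear system of hyperplanes $H\subset\mathbb{P}^5$ (the locus of cubic threefolds with $5$ ordinary double points in general position has codimension $5$ among cubic threefolds), completed by the non-emptiness verification carried out in \cite{voisinfano},\cite{voisinPAMQ}, produces hyperplanes $H$ for which $Y:=W\cap H$ is a cubic threefold with exactly $5$ ordinary double points in general position and no other singularity. For such an $H$, a line $l\subset W$ lies on $Y$ if and only if $l\subset H$, so the Fano surface of lines $F(Y)$ is the closed subscheme $\{[l]\in F\,:\,l\subset H\}$ of $F$, of expected dimension $2$ (it is $F\cap G(2,5)$ inside $G(2,6)$, a section of codimension $2$). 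It then remains to quote the fact, proved in \cite{voisinfano}, that the Fano surface of lines of a general $5$-nodal cubic threefold is rational (projecting from the nodes makes $Y$, and with it its surface of lines, rational). Hence $F$ contains rational surfaces, as asserted; and since all points of a rational surface are already rationally equivalent inside it, such a surface is a constant cycle subvariety of $F$, hence Lagrangian by Corollary \ref{coroiso}, which is how (a) feeds into Conjecture \ref{conjgen}.

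For (b): I would invoke from \cite{voisinfano} the existence, for $W$ general, of the dominant rational self-map $\phi_F:F\dashrightarrow F$ of degree $16$ (sending a general line $l$ of the first type to the residual line cut out by the unique plane through $l$ tangent to $W$ along $l$). Let $\nu:\widetilde F\to F$ be a resolution of its indeterminacy, with exceptional divisor $E\subset\widetilde F$ and $\widetilde\phi:\widetilde F\to F$ the induced morphism. Since $\phi_F$ is genuinely not a morphism --- its indeterminacy locus is the surface $\Sigma_2\subset F$ of lines of the second type, which is a surface of general type --- the divisor $E$ is nonempty and, after further blow-ups if needed, dominates $\Sigma_2$ with positive-dimensional projective fibres; in particular $E$ is uniruled. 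The point still to establish is that $D:=\widetilde\phi(E)\subset F$ is a \emph{divisor} and is uniruled, and this is exactly the content of the analysis in \cite{voisinfano}: $\widetilde\phi$ does not collapse $E$ to something of dimension $\leq 2$, and $D$ is the uniruled divisor ruled over a birational model of $\Sigma_2$. Granting this, $D$ is uniruled since it is dominated, generically finitely, by the uniruled variety $E$, the ruling curves of $E$ mapping to rational curves; being uniruled, $D\subset S_1F$, which is the input relevant to Conjecture \ref{conjgen}.

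I expect the one genuinely non-formal step, and hence the main obstacle, to be this last point in (b): controlling the behaviour of $\phi_F$ near its indeterminacy locus, i.e.\ proving that the resolved map sends $E$ \emph{onto} a divisor of $F$ and identifying it, which requires the detailed infinitesimal geometry of first- versus second-type lines on $W$. Part (a) is comparatively soft, its only non-elementary ingredient being the rationality of the surface of lines of a general $5$-nodal cubic threefold.
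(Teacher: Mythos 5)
The paper gives no proof of this Proposition at all: it is stated explicitly as a summary of observations from \cite{voisinfano} and \cite{voisinPAMQ}, so your strategy of reducing both parts to those references is the intended route, and your outline of (b) (indeterminacy locus $\Sigma_2$ of second-type lines, resolution, image of the exceptional divisor) matches what the paper itself uses later, e.g.\ in Proposition \ref{procascubic}. One justification you offer in (a) is, however, genuinely wrong: rationality of the $5$-nodal hyperplane section $Y=W\cap H$ (by projection from a node) does \emph{not} imply rationality of its surface of lines. Already a $1$-nodal cubic threefold is rational, yet its Fano surface is birational to ${\rm Sym}^2C$ for the genus~$4$ curve $C$ of lines through the node, hence irrational. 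What makes five nodes special is that the curve $C$ of lines through one node, a $(2,3)$ complete intersection in $\mathbb{P}^3$, passes through the four points corresponding to the lines joining that node to the other four and is singular there, so it becomes rational; since the residuation map ${\rm Sym}^2C\dashrightarrow F(Y)$, sending $(\ell_1,\ell_2)$ to the residual line of $\langle \ell_1,\ell_2\rangle\cap Y$, is birational, $F(Y)$ is rational. That fact, together with the existence for general $W$ of such $5$-nodal hyperplane sections (which needs more than the codimension count), is the content of \cite{voisinPAMQ}; \cite{voisinfano} is the reference for the degree~$16$ self-map in (b), so your attribution should be adjusted accordingly.

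Concerning the step you flag as the main obstacle in (b) --- that $\tilde\phi$ sends the exceptional divisor onto a \emph{divisor} which is uniruled --- you need not defer it wholesale to \cite{voisinfano}: it follows from the same argument the paper uses for the LLSS eightfold in its final Remark. Writing $K_{\widetilde F}=\tilde\phi^*K_F+R$ and $K_{\widetilde F}=\nu^*K_F+E'$ with $K_F=0$, the ramification divisor $R$ coincides with the exceptional divisor of $\nu:\widetilde F\to F$, whose components are uniruled since $F$ is smooth. Because $F$ is simply connected and $\widetilde F$ is irreducible, the branch locus of the degree~$16$ generically finite map $\tilde\phi$ must be a non-empty divisor (otherwise one would obtain a connected degree~$16$ cover of $F$, \'etale away from a subset of codimension $\geq 2$, contradicting $\pi_1(F)=1$ by purity of the branch locus), and each of its components is the generically finite image of an exceptional, hence uniruled, component of $R$; the ruling curves cannot be contracted when the image is three-dimensional, so the branch divisor is uniruled. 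This closes the gap you identified and also shows the exceptional divisor is not entirely contracted.
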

Note that for such an $F$, when $W$ is very general, the degree $4$ cohomology  $H^4(F,\mathbb{Q})$
is equal to $S^2H^2(F,\mathbb{Q})$ and Hodge classes of degree $4$ are linear
combinations of $l^2,\,c_2(F)$, where $l$ is the Pl\"ucker polarization.
This case is quite interesting because Shen and Vial constructed
in \cite{shenvial}  a ``Beauville decomposition''
of ${\rm CH}(F)$.  We now have the following proposition showing that
our proposal to construct a decomposition on ${\rm CH}_0(X)$, for $X$ a general
 hyper-K\"ahler manifold fits well with their results:
\begin{prop} \label{procascubic} For the Shen-Vial decomposition,  one has
$$S_1{\rm CH}_0(F)={\rm CH}_0(F)_0\oplus {\rm CH}_0(F)_2,$$
$$S_2{\rm CH}_0(F)={\rm CH}_0(F)_0.$$
In particular, $F$ satisfies Conjecture \ref{conjSiop}.
 \end{prop}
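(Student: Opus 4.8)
The plan is to establish the two displayed identities separately -- the case $i=2$ (the Lagrangian one) via Lemma \ref{lechapa}(ii), the case $i=1$ via Lemma \ref{lesurjtoopp} combined with the structure of the Shen--Vial decomposition -- and then to observe that together they are exactly Conjecture \ref{conjSiop} for $F$. One may assume $W$ very general, so $\rho(F)=1$, ${\rm NS}(F)_{\mathbb{Q}}=\mathbb{Q}l$ with $l$ the Pl\"ucker polarization; since $H^4(F,\mathbb{Q})=S^2H^2(F,\mathbb{Q})$ and (Remark \ref{rema28dec}) $c$ is algebraic, every degree $4$ Hodge class on $F$ has the form $\lambda_0l^2+\lambda_1c_{tr}$. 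Recall from \cite{shenvial} that ${\rm CH}_0(F)={\rm CH}_0(F)_0\oplus{\rm CH}_0(F)_2\oplus{\rm CH}_0(F)_4$ with ${\rm CH}_0(F)_0=\mathbb{Q}o_F$, that ${F'}^{1}_{BB}{\rm CH}_0(F)={\rm CH}_0(F)_{hom}={\rm CH}_0(F)_2\oplus{\rm CH}_0(F)_4$ and ${F'}^{2}_{BB}{\rm CH}_0(F)={\rm CH}_0(F)_4$, that ${\rm CH}_1(F)={\rm CH}_1(F)_0\oplus{\rm CH}_1(F)_2$, and that (by the multiplicativity results of \cite{shenvial}) intersection with $l$ is bigraded: $l\cdot{\rm CH}_1(F)_{2j}\subset{\rm CH}_0(F)_{2j}$. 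A preliminary point: $F$ satisfies Conjecture \ref{conjmain}. Indeed the bound $\dim S_iF\le 2n-i$ is proved unconditionally inside the proof of Theorem \ref{propcoiso} (any subvariety of $S_iF$ has codimension $\ge i$), while Proposition \ref{proramavoi}(b) gives a uniruled divisor $D\subset S_1F$, so $\dim S_1F\ge 3$, and Proposition \ref{proramavoi}(a) gives a connected rational surface $P\subset F$, which (having a smooth model with trivial ${\rm CH}_0$) is a constant cycle surface, so $P\subset S_2F$ and $\dim S_2F\ge 2$; hence $\dim S_1F=3$, $\dim S_2F=2$.

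Case $i=2$. Being a constant cycle subvariety of dimension $n=2$, $P$ is Lagrangian by Corollary \ref{coroiso}, hence coisotropic, so $[P]$ is a nonzero coisotropic Hodge class $\lambda_0l^2+\lambda_1c_{tr}$ with $\lambda_0\ne 0$ by Theorem \ref{theoisoclass}(iv). As $P$ is connected, Lemma \ref{lechapa}(ii) gives $S_2{\rm CH}_0(F)=\mathbb{Q}[p]$ for a point $p\in P$, together with $S_2{\rm CH}_0(F)\cap{\rm CH}_0(F)_{hom}=0$. To see that $[p]=o_F$: the class of the algebraically coisotropic surface $P$ lies in ${\rm CH}^2(F)_0$ (the relevant instance of Conjecture \ref{conjclasscoiso} for $F$, known by \cite{shenvial}, \cite{voisinPAMQ}, since $l^2$ and $c$ lie in ${\rm CH}^2(F)_0$ and $[P]$ is determined by its cohomology class), hence $l^2\cdot[P]\in{\rm CH}^4(F)_0=\mathbb{Q}o_F$; but $l^2\cdot[P]=(\int_Pl^2)\,[p]$ because every point of $P$ has class $[p]$. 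Therefore $S_2{\rm CH}_0(F)=\mathbb{Q}o_F={\rm CH}_0(F)_0$.

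Case $i=1$. I first claim $S_1{\rm CH}_0(F)=l\cdot{\rm CH}_1(F)$. For $\subset$: any $x\in S_1F$ lies on a constant cycle curve $C_x\subset F$, so $[x]=(\deg l|_{C_x})^{-1}\,l\cdot[C_x]\in l\cdot{\rm CH}_1(F)$, and $S_1{\rm CH}_0(F)$ is generated by such classes. For $\supset$: $[D]=ml$ with $m>0$, so for $\gamma\in{\rm CH}_1(F)$ the cycle $l\cdot\gamma=\tfrac1m\,D\cdot\gamma$ is supported on $D\subset S_1F$. Using the bigradedness of $l\cdot(-)$ and ${\rm CH}_1(F)={\rm CH}_1(F)_0\oplus{\rm CH}_1(F)_2$ one gets $S_1{\rm CH}_0(F)=l\cdot{\rm CH}_1(F)\subset{\rm CH}_0(F)_0\oplus{\rm CH}_0(F)_2$. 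On the other hand, since $F$ satisfies Conjecture \ref{conjmain}, Lemma \ref{lesurjtoopp} says the composite $S_1{\rm CH}_0(F)\hookrightarrow{\rm CH}_0(F)\twoheadrightarrow{\rm CH}_0(F)/{F'}^{2}_{BB}{\rm CH}_0(F)={\rm CH}_0(F)_0\oplus{\rm CH}_0(F)_2$ is surjective (the Shen--Vial decomposition supplying the filtration ${F'}_{BB}$ and its functoriality). A subspace of ${\rm CH}_0(F)_0\oplus{\rm CH}_0(F)_2$ that surjects onto it must equal it, so $S_1{\rm CH}_0(F)={\rm CH}_0(F)_0\oplus{\rm CH}_0(F)_2$. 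The two identities are then precisely (\ref{formulatrans}) for $X=F$, $n=2$, $i=1,2$ (and trivially $i=0$), i.e. $F$ satisfies Conjecture \ref{conjSiop}.

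The step I expect to demand the most care is the compatibility between the geometric filtration $S_\cdot$ and the Shen--Vial grading: both the $i=1$ upper bound (which uses that $l\cdot(-)$ is bigraded and that ${\rm CH}_1(F)$ has no ``grade $4$'' component) and the $i=2$ identification (which uses that $l$, $c$ and the class of the rational surface $P$ all lie in the $0$-th piece of the decomposition) rest on multiplicativity statements for that decomposition, which is where all the real input from \cite{shenvial} is used; everything else is bookkeeping with the geometric constructions of Proposition \ref{proramavoi} and the two general Lemmas \ref{lesurjtoopp} and \ref{lechapa}.
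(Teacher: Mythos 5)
Your $i=2$ half is essentially the paper's own argument (Lemma \ref{lechapa}(ii) plus the identification of the generator of $S_2{\rm CH}_0(F)$ with $o_F$ via the rational surfaces of Proposition \ref{proramavoi}(a)), although your parenthetical justification that $[P]\in{\rm CH}^2(F)_0$ ``since $[P]$ is determined by its cohomology class'' begs the question: a degree $4$ Chow class is not determined by its cohomology class, and the actual input is the Chow-theoretic statement of \cite{voisinPAMQ} that these rational surfaces represent $c_2(\mathcal{E})$ in ${\rm CH}^2(F)$ (this is what the paper invokes); since you cite \cite{voisinPAMQ} the needed fact is available, but the reason you give for it is not a proof.

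The genuine gap is in the $i=1$ case, at the surjectivity step. You deduce that $S_1{\rm CH}_0(F)$ surjects onto ${\rm CH}_0(F)_0\oplus{\rm CH}_0(F)_2$ from Lemma \ref{lesurjtoopp}, ``the Shen--Vial decomposition supplying the filtration ${F'}_{BB}$ and its functoriality''. But Lemma \ref{lesurjtoopp} is a conditional statement: its proof uses the Bloch--Beilinson axioms in the strong form that $Gr^j_{F}{\rm CH}_0$ is governed by the holomorphic forms of the corresponding degree, so that injectivity of $H^0(F,\Omega^2_F)\rightarrow H^0(\widetilde{D},\Omega^2_{\widetilde D})$ forces surjectivity of ${\rm CH}_0(D)\rightarrow {\rm CH}_0(F)/{F'}^{2}$. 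This Mumford-type converse is exactly the deeply conjectural part of the axioms and is not a known property of the Shen--Vial decomposition; if it were formal, no geometric input would be needed at all. The unconditional substitute, and the key input of the paper's proof, is Shen--Vial's Proposition 19.5, which states ${\rm CH}_0(F)_0\oplus{\rm CH}_0(F)_2={\rm Im}\,({\rm CH}_0(\Sigma_2)\rightarrow{\rm CH}_0(F))$ for the surface $\Sigma_2$ of indeterminacy of $\phi$; the paper then uses that the decomposition is preserved by $\phi_*$ to replace $\Sigma_2$ by the uniruled divisor $D$ (image of the exceptional divisor over $\Sigma_2$), giving ${\rm CH}_0(F)_0\oplus{\rm CH}_0(F)_2\subset S_1{\rm CH}_0(F)$, and the ampleness of $D$ (here $\rho(F)=1$) to show that every $1$-dimensional orbit meets $D$, giving the reverse inclusion. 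Your inclusion $S_1{\rm CH}_0(F)=l\cdot{\rm CH}_1(F)\subset{\rm CH}_0(F)_0\oplus{\rm CH}_0(F)_2$ via bigradedness of $l\cdot(-)$ and the absence of a piece ${\rm CH}_1(F)_4$ is a reasonable alternative for that direction (modulo checking the precise multiplicativity statements in \cite{shenvial}), but without Proposition 19.5 (or an equivalent unconditional statement) the opposite inclusion is unproved, and with it your detour through Lemma \ref{lesurjtoopp} becomes unnecessary.
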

 \begin{proof}
Indeed, \cite[Proposition  19.5]{shenvial}  says the following:
 Let $\Sigma_2\subset F$ be the surface of lines $L\subset W$ such that
 there exists a $\mathbb{P}^3$ everywhere tangent to $W$ along $L$. The surface $\Sigma_2$
 is clearly the indeterminacy locus of the rational map
 $\phi: F\dashrightarrow F$ introduced above.
\begin{prop}  (Shen-Vial \cite{shenvial}) One has
\begin{eqnarray}
\label{truc5jan} {\rm CH}_0(F)_0\oplus {\rm CH}_0(F)_2={\rm
Im}\,({\rm CH}_0(\Sigma_2)\rightarrow {\rm CH}_0(F)).
\end{eqnarray}
\end{prop}
On the other hand, the Shen-Vial  decomposition is preserved by the
map $\phi_*$ and thus, if $D$ is the uniruled divisor mentioned in
Proposition \ref{proramavoi}, (b), that is, the image under the
desingularized map $\tilde{\phi}:\widetilde{F}\rightarrow F$ of the
exceptional divisor over $\Sigma_2$, one has
\begin{eqnarray}
\label{tructruc5jan}{\rm Im}\,({\rm CH}_0(\Sigma_2)\rightarrow {\rm
CH}_0(F))={\rm Im}\,({\rm CH}_0(D)\rightarrow {\rm CH}_0(F))\subset S_1{\rm CH}_0(F).
\end{eqnarray}
Finally, if $x\in F$ belongs to $S_1F$, the orbit $O_x$ contains a
curve, which has to intersect $D$, as $D$ is ample. Hence $x$ is
rationally equivalent in $F$ to a point of $D$. Thus we conclude
that
$S_1{\rm CH}_0(F)\subset {\rm Im}\,({\rm CH}_0(D)\rightarrow {\rm CH}_0(F))$
so finally
\begin{eqnarray}
\label{tructruc13jan}{\rm Im}\,({\rm CH}_0(D)\rightarrow {\rm CH}_0(F))\subset S_1{\rm CH}_0(F).
\end{eqnarray}
Combining (\ref{tructruc13jan}), (\ref{truc5jan}) and (\ref{tructruc5jan}), we get
$$S_1{\rm CH}_0(F)={\rm CH}_0(F)_0\oplus {\rm CH}_0(F)_2$$
as desired.

 The
second statement follows from the fact that the group ${\rm
CH}_0(F)_0$ of the Shen-Vial decomposition is generated by the
canonical $0$-cycle of $F$, which can be constructed (using the
results of \cite{voisinPAMQ}) by taking any $0$-cycle $o_F$ of
degree different from $0$, which can be expressed  as  a weighted
degree $4$ polynomial in $c_2(\mathcal{E})$ and $l$, where
$\mathcal{E}$ is the restriction to $F\subset G(1,5)$ of the
universal rank $2$ bundle on the Grassmannian  $G(1,5)$. However,
the rational surfaces described in Proposition \ref{proramavoi}, a)
represent the class $c_2(\mathcal{E})$ in ${\rm CH}^2(F)$. Thus
$o_F$, being supported on  a rational surface, belongs to $S_2{\rm
CH}_0(F)$. This gives the inclusion ${\rm CH}_0(F)_2\subset S_2{\rm
CH}_0(F)$ and the fact that this is an equality follows from Lemma
\ref{lechapa}, (ii) which implies that the right hand side is
isomorphic to $\mathbb{Q}$.
\end{proof}
Let us finish this section with the following result:
\begin{prop} \label{propconjvar} Conjecture \ref{conjvar} is satisfied by the variety of lines
$F$  of a cubic fourfold $W$, that is, the cycle class map is injective on
the subgroup of ${\rm CH}^2(F)$ generated by constant cycles surfaces.
\end{prop}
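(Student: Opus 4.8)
The plan is to prove the equivalent statement that every homologically trivial $\mathbb{Q}$-linear combination $D=\sum_k\lambda_k[\Sigma_k]$ of classes of constant cycles surfaces $\Sigma_k\subset F$ vanishes in ${\rm CH}^2(F)$. Recall that $F$ is a hyper-K\"ahler fourfold, so a constant cycles surface is exactly a codimension-$2$ subvariety contained in $S_2F$, which is Lagrangian by Corollary \ref{coroiso}; thus $C_n(F)\subset {\rm CH}^2(F)$ is the span of such classes and what is asked is the injectivity of the cycle class map on it. I would deduce this from two inputs: a description of the $0$-cycle carried by a constant cycles surface, furnished by Proposition \ref{procascubic}, and the hard-Lefschetz type properties of the Shen--Vial Fourier decomposition of ${\rm CH}(F)$ established in \cite{shenvial}.

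First I would analyse the $0$-cycle carried by a constant cycles surface $\Sigma\subset F$. Let $o_\Sigma\in {\rm CH}_0(F)$ be the class of an arbitrary point of $\Sigma$; it has degree $1$, and since every point of $\Sigma$ has orbit of dimension $\geq\dim\Sigma=2$ we have $\Sigma\subset S_2F$ and $o_\Sigma\in S_2{\rm CH}_0(F)$. By Proposition \ref{procascubic}, $S_2{\rm CH}_0(F)={\rm CH}_0(F)_0=\mathbb{Q}\,o_F$, so $o_\Sigma=o_F$ is the canonical $0$-cycle, independently of $\Sigma$. Consequently, for any class $\gamma\in {\rm CH}^2(F)={\rm CH}_2(F)$, pulling $\gamma$ back to a desingularization $\widetilde{\Sigma}\stackrel{\tilde{i}}{\to}F$ of $\Sigma$ and pushing forward (projection formula, $[\Sigma]\cdot\gamma=\tilde{i}_*(\tilde{i}^*\gamma)$) exhibits $[\Sigma]\cdot\gamma\in {\rm CH}_0(F)$ as a $0$-cycle supported on $\Sigma$, hence equal to $\deg([\Sigma]\cdot\gamma)\,o_F$; and $\deg([\Sigma]\cdot\gamma)$ depends only on the cohomology classes $[\Sigma],[\gamma]\in H^4(F,\mathbb{Q})$.

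Now, given $D=\sum_k\lambda_k[\Sigma_k]$ as above with $[D]=0$ in $H^4(F,\mathbb{Q})$, I would intersect with $l^2$, $l$ an ample class on $F$: by the previous step $D\cdot l^2=\bigl(\sum_k\lambda_k\deg([\Sigma_k]\cdot l^2)\bigr)o_F=\deg([D]\cup[l^2])\,o_F=0$ in ${\rm CH}_0(F)$. On the other hand $D\in {\rm CH}^2(F)_{hom}$. By \cite{shenvial}, ${\rm CH}^2(F)$ admits a grading ${\rm CH}^2(F)={\rm CH}^2(F)_0\oplus {\rm CH}^2(F)_{hom}$ which is multiplicative with all divisor classes in the $0$-th piece, and for which multiplication by $l^2$ is injective on ${\rm CH}^2(F)_{hom}$ (this injectivity being a Chow-theoretic ``hard Lefschetz'' statement built into their Fourier transform). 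Therefore $D=0$ in ${\rm CH}^2(F)$, as wanted.

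The main obstacle is exactly this last injectivity of $\cdot\,l^2$ on ${\rm CH}^2(F)_{hom}$: on cohomology the operator is identically zero there, and for a general smooth projective variety the vanishing of $D\cdot l^2$ does not force a homologically trivial $D$ to vanish, so one truly needs the Fourier transform of Shen--Vial for $F$ --- itself resting on the geometry of incident lines and on the Lefschetz standard conjecture for $F$ --- whereas the rest of the argument is formal. I would also note, as a check, that when $W$ is very general the cohomological side is transparent: by Corollary \ref{coroiso} the class of a constant cycles surface is then a nonzero coisotropic class of degree $2n=4$, and by Theorem \ref{theoisoclass}(iv) (with $\rho(F)=1$ and ${\rm Hdg}^4(F,\mathbb{Q})=\langle l^2,c\rangle$) the space of such classes is one-dimensional, spanned for instance by the class $c_2(\mathcal{E})$ of the rational surface of lines in a $5$-nodal hyperplane section of $W$ from Proposition \ref{proramavoi}(a); so the content of the proposition in that case is precisely the vanishing of the homologically trivial relations supplied by the argument above.
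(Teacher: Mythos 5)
Your argument is correct and is essentially the paper's proof: both reduce to showing that a homologically trivial combination $D$ of constant cycles surfaces satisfies $D\cdot l^2=0$ in ${\rm CH}_0(F)$ (via Proposition \ref{procascubic}, i.e.\ Conjecture \ref{conjSiop} for $F$), and then recover $D$ from $D\cdot l^2$ using Shen--Vial. The only cosmetic difference is that where you invoke injectivity of $\cdot\,l^2$ on ${\rm CH}^2(F)_{hom}$ as a black-box ``hard Lefschetz'' property of the Fourier decomposition, the paper makes this explicit through the incidence-correspondence identity $I_*(g^2\sigma)=-6\sigma$ from the proof of \cite[Proposition 21.10]{shenvial}, which is exactly the left inverse you need.
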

\begin{proof} It follows from \cite[Proposition 21.10 and Section 20]{shenvial} that
the group ${\rm CH}^2(F)$ splits as
$${\rm CH}^2(F)={\rm CH}^2(F)_0\oplus {\rm CH}^2(F)_2,$$
where ${\rm CH}^2(F)_2= {\rm CH}^2(F)_{hom}$ is the image of the map
$$I_*:{\rm CH}_0(F)_{hom}\rightarrow {\rm CH}^2(F)_{hom}$$
induced by the codimension $2$ incidence correspondence $I\subset F\times F$.
More precisely, it is proved in  
\cite[proof of Proposition 21.10]{shenvial} that
\begin{eqnarray}
\label{eq12jancubic}
I_*(g^2\sigma)=-6\sigma\,\,{\rm in}\,\,{\rm CH}^2(F)
\end{eqnarray}
for $\sigma \in {\rm CH}^2(F)_{hom}$. 
Suppose now that $\sigma\in C_2(F)$ is a combination of constant cycles
surfaces which is homologous to $0$. Then 
$g^2\sigma=0$ in ${\rm CH}_0(F)$ because $F$ satisfies Conjecture \ref{conjSiop}.
It then follows from (\ref{eq12jancubic}) that $\sigma=0$.

\end{proof}
\subsection{The case of the LLSS $8$-folds}
Let again $W$ be a cubic $4$-fold. As mentioned in the previous section,
the variety  $F:=F_1(W)$ of lines in $W$ is a smooth hyper-K\"ahler fourfold.
It is a deformation of $S^{[2]}$ for some $K3$ surfaces with adequate polarization, but
for very general $W$, it has $\rho(F)=1$.
Much more recently, Lehn-Lehn-Sorger-van Straten proved in
\cite{LLS} that starting from the variety
$F_3(W)$ of cubic rational curves in $W$, one can construct a hyper-K\"ahler
$8$-fold $Z$, which has Picard number $1$ for very general $W$, and which has been  proved in \cite{LSSsuite} to be a deformation of a hyper-K\"ahler
manifold birationally equivalent to $S^{[4]}$.
The variety $Z$ is constructed by observing first  that each cubic rational curve
$C\subset W$
moves in a $2$-dimensional linear system in the cubic surface
$S_C=<C>\cap W$, where $<C>$ is the $\mathbb{P}^3$ generated by $C$.
Finally there is a boundary  divisor   which can be contracted in the base of this $\mathbb{P}^2$-fibration
 on $F_3(W)$, and this produces the variety $Z$. Thus there is a morphism
 $q:F_3(W)\rightarrow Z$ which is birationally a $\mathbb{P}^2$-bundle.

Let us prove the following result:
\begin{prop}\label{proZ} There is a degree $6$ dominant rational map
$$\psi: F\times F\dashrightarrow Z$$
such that
\begin{eqnarray}\label{eqpourformes}\psi^*\sigma_Z=pr_1^*\sigma_F-pr_2^*\sigma_F.
\end{eqnarray}
\end{prop}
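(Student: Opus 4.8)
The strategy is to exhibit the rational map $\psi$ explicitly from the geometry of cubic rational curves and lines on $W$, and then to compute the pullback of $\sigma_Z$ by identifying holomorphic $2$-forms on all the varieties in play with pieces of $H^{3,1}(W)$ (equivalently $H^4(W,\mathbb{Q})_{prim}$ via the Abel--Jacobi-type correspondences). Recall from \cite{LLS} that $Z$ is a birational $\mathbb{P}^2$-contraction of $F_3(W)$, the variety of cubic rational (twisted cubic) curves $C\subset W$; for generic such $C$, the cubic surface $S_C = \langle C\rangle\cap W$ is smooth and $C$ moves in a $2$-dimensional linear system $|C|$ on $S_C$, which is exactly the fiber of $q\colon F_3(W)\dashrightarrow Z$. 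So a point of $Z$ is essentially the data of the cubic surface $S_C$ together with the linear equivalence class, i.e. a ``generalized twisted cubic''. The first step is to set up the map: given a general pair of lines $(L_1,L_2)\in F\times F$, the two lines span a $\mathbb{P}^3 = \langle L_1,L_2\rangle$, which cuts $W$ in a cubic surface $S=\langle L_1, L_2\rangle\cap W$ containing $L_1\cup L_2$. The residual curve to $L_1\cup L_2$ in a hyperplane section of $S$ through them — or more directly, the class $h - L_1 - L_2$ twisted appropriately, where $h$ is the hyperplane class of $S\subset\mathbb{P}^3$ — should be (linearly equivalent to) a twisted cubic on $S$, hence a point of $Z$. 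Concretely one takes the linear system $|2h - L_1 - L_2|$ or a similar correction on the cubic surface $S$; one checks on a smooth cubic surface that this system is $2$-dimensional, base-point free in the relevant range, and of the right degree, so it defines a point of $Z$. This gives the rational map $\psi\colon F\times F\dashrightarrow Z$.

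\textbf{Degree and dominance.} Next I would compute the degree of $\psi$. A general point of $Z$ corresponds to a cubic surface $S_C$ with a chosen twisted-cubic class; the lines on a smooth cubic surface number $27$, and the fiber of $\psi$ over the point of $Z$ determined by $(S,[D])$ consists of ordered pairs $(L_1,L_2)$ of lines on $S$ with $L_1 + L_2 \equiv 2h - D$ (or whatever the precise normalization is). On a smooth cubic surface the number of such unordered pairs, hence ordered pairs, is a finite combinatorial count coming from the $27$ lines and the intersection form on $\mathrm{Pic}(S)$; the claim is that this count is $6$. (Equivalently, $\psi$ factors through a degree-$6$ correspondence; the $\mathbb{P}^3$ spanned by a general pair of lines is recovered as $\langle C\rangle$, and then one counts how many pairs of lines on $S_C$ give back the prescribed class.) The dominance of $\psi$ is then automatic once one checks the image is $8$-dimensional, e.g. by a dimension count ($\dim(F\times F)=8=\dim Z$) plus generic finiteness, which follows from the finiteness of the fiber count.

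\textbf{The form identity.} The heart is formula (\ref{eqpourformes}). Here I would use the standard fact (Beauville--Donagi \cite{bedo}) that $H^{2,0}(F)$ is canonically isomorphic to $H^{3,1}(W)$ via the incidence correspondence $P = \{(x,L): x\in L\}\subset W\times F$, i.e. $\sigma_F = p_{2*}p_1^*$ of a generator of $H^{3,1}(W)$; and analogously for the $8$-fold, $H^{2,0}(Z)\cong H^{3,1}(W)$ via the correspondence built from the incidence $\{(x,C): x\in C\}\subset W\times F_3(W)$ descended to $Z$ (this is in \cite{LLS}, \cite{LSSsuite}). Under these identifications, the statement $\psi^*\sigma_Z = pr_1^*\sigma_F - pr_2^*\sigma_F$ becomes a statement about composition of correspondences: the correspondence $W \leftrightarrow Z$ pulled back along $\psi$ to $F\times F$ must equal the difference of the two incidence correspondences $W\leftrightarrow F$ composed with the two projections. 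To prove this, I would trace a general point: a point $z = \psi(L_1,L_2)$ of $Z$ corresponds to a twisted cubic $C$ on $S = \langle L_1,L_2\rangle\cap W$ with $[C] = 2h - L_1 - L_2$ in $\mathrm{Pic}(S)$ (up to the normalization). The cycle-theoretic points of $W$ incident to $C$ versus those incident to $L_1$ and to $L_2$ are then related, on the cubic surface $S$, by the linear equivalence $C + L_1 + L_2 \equiv 2h$: integrating a $3$-form of $W$ against the corresponding $1$-cycles on $S$ (which is how one computes the $2$-form via the residue/Abel--Jacobi map) turns this linear equivalence into the additive relation $\sigma_Z|_z \leftrightarrow \sigma_F|_{L_1} - \sigma_F|_{L_2}$, the sign discrepancy between $+$ and $-$ being absorbed into the conventional signs of the Abel--Jacobi normalizations (one of the lines enters with opposite orientation in the relation $C \equiv 2h - L_1 - L_2$). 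Making this precise is where I expect the real work to be: one must pin down the normalization of the isomorphisms $H^{2,0}(F)\cong H^{3,1}(W)\cong H^{2,0}(Z)$, the precise definition of the linear system cutting out $\psi$, and check the relation $C+L_1+L_2 \equiv (\text{multiple of } h)$ holds on $S_C$ so that the Abel--Jacobi images add up correctly; the signs (hence the minus sign in (\ref{eqpourformes})) come out of the orientation conventions in this linear equivalence.

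\textbf{Main obstacle.} The principal difficulty is not the existence of $\psi$ or the degree computation — those are finite geometry on cubic surfaces — but rather establishing the exact form identity with the correct sign. This requires a careful bookkeeping of the Hodge-theoretic correspondences and their normalizations; a clean way to do it is to first prove the identity up to a nonzero scalar and a sign by the simplicity of the relevant Hodge structure (only a $1$-dimensional space of $(2,0)$-forms on each side, so $\psi^*\sigma_Z$ is forced to be $\lambda\, pr_1^*\sigma_F + \mu\, pr_2^*\sigma_F$ with $\lambda,\mu$ constants, by pulling back to $F\times\{pt\}$ and $\{pt\}\times F$), and then to determine $\lambda = 1$, $\mu = -1$ by evaluating on one explicit tangent vector, or by the symmetry of the construction under exchanging $L_1$ and $L_2$ (which must exchange $C$ with a curve differing by an automorphism of $S$, forcing $\lambda = -\mu$) together with a single normalization check fixing $|\lambda| = 1$.
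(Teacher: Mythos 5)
Your overall strategy is the paper's: span the $\mathbb{P}^3=\langle L_1,L_2\rangle$, intersect with $W$ to get a cubic surface $S$, pick out a $2$-dimensional system of twisted cubics on $S$ to define $\psi$, count the degree via the blow-down of $S$ to $\mathbb{P}^2$, and get (\ref{eqpourformes}) by comparing the correspondences from $H^{3,1}(W)$ (a Mumford-type argument). But there is a genuine gap at the one point where the statement's content lives: you never pin down the linear system, and the candidates you write down are wrong in a way that cannot be fixed by ``normalization.'' The classes $h-L_1-L_2$ and $2h-L_1-L_2$ have degree $1$ and $4$ respectively, so they are not twisted-cubic classes at all; the correct class is the \emph{asymmetric} one $h+L_1-L_2$ (the paper's $|\mathcal{O}_S(L-L')(1)|$), whose members are concretely the curves $L_1\cup_x C'$, where $x\in L_1$ and $C'$ is the conic residual to $L_2$ in the plane $\langle x,L_2\rangle$. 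This asymmetry is exactly what produces the minus sign: in $\mathrm{CH}_1(W)$ one has $L_1+C'=L_1+(\text{plane section})-L_2$, i.e.\ the parametrized cubic is rationally equivalent to $L_1-L_2$ up to a constant, and Mumford's theorem applied to the universal correspondences then gives $\psi^*\sigma_Z=pr_1^*\sigma_F-pr_2^*\sigma_F$ on the nose. Your relation $C+L_1+L_2\equiv 2h$ is symmetric in the two lines and would yield $\lambda=\mu$, not $\lambda=-\mu$; the claim that ``one of the lines enters with opposite orientation'' is not true for that relation, and the sign cannot be absorbed into Abel--Jacobi conventions.

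The same problem undermines your fallback argument: writing $\psi^*\sigma_Z=\lambda\,pr_1^*\sigma_F+\mu\,pr_2^*\sigma_F$ is fine (since $H^1(F)=0$), but if your construction were symmetric under swapping the factors, the swap would force $\lambda=\mu$ (contradicting the desired antisymmetry), while with the correct asymmetric system the swap sends $(S,h+L_1-L_2)$ to the different point $(S,h+L_2-L_1)$ of $Z$, so it does not commute with $\psi$ and gives you no relation between $\lambda$ and $\mu$ without further input. Likewise your degree count (``pairs with $L_1+L_2\equiv 2h-D$'') is tied to the wrong class; with the correct class the count is the paper's: the system $|D|$ blows $S$ down to $\mathbb{P}^2$ contracting six lines $E_i$, the reducible members are the transforms of lines through the six centers, and each $i$ gives the pair $(L_1,L_2)=(E_i,\,h-(\ell-E_i))$, whence degree $6$. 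So the skeleton of your argument is right, but the specific identification of the linear system — which is what forces the twisted-cubic degree, the number $6$, and the sign in (\ref{eqpourformes}) — is missing, and the placeholders you use in its stead would make the key steps fail.
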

Here $\sigma_Z$, resp. $\sigma_F$ denotes the holomorphic $2$-form
of $Z$, resp. $F$.
\begin{proof} Let $L,\,L'$ be two lines in $W$ and denote by
$l$, resp $l'$ the corresponding points in $F$. Assume $l,\,l'$ are general points
of $F$; then $L$ and $L'$ are in general position in $W$ and they
generate a
$\mathbb{P}^3_{L,L'}:=<L,L'>$. The surface $S_{L,L'}:=\mathbb{P}^3_{L,L'}\cap W$
is a smooth cubic surface containing both $L$ and $L'$ and
we claim that
the linear system
$|\mathcal{O}_S(L-L')(1)|$ is a $2$-dimensional linear system
of rational cubics on $S$. This can be verified by computing
its self-intersection and intersection with $K_{S_{L,L'}}$ but it is
even easier by observing that for any choice of
point $x\in L$, the plane $<x,L'>$ intersects $L$ into one point,
and intersects $W$ along the union of  the line $L'$ and a residual  conic $C'$. Thus
we get a member of this linear system which is the union of $L$ and
of $C'$ meeting in one point: this is a rational cubic curve. Note that for each
pair $(L,L')$ we get a $\mathbb{P}^1\cong L$ of such curves.

To compute the degree of the rational map $\psi $ so constructed, we
start from a cubic surface $S\subset W$ with a $2$-dimensional
linear system of rational cubic curves. This system provides a
birational map $\tau:S\rightarrow \mathbb{P}^2$ contracting $6$
exceptional curves, which are lines in $S$. The curves in this
linear system are the pull-backs of lines in $\mathbb{P}^2$, and
they become reducible when the line passes through one of the $6$
points blown-up by $\tau$. We thus get $6$ $\mathbb{P}^1$'s of such
curves which correspond to 6 possible choices of pairs $(L,L')$,
each one giving rise to a $\mathbb{P}^1\cong L$ of reducible curves
$L\cup_x C$.

Let us finally prove formula (\ref{eqpourformes}).
In fact, we observe that according to the constructions
 of \cite{bedo} and \cite{LLS}, the $2$-forms $\sigma_F$ and $p^*\sigma_Z$ are deduced from
the choice of a generator of the $1$-dimensional vector space
$H^{3,1}(W)$ by applying the correspondences $P\subset F\times W$,
$\mathcal{C}_3\subset F_3(X)\times W$ given by the universal families of curves.
Here $p:F_3(W)\rightarrow Z$ is the forgetting morphism whose description has been sketched above.

Next we observe that the rational map
$\psi$ has a lift
$${\psi}_1:P_1\dashrightarrow F_3(W)$$
where $p_1:P\times F=P_1\rightarrow F\times F$ is the pull-back by
the first projection $F\times F\rightarrow F$ of the universal
$\mathbb{P}^1$-bundle $P\rightarrow F$, and $\psi_1$ associates to a
general triple $(l,x,l')$ with $x\in L$, the cubic curve
$$L\cup_x C,$$
where $C\subset W$ is the residual conic of $L'$ contained in the plane $<x,L'>$.
The equality of forms stated in (\ref{eqpourformes}) is then a consequence of Mumford's theorem
\cite{mumford} and the fact that
  if we restrict the universal family $\mathcal{C}_3$ to the divisor
  $D$ in
  $F_3(W)$ parameterizing reducible rational curves $C_3=L\cup C$, where $C$ is a conic
  in $X$ with  residual
   line $L'$,
then for any
$(l,x,l')\in P\times F$, the curve
$C_3$ parametrized by $\psi_1(l,x,l')$
is rationally equivalent in $W$ to $L-L'$ up to a constant.
It  follows  that
we have the equality of forms pulled-back from
$W$ via the universal correspondences:
$$p_1^*(pr_1^*\sigma_F-pr_2^*\sigma_F)=\psi_1^*(p^*\sigma_Z)
\,\,{\rm in}\,\, H^0(P_1,\Omega_{P_1}^2).$$
This immediately implies (\ref{eqpourformes}).
\end{proof}
\begin{coro} \label{corollss} The LLSS varieties $Z$ satisfy conjecture \ref{conjmain}.
\end{coro}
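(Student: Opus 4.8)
The plan is to transport Conjecture \ref{conjmain}, which is already known for $F$, to $Z$ via the dominant rational map $\psi\colon F\times F\dashrightarrow Z$ of Proposition \ref{proZ}. Set $n=4$, so $\dim Z=2n=8$. Since $F$ is a deformation of $S^{[2]}$, it satisfies Conjecture \ref{conjmain} by Charles--Pacienza \cite{chapa}; concretely $S_0F=F$, $S_1F$ has dimension $3$ and contains the uniruled divisor of Proposition \ref{proramavoi}, and $S_2F$ has dimension $2$ and contains the rational surfaces of Proposition \ref{proramavoi} (see also Proposition \ref{procascubic}). On the other hand, by Theorem \ref{propcoiso} any closed algebraic subvariety of $S_iZ$ has codimension $\geq i$ in $Z$, so $\dim S_iZ\leq 8-i$ for all $i$. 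Hence it suffices, for each $i$ with $0\leq i\leq 4$, to exhibit a closed subvariety of $Z$ of dimension $8-i$ all of whose points have an orbit of dimension $\geq i$ under rational equivalence in $Z$.

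The main tool is a transport statement: if $(l,l')$ and $(l_1,l_1')$ lie in the domain of definition of $\psi$ and $l\equiv_{rat}l_1$, $l'\equiv_{rat}l_1'$ in $F$, then $\psi(l,l')\equiv_{rat}\psi(l_1,l_1')$ in $Z$. Indeed $\psi$ induces a push-forward $\psi_*\colon {\rm CH}_0(F\times F)\to {\rm CH}_0(Z)$ through its graph, with $\psi_*([(l,l')])=[\psi(l,l')]$ whenever $\psi$ is regular at $(l,l')$, and the class $[(l,l')]=[l]\times[l']\in {\rm CH}_0(F\times F)$ depends only on $[l],[l']\in {\rm CH}_0(F)$. (Note that the $2$-form identity in Proposition \ref{proZ} is not needed here, only that $\psi$ is a dominant rational map.)

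Given $i$, I would write $i=a+b$ with $0\leq a,b\leq 2$ (say $(a,b)=(0,0),(1,0),(1,1),(2,1),(2,2)$ for $i=0,\dots,4$), and pick irreducible components $A\subset S_aF$, $B\subset S_bF$ of dimensions $4-a$ and $4-b$ (with $A=F$ when $a=0$), which exist since $F$ satisfies Conjecture \ref{conjmain}. For a general $(l,l')\in A\times B$ there are constant cycle subvarieties $V\subset O_l^F$ and $V'\subset O_{l'}^F$ with $\dim V\geq a$, $\dim V'\geq b$ (by definition of $S_aF,S_bF$; when $a=2$, $V=O_l^F$ is a Lagrangian constant cycle surface by Corollary \ref{coroiso}). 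By the transport statement every point of $\psi(V\times V')$ is rationally equivalent in $Z$ to $\psi(l,l')$; since $O_{\psi(l,l')}^Z$ is a countable union of closed subvarieties and $\overline{\psi(V\times V')}$ is irreducible of dimension $\geq a+b=i$ (provided $\psi$ does not contract $V\times V'$), we get $\overline{\psi(V\times V')}\subset O_{\psi(l,l')}^Z$, hence $\psi(l,l')\in S_iZ$. Therefore $\overline{\psi(A\times B)}\subset S_iZ$, of dimension $8-i$ as soon as $\psi$ does not contract $A\times B$; with the upper bound this yields $\dim S_iZ=8-i$.

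The point requiring genuine work, and the expected main obstacle, is to justify these non‑contraction claims: that the products $A\times B$ and $V\times V'$ are neither contained in the indeterminacy locus of $\psi$ nor collapsed by it. I expect this to follow from the explicit description of $\psi$ in the proof of Proposition \ref{proZ}: over the dense open locus of pairs of skew lines spanning a $\mathbb{P}^3$ that cuts out a smooth (non-degenerate) cubic surface, $\psi$ is finite of degree $6$, and for $W$ general the chosen components $A,B$ of $S_aF,S_bF$, together with the constant cycle loci $V,V'$ through their general points, meet this open subset; in the low-dimensional cases one can also use that the indeterminacy locus of $\psi$ has codimension $\geq 2$ in $F\times F$. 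Making this precise from the geometry of \cite{LLS} is the one step that is not routine.
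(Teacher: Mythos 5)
Your overall strategy is exactly the paper's: transport constant-cycle data from $F$ to $Z$ through the dominant rational map $\psi$ of Proposition \ref{proZ}, using that rational equivalence is preserved under the graph correspondence of $\psi$ (your ``transport statement'' is correct and is indeed all that is needed from Proposition \ref{proZ} at this point), and get the upper bound $\dim S_iZ\leq 8-i$ from Theorem \ref{propcoiso}. The difference is that you leave open precisely the step you yourself single out as ``the one step that is not routine'': that $\psi$ does not contract $A\times B$, nor the constant-cycle loci $V\times V'$ through its general points. As written this is a genuine gap, and your proposed way of closing it is not adequate: generic finiteness of degree $6$ does not give finiteness of $\psi$ over the whole open locus of skew pairs with smooth associated cubic surface (a priori $\psi$ could contract subvarieties inside that locus), and the remark that the indeterminacy locus has codimension $\geq 2$ cannot by itself handle, e.g., the case $(a,b)=(1,1)$, where $A\times B$ is itself of codimension $2$.

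The missing idea, which is the one nontrivial observation in the paper's proof, is a canonical-bundle argument: since both $F\times F$ and $Z$ have trivial canonical bundle, the ramification divisor of a desingularization $\tilde\psi:\widetilde{F\times F}\rightarrow Z$ of $\psi$ must coincide with the exceptional divisor of $\widetilde{F\times F}\rightarrow F\times F$. Hence $\psi$ has invertible differential (maximal rank) at every point of its domain of definition, so no positive-dimensional subvariety passing through such a point can be collapsed, and in particular $\psi$ is quasi-finite where defined. With this in hand, the only thing left to verify is that $A\times B$ (and the constant-cycle loci through its general points) are not contained in the indeterminacy locus of $\psi$ --- a mild genericity check, e.g. that general pairs of points of the chosen subvarieties correspond to skew lines spanning a $\mathbb{P}^3$; this is also the only check the paper leaves to the reader. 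So your proof becomes complete once you replace your tentative ``finiteness over the open locus'' argument by this ramification/exceptional-divisor argument (or supply an honest geometric proof of non-contraction, which you have not done).
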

\begin{proof} Indeed, the variety $F$ satisfies conjecture \ref{conjmain}. This follows either from
\cite{chapa} which provides uniruled divisors and constant cycles Lagrangian
surfaces, or explicitly from Proposition \ref{proramavoi}.

Having algebraically coisotropic subvarieties $Z_1,\,Z_2$ in $F$ which are
of respective codimensions $i_1,\,i_2$ and fibered into $i_1$, resp. $i_2$-dimensional
constant cycles subvarieties of $F$, their product $Z_1\times Z_2$ is mapped by
$\psi$ onto a codimension $i_1+i_2$ subvariety of $Z$, which is fibered into
constant cycles subvarieties of dimension $i_1+i_2$.
One just has to check that $Z_1\times Z_2$ is not contracted by $\psi$, but
because both $F\times F$ and $Z$ have trivial canonical bundle,
the ramification locus of a desingularization $\tilde{\psi}:\widetilde{F\times F}\rightarrow
Z$ of $\psi$ is equal to the exceptional divisor of the birational map
$\widetilde{F\times F}\rightarrow F\times F$. So $\psi$ is of maximal rank where it is defined,
and one just has to check that $\psi$ is well defined at the general points
of the varieties $Z_1\times Z_2$ defined above.
\end{proof}
\begin{rema} {\rm There is a natural uniruled divisor in $Z$ that deserves a special study, namely
the branch locus of the desingularization
$\tilde{\psi}:\widetilde{F\times F}\rightarrow Z$ of $\psi$. This
branch locus $D$ is a non-empty divisor because $Z$ is simply
connected. It is the image of the ramification divisor of
$\tilde{\psi}$ which has to be equal to the exceptional divisor of
$\widetilde{F\times F}$ since both $F\times F$ and $Z$ have trivial
canonical divisor, and this is why $D$ is uniruled.}
\end{rema}

4 Place Jussieu, 75005 Paris, France

\smallskip
 claire.voisin@imj-prg.fr
    \end{document}